\newcommand\px{\mathrel{/\mkern-5mu/}}
\newtheorem{thm}{Theorem}[section]
\newtheorem{lem}{Lemma}[section]
\newtheorem{prop}{Proposition}[section]
\theoremstyle{definition}
\newtheorem{defn}{Definition}[section]
\theoremstyle{remark}
\numberwithin{equation}{section}
\newcommand{\Kcal}{\mathcal{K}}
\newcommand{\Scal}{\mathcal{S}}
\newcommand{\Dcal}{\mathcal{D}}
\newcommand{\Acal}{\mathcal{A}}
\newcommand{\Rcal}{\mathcal{R}}
\newcommand{\Mcal}{\mathcal{M}}
\newcommand{\Ncal}{\mathcal{N}}
\newcommand{\Hcal}{\mathcal{H}}
\newcommand{\Tcal}{\mathcal{T}}
\newcommand{\Pcal}{\mathcal{P}}
\newcommand{\Bcal}{\mathcal{B}}
\newcommand{\Ccal}{\mathcal{C}}
\newcommand{\Fcal}{\mathcal{F}}
\newcommand{\bu}{\mathbf{u}}
\newcommand{\bC}{\mathbf{C}}
\newcommand{\bv}{\mathbf{v}}
\newcommand{\bw}{\mathbf{w}}
\newcommand{\bff}{\mathbf{f}}
\newcommand{\bU}{\mathbf{U}}
\newcommand{\bd}{\mathbf{d}}
\newcommand{\bx}{\mathbf{x}}
\newcommand{\by}{\mathbf{y}}
\newcommand{\bg}{\mathbf{g}}
\newcommand{\bp}{\mathbf{p}}
\newcommand{\Bx}{\mathbf{x}}
\newcommand{\beq}{\begin{equation}}
\newcommand{\eeq}{\end{equation}}
\title[Stable determination in inverse elastic scattering and beyond]
{Stable determination of an elastic medium scatterer by a single far-field measurement and beyond}
\author{Zhengjian Bai}
\address{School of Mathematical Sciences, Xiamen University, Xiamen 361005, China}
\email{zjbai@xmu.edu.cn}
\author{Huaian Diao}
\address{School of Mathematics, Jilin University,  Changchun 130012, China}
\email{diao@jlu.edu.cn}
\author{Hongyu Liu}
\address{Department of Mathematics, City University of Hong Kong, Kowloon, Hong Kong, China}
\email{hongyu.liuip@gmail.com; hongyuliu@hkbu.edu.hk}
\author{Qingle Meng}
\address{School of Mathematical Sciences, Xiamen University, Xiamen 361005, China}
\email{mengql2021@foxmail.com}
\date{} 
\begin{document}
\maketitle

\begin{abstract}

We are concerned with the time-harmonic elastic scattering due to an inhomogeneous elastic material inclusion located inside a uniformly homogeneous isotropic medium. We establish a sharp stability estimate of logarithmic type in determining the support of the elastic scatterer, independent of its material content, by a single far-field measurement when the support is a convex polyhedral domain in $\mathbb{R}^n$, $n=2,3$. Our argument in establishing the stability result is localized around a corner of the medium scatterer. This enables us to further establish a byproduct result by proving that if a generic medium scatterer, not necessary to be a polyhedral shape, possesses a corner, then there exists a positive lower bound of the scattered far-field patterns. The latter result indicates that if an elastic material object possesses a corner on its support, then it scatters every incident wave stably and invisibility phenomenon does not occur.

\medskip

\noindent{\bf Keywords:}~~Inverse shape problem, elastic scattering, stability, single measurement, corner scattering, invisibility

\noindent{\bf 2020 Mathematics Subject Classification:}~~35Q60, 78A46, 35P25

\end{abstract}

\section{Introduction}\label{sec1}

\subsection{Mathematical setup}

We are mainly concerned with the time-harmonic elastic wave scattering due to the impingement of an incident field on an inhomogeneous isotropic medium scatterer as well as the associated inverse problem of determining the scatterer from the corresponding far-field measurement. We first introduce the mathematical formulation of our study.

Let $\Omega$ be a bounded Lipschitz domain in $\mathbb{R}^n$ with a connected complement $\mathbb{R}^n\backslash\overline{\Omega}$, $n=2,3$. In the physical setup, $\Omega$ is the support of an inhomogeneous elastic scatterer embedded in a uniformly homogeneous background space. The elastic medium parameters are characterised by $\rho$ and $\lambda, \mu$, which are respectively referred to as the density and the bulk moduli. It is assumed that $\lambda$ and $\mu$ are real constants satisfying the strong convexity conditions:
$$
\mu>0  \quad\mbox{ and }\quad n\,\lambda+2\mu>0.
$$
It is also assumed that $\rho\in L^\infty(\mathbb{R}^n)$ and ${\Omega=\mathrm{supp}(1-\rho)}$.  That is, by normalisation, we assume that the density of the homogeneous background space is $1$, whereas the scatterer is characterised by the inhomogeneous density $\rho$ in $\Omega$.

To introduce the Lam\'e system that describes the elastic scattering, we let $\omega\in\mathbb{R}_+$ signify the angular frequency of the time-harmonic elastic wave. Denote
$$
\kappa_{\mathrm s}=\omega\sqrt{1/\mu}  \mbox{ and } \kappa_{\mathrm p}=\omega\sqrt{1/(\lambda+2\mu)}
$$
by the shear and compressional wave numbers, respectively.  Let $\bu^i$ denote the time-harmonic plane incident wave of the following form:
\begin{align}\label{eq:incident wave}
\bu^i=\alpha_1\, \bd \,e^{\imath\kappa_{\mathrm p} \bx\cdot \bd} +\alpha_2\,\bd^\perp e^{\imath\kappa_{\mathrm s} \bx\cdot \bd}, \quad  \alpha_1, \alpha_2\in \mathbb{C},\quad |\alpha_1|+|\alpha_2|\neq 0,
\end{align}
where $\bd\in \mathbb{S}^{n-1}$ is called the incident direction, and $\bd^\perp$ is a unit vector orthogonal to $\bd$. Due to the interaction between the incident elastic wave $\bu^i$ and the elastic scatter $\Omega$, the scattered elastic wave $\bu^s \in \mathbb C^n$ is generated. This induces the total elastic wave $\bu \in \mathbb C^n$, which is the superposition of $\bu^i$ and $\bu^s$, namely $\bu:=\bu^i+\bu^s$, and satisfies the Navier equation
\begin{equation}\label{eq:navier equation}
\Delta^* \bu+\rho \,\omega^2 \bu={\bf 0},\quad \Delta^*\bu:=\mu\Delta\bu+(\lambda+\mu)\nabla(\nabla\cdot\bu),\quad\mbox{in}\ \mathbb{R}^n\backslash\overline{\Omega}.
\end{equation}
Clearly, $\bu^i$ is an entire solution to \eqref{eq:navier equation} with $\rho=1$. By the Helmholtz decomposition, any solution $\bu \in H_{loc}^2(\mathbb{R}^n)^n$ to \eqref{eq:navier equation} can be decomposed as follows:
\begin{equation}\label{eq:helmoltz decomposition1-1}
\bu=\bu_\mathrm p+\bu_\mathrm s,
\end{equation}
where $\bu_{\mathrm p}$ and $\bu_{\mathrm s}$ satisfy the equations:
 \begin{equation}\label{eq:helmoltz decomposition1-2}
 \begin{cases}
(\Delta+\kappa^2_\mathrm p)\bu_\mathrm p={\bf 0}, &\quad\nabla \times \bu_\mathrm p=\bf 0,\medskip\\
(\Delta+\kappa^2_\mathrm s)\bu_\mathrm s={\bf 0}, &\quad \nabla \,\cdot\, \bu_\mathrm s=0.
\end{cases}
\end{equation}
 The scattered wave $\bu^s$ satisfies the Kurpradze radiation condition
\begin{equation}\label{eq:rd1}
 \begin{cases}
\lim_{|\bx| \to\infty}|\bx|^{(n-1)/2}(  \frac{\partial{\bu_{\mathrm p}^s}}{\partial{|\bx|}}-\imath\kappa_p \bu_{\mathrm p}^s )=\bf 0,\medskip\\
\lim_{|\bx| \to\infty}|\bx|^{(n-1)/2}(  \frac{\partial{\bu_{\mathrm s}^s}}{\partial{|\bx|}}-\imath\kappa_s \bu_{\mathrm s}^s )=\bf 0,
\end{cases}
\end{equation}
 and admits the following asymptotic expansion (cf. \cite{Haher1998}):
\begin{align}\label{ineq:farfiel}
\bu^s(\bx)=\dfrac{\exp(\imath\kappa_{\mathrm p} |\bx|)}{|\bx|^{(n-1)/2}}\bU_{\mathrm p}(\hat {\bx})+\dfrac{\exp(\imath\kappa_{\mathrm s}|\bx|)}{|\bx|^{(n-1)/2}}\bU_{\mathrm s}(\hat {\bx})+\mathcal{O}(|\bx|^{-\frac{n+1}{2}})\quad \mbox{as}\quad |\bx|\to\infty,
\end{align}
which holds uniformly in all directions $\hat{\bx}\in \mathbb{S}^{n-1}$, where $\imath:=\sqrt{-1}$ is the imaginary unit. The vector fields $\bU_\mathrm p$ and $\bU_\mathrm s$ are referred to as the longitudinal and transversal far-field patterns, respectively, which respectively characterize the asymptotic behaviors of the normal part $\bu^s_{\mathrm p}$ and of the tangential part $\bu^s_{\mathrm s}$ of $\bu^s$. Let $\bU=(\bU_\mathrm p,\,\bU_\mathrm s)$ denote the far-field pattern of $\bu^s$, which is an element of the liner space $L^2(\mathbb{S}^{n-1})^n\times L^2(\mathbb{S}^{n-1})^n$ equipped with the following norm
  $$
  \|(\bw_1,\bw_2)\|^2_{L^2(\mathbb{S}^{n-1},\mathbb{C}^n\times \mathbb{C}^n)}=\|\bw_1\|^2_{L^2(\mathbb{S}^{n-1})^n}+\|\bw_2\|^2_{L^2(\mathbb{S}^{n-1})^n},
  $$
where $(\bw_1,\bw_2)\in L^2(\mathbb{S}^{n-1})^n\times L^2(\mathbb{S}^{n-1})^n $.

The inverse problem of our study can be introduced as determining the support of the inhomogeneous scattering object, namely $\Omega$, independent of its material content $(\lambda, \mu, \rho)$, by knowledge of the far-field pattern $\mathbf{U}(\hat{\bx})$. By introducing an operator $\mathscr{F}$ which is defined by the forward scattering system \eqref{eq:incident wave}--\eqref{ineq:farfiel} and sends the scatterer $(\Omega; \lambda,\mu,\rho)$ to the corresponding far-field pattern associated with an incident field $\mathbf{u}^i$, the inverse problem can be abstractly given as:
\begin{equation}\label{eq:ip1}
\mathscr{F}(\Omega)=\mathbf{U}(\hat{\bx}; \mathbf{u}^i),\quad \hat{\bx}\in\mathbb{S}^{n-1}.
\end{equation}
It is emphasized that in our study of \eqref{eq:ip1}, the material content of $\Omega$, namely $(\lambda,\mu, \rho)$, is not required to be known in advance though it belongs to a certain general a-priori class as shall be introduced in what follows. Moreover, we shall consider the case that only a fixed $\mathbf{u}^i$ is used, namely $\mathbf{u}^i$ is given in \eqref{eq:incident wave} with fixed $\eta_1,\eta_2,\mathbf{d}$ and $\mathbf{d}^\perp$. In such a case, $\mathbf{U}(\hat\bx; \mathbf{u}^i)$ is said to be {a} single far-field measurement. The inverse scattering problem with a single far-field measurement constitutes a longstanding problem in the literature. Finally, we remark that by direction verifications, the inverse problem \eqref{eq:ip1} is nonlinear and it is formally-determined with a single far-field measurement.

The second focus of our study is under what conditions, $\mathbf{U}\equiv\mathbf{0}$. This is another aspect of the inverse scattering problem \eqref{eq:ip1} and in such a case, the underlying object $(\Omega;\lambda,\mu,\rho)$ is invisible with respect to the far-field measurement.

\subsection{Statement of the main stability result and its implication to invisibility }\label{sec:main results}

First, we introduce the a-priori class of elastic scatterers for our study.


\begin{defn}\label{defin:admissible class}
Let $\Omega$ be a bounded Lipschitz domain in $\mathbb{R}^n$ with a connected complement $\mathbb{R}^n\backslash\overline{\Omega}$, $n=2,3$. In addition, $\Omega=\mathrm{supp}(1-\rho)\subset \overline{B_R}$, where $B_R$ signifies a central ball of radius $R$.
We say $(\Omega; \rho)\in\Kcal$ if the following conditions are satisfied:
\begin{itemize}
\item[{\rm (a)}] $\Omega\subset \mathbb{R}^{2}$ is a convex polygon and the opening angle at each vertex of $\Omega$ is in $(2\alpha_m,2\alpha_M)$, $\alpha_m>0$ and $\alpha_M<\pi/2$;
\item[{\rm (b)}] $\Omega\subset \mathbb{R}^{3}$ is a convex polyhedron;
\item[{\rm (c)}] The distances of any vertex of $\Omega$ to its non-adjacent edges are at least $l_0$, $0<l_0\leq 1$;
\item[{\rm (d)}] $\rho(\bx)$ is a uniformly $\theta$-H\"{o}lder continuous function in $\overline\Omega$, $0<\theta\leq1$. In addition, $|\rho(\bx_0)-1| \geq\epsilon_0>0$ at any vertex $\bx_0$ of $\Omega$.
\end{itemize}
\end{defn}
\begin{defn}\label{defin:admissible well-possed and non-vainishing} We say $\rho$ is called an admissible density function if for a time-harmonic plane incident wave $\bu^i$ of the form \eqref{eq:incident wave}, the forward scattering system \eqref{eq:incident wave}--\eqref{eq:rd1} admits a unique solution $\bu\in H_{loc}^2(\mathbb{R}^n)^n$ such that the scattered wave $\bu^s=\bu-\bu^i$ is the radiating solution and $\|\bu^s \|_{H^2(B_{2R})^n}\leq \Ncal$, where $\Ncal$ is an a-priori positive constant and $\bu(\bx)\neq 0$ in $S_\Omega\cup B_R\backslash \overline{\Omega}$, where $S_\Omega$ denotes the set of vertices of $\Omega$.

\end{defn}


In what follows, $(\Omega, \rho)$ is said to be an admissible polyhedral scatterer if it fulfils the conditions in Definitions~\ref{defin:admissible class} and \ref{defin:admissible well-possed and non-vainishing}. The parameters $\{R,\Ncal, l_0,\epsilon_0,\alpha_m,
 \alpha_M\}$ in the above admissibility definitions are referred to as the a-priori parameters. It is remarked that the admissibility requirements in Definitions~\ref{defin:admissible class} and \ref{defin:admissible well-possed and non-vainishing} will be needed in our stability study and can be fulfilled in certain general scenarios, which shall become clearer in our subsequent discussion.

 Next, we present the logarithmic stability estimate for the inverse problem \eqref{eq:ip1} on the shape determination of the elastic medium scatterer by a single far-field pattern.
Henceforth, we define the Hausdorff distance of two medium {scatterers} $(\Omega;\rho)$ and $(\Omega';\rho')$ as follows,
\begin{equation}\label{eq:Hausdorff distance}
d_{\Hcal}(\Omega,\Omega')= \max\left\{ \sup_{\bx\in \Omega} \mathrm {dist}(\bx,\, \Omega'),\,  \sup_{\bx\in \Omega'} \mathrm {dist}(\bx,\, \Omega)\right\}.
\end{equation}

\begin{thm}\label{th:main1}
Let $(\Omega; \rho)$ and $(\Omega'; \rho')$ be two admissible polyhedral scatterers. Let $\bu^i$ be a common time-harmonic plane incident wave of the form \eqref{eq:incident wave}. Assume that $\bU$ and $\bU'$ are the far-field patterns of the scattered waves $\bu^s$ and $\bu'^s$ by the medium scatterers $(\Omega;\rho)$ and $(\Omega';\rho')$, respectively. For sufficiently small $\varepsilon\in \mathbb{R}_{+}$, if
$$
\|\bU-\bU'\|_{L^2(\mathbb{S}^{n-1},\mathbb{C}^n\times \mathbb{C}^n)}\leq \varepsilon,
$$
then
$$
d_{\Hcal}(\Omega,\Omega')\leq  C(\ln\ln(\Ncal/\varepsilon))^{-\gamma},
$$
where $\Ncal$ is given in Definition \ref{defin:admissible well-possed and non-vainishing}, and $C$ and $\gamma$ are positive constants, depending only on the a-priori parameters involved in Definitions \ref{defin:admissible class} and \ref{defin:admissible well-possed and non-vainishing} as well as the Lam\'{e} constants $\lambda, \mu$.
\end{thm}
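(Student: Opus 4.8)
The plan is to argue by contraposition and to localise everything near a single corner, as suggested by the corner hypotheses in Definition~\ref{defin:admissible class}. Suppose $\|\bU-\bU'\|_{L^2}\le\varepsilon$ but, for contradiction, that $d_{\Hcal}(\Omega,\Omega')$ exceeds the asserted double-logarithmic bound. Since $\Omega,\Omega'$ are convex polyhedra, a Hausdorff distance of this size forces a vertex of one of them, say a vertex $\bx_c$ of $\Omega$, to lie outside $\overline{\Omega'}$ at distance comparable to $d_{\Hcal}(\Omega,\Omega')$; by condition (c) and the angle bounds (a)--(b) we may enclose $\bx_c$ in a ball $B_h(\bx_c)$, with $h$ controlled by $d_{\Hcal}(\Omega,\Omega')$ and the a-priori parameters, on which $\Omega'$ is absent. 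Thus $\rho'\equiv1$ and $\bu'$ is a homogeneous (real-analytic) solution of \eqref{eq:navier equation} on $B_h(\bx_c)$, whereas $\bu$ carries the genuine density jump $|\rho(\bx_c)-1|\ge\epsilon_0$ at $\bx_c$. The objective is to show that the corner prevents $\bu-\bu'$ from being too small on $B_h(\bx_c)$, which will be incompatible with the far-field smallness.

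I would realise this in three stages. \emph{Stage one: far field to exterior near field.} Since the incident wave is common, $\bu^s-\bu'^s=\bu-\bu'$ solves \eqref{eq:navier equation} with density $1$ on $B_{2R}\setminus\overline{\Omega\cup\Omega'}$. Applying the Helmholtz decomposition \eqref{eq:helmoltz decomposition1-1}--\eqref{eq:helmoltz decomposition1-2} to reduce the compressional and shear parts to scalar Helmholtz fields, and using the radiation condition \eqref{eq:rd1} together with the asymptotics \eqref{ineq:farfiel}, I would convert the $L^2$-closeness of $\bU,\bU'$ into closeness of $\bu^s-\bu'^s$ on the sphere $\partial B_{2R}$. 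As this recovery is analytic continuation it is severely ill-posed and yields only a \emph{single}-logarithmic modulus of continuity.

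\emph{Stage two: propagation to the corner.} I would propagate this boundary smallness inward to $B_h(\bx_c)$ by iterating a three-balls (quantitative unique continuation) inequality for the Lam\'e operator along a chain of overlapping balls contained in $B_{2R}\setminus\overline{\Omega\cup\Omega'}$, the a-priori bound $\|\bu^s\|_{H^2(B_{2R})}\le\Ncal$ furnishing the interpolation constants. Chaining the Hölder-type estimates over the required number of balls degrades the single logarithm into the second logarithm, giving $\|\bu-\bu'\|_{H^1(B_h(\bx_c))}\lesssim(\ln\ln(\Ncal/\varepsilon))^{-\gamma'}$. \emph{Stage three: the corner lower bound.} Near $\bx_c$ the difference solves \eqref{eq:navier equation} with a source $-\omega^2(\rho-1)\bu$ supported in $\Omega\cap B_h(\bx_c)$; I would test this against complex-geometric-optics solutions built from exponentials $e^{\mathbf{z}\cdot\bx}$ with $\mathbf{z}\cdot\mathbf{z}=0$ and $|\mathbf{z}|=s$ that decay inside the corner sector, using condition (d) to freeze $\rho$ at its corner value $\rho(\bx_c)$ up to a Hölder remainder and the convexity and opening-angle bounds to bound below the resulting sector integral, while the boundary contributions are controlled by the Stage-two smallness. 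This should produce a bound of the form $\epsilon_0\,|\bu(\bx_c)|\lesssim\|\bu-\bu'\|_{H^1(B_h(\bx_c))}+(\text{terms decaying in }s\text{ and powers of }h)$; optimising in $s$ turns this into a strictly positive lower bound for the left-hand side in terms of a power of $h$. Combining with Stage two, $h$—and therefore $d_{\Hcal}(\Omega,\Omega')$—must obey the claimed double-logarithmic bound, which is the desired contradiction.

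The hardest part will be Stage three. Unlike the scalar Helmholtz setting, constructing admissible CGO solutions for the vectorial Lam\'e system that decay in the prescribed corner sector is delicate, and one must extract from the resulting integral identity a clean lower bound in which both the non-vanishing total field $\bu(\bx_c)\neq0$ (guaranteed by Definition~\ref{defin:admissible well-possed and non-vainishing}) and the contrast $\epsilon_0$ appear explicitly. Keeping the dependence on $s$ and on the exposed corner size $h$ sharp is precisely what fixes the exponent $\gamma$, and coupling it correctly with the two logarithmic losses of Stages one and two is the delicate bookkeeping that yields the stated $(\ln\ln(\Ncal/\varepsilon))^{-\gamma}$ rate.
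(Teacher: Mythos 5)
Your overall strategy coincides with the paper's: an Isakov/Rondi--Sini type estimate from the far field to an exterior annulus, then chaining three-spheres inequalities (applied componentwise to the Helmholtz fields $\bu_{\mathrm p},\bu_{\mathrm s}$ from the decomposition \eqref{eq:helmoltz decomposition1-1}--\eqref{eq:helmoltz decomposition1-2}) to propagate smallness toward the scatterer, which is where the second logarithm appears, and finally a corner integral identity tested against CGO-type exponentials with the sector integral bounded below by a power of the large parameter and an optimisation of that parameter against the exposed corner size $h$. This is exactly the architecture of Sections \ref{Auxiliary results}--\ref{sec:proTh1} (Propositions \ref{pro:estimate on boundary}, \ref{prop:cgo} and \ref{prop:important estimate}).

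Two steps, as you state them, would fail and need repair. First, Stage two cannot yield $\|\bu-\bu'\|_{H^1(B_h(\bx_c))}\lesssim(\ln\ln(\Ncal/\varepsilon))^{-\gamma'}$: your chain of balls is confined to $B_{2R}\setminus\overline{\Omega\cup\Omega'}$, and inside $\Omega\cap B_h(\bx_c)$ the difference $\bu-\bu'$ solves an inhomogeneous equation with source $-\omega^2(\rho-1)\bu$, so unique continuation cannot cross into that region; indeed, if the bulk $H^1$ norm over $B_h(\bx_c)$ were small uniformly in $h$, your Stage-three inequality would give $\epsilon_0|\bu(\bx_c)|\lesssim\delta(\varepsilon)$ for every $h$, which cannot be right. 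What is actually needed, and what the paper proves in Proposition \ref{pro:estimate on boundary}, is smallness of $|\bu-\bu'|+|\nabla\bu-\nabla\bu'|$ on $\partial Q$, with $Q$ the convex hull of $\Omega$ and $\Omega'$; Betti's formula is then applied on $D_h=B_h(\bx_0)\cap Q$ so that the boundary integral splits into a piece on $\partial\Bcal\cap B_h(\bx_0)\subset\partial Q$ (small, with the area factor $h^{n-1}$ supplying the $h$-dependence) and a spherical piece killed by the exponential decay of the CGO. This in turn requires the convex-hull lemmas (Lemmas \ref{lem:convex hull_2D} and \ref{lem:convex hull 3D}) ensuring the extremal vertex is also a vertex of $Q$ with opening angle still below $\pi$, a point your sketch omits. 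Second, the ansatz $e^{\mathbf{z}\cdot\bx}$ with $\mathbf{z}\cdot\mathbf{z}=0$ produces a harmonic exponential, not a Navier solution; the paper takes $\bu_0=e^{\xi\cdot(\bx-\bx_0)}\eta$ with $\xi\cdot\xi=-\kappa_{\mathrm s}^2$ and a divergence-free polarization $\eta$ as in \eqref{eq:cgo}--\eqref{eq:cgo1-2}, and a separate nontrivial argument (Proposition \ref{prop:cgo}, the case analysis on $\bp,\bp^\perp$) is needed to guarantee $|\bu(\bx_0)\cdot\eta|\geq C_0>0$, without which the sector lower bound is useless. With these corrections your outline matches the paper's proof.
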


The argument in proving Theorem~\ref{th:main1} is localized around a corner of the underlying polyhedral scatterer. As an interesting byproduct, we can establish another stability result where the medium scatterer is not necessarily polyhedral as long as it possesses a corner. The full technical details of the result will be given in Theorems~\ref{th:main2} and \ref{th:main2'}, and we only provide a rough summary in the following theorem.

\begin{thm}\label{thm:main2summary}
Consider the scattering problem \eqref{eq:incident wave}--\eqref{eq:rd1} associated with a general medium scatterer $(\Omega,\rho)$, which is not necessarily polyhedral. Suppose that $\partial\Omega$ possesses a corner as described in Theorem~\ref{th:main2'}. Then it holds that
\begin{equation}\label{eq:scattering bound1}
\|\bU\|_{L^2(\mathbb{S}^{n-1},\mathbb{C}^n\times \mathbb{C}^n)}\geq C,
\end{equation}
where $C$ is a positive constant depending on a certain set of a-priori parameters.
\end{thm}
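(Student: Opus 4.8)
The plan is to reduce the global lower bound to a quantitative analysis localized at a single corner, and to show that the presence of a corner across which the density jumps forces the scattered field to be non-negligible near that corner, which in turn forces the far-field pattern to be bounded below. Write $\varepsilon=\|\bU\|_{L^2(\mathbb{S}^{n-1},\mathbb{C}^n\times\mathbb{C}^n)}$; the aim is to produce a bound $\varepsilon\geq C$ with $C$ depending only on the a-priori parameters. First I would fix a corner $\bx_c$ of $\partial\Omega$ as in Theorem~\ref{th:main2'}, translate it to the origin, and choose $h>0$ (depending on the corner geometry and the edge-distance constant $l_0$) so small that $\Omega\cap B_h$ coincides with the convex sector (in $\mathbb{R}^2$) or circular cone (in $\mathbb{R}^3$) spanned by the tangent of $\partial\Omega$ at $\bx_c$. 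On $B_h$ the total field obeys the transmission problem $\Delta^*\bu+\rho\omega^2\bu=\mathbf{0}$ in $\Omega\cap B_h$ and $\Delta^*\bu+\omega^2\bu=\mathbf{0}$ in $B_h\setminus\overline\Omega$, with the displacement and the traction (conormal derivative) $T\bu$ continuous across $\partial\Omega$.

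The central device is a family of complex-geometric-optics (CGO) solutions of the free Navier equation adapted to the corner. For a complex direction $\mathbf{z}=\tau(\bm{\xi}+\imath\bm{\xi}^\perp)$ with $\bm{\xi}\cdot\bm{\xi}^\perp=0$, the ansatz $\bv_\tau=\mathbf{a}\,e^{\mathbf{z}\cdot\bx}$ solves $\Delta^*\bv_\tau+\omega^2\bv_\tau=\mathbf{0}$ provided either $\mathbf{z}\cdot\mathbf{z}=-\kappa_{\mathrm p}^2$ with $\mathbf{a}\parallel\mathbf{z}$, or $\mathbf{z}\cdot\mathbf{z}=-\kappa_{\mathrm s}^2$ with $\mathbf{z}\cdot\mathbf{a}=0$; these correspond to the compressional and transversal modes in \eqref{eq:helmoltz decomposition1-1}--\eqref{eq:helmoltz decomposition1-2}. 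Since $\Omega\cap B_h$ is convex with opening less than $\pi$, one can orient $\bm{\xi}$ so that $\bm{\xi}\cdot\bx\leq0$ throughout the closed sector, whence $|\bv_\tau|$ decays like $e^{\tau\bm{\xi}\cdot\bx}$ and the large parameter $\tau$ concentrates $\bv_\tau$ at the corner. Pairing $\bu$ with $\bv_\tau$ over $\Omega\cap B_h$ through Betti's second identity and using the two Navier equations gives the master relation
\begin{equation}\label{eq:plan-master}
\omega^2\!\int_{\Omega\cap B_h}\!(1-\rho)\,\bu\cdot\bv_\tau\,d\bx=\int_{\partial(\Omega\cap B_h)}\!\bigl(\bv_\tau\cdot T\bu-\bu\cdot T\bv_\tau\bigr)\,ds.
\end{equation}
On the part of the boundary lying on $\partial\Omega$ I would insert the transmission conditions and split the exterior trace as $\bu=\bu^i+\bu^s$; the contribution of the entire solution $\bu^i$ is removed by a second application of Betti's identity to the pair $(\bu^i,\bv_\tau)$ (both solve the free equation), leaving only an exponentially small cap term and the genuinely unknown $\bu^s$-part.

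It then remains to extract the dominant behaviour as $\tau$ grows. On the left-hand side of \eqref{eq:plan-master}, the non-degeneracy conditions $|\rho(\bx_c)-1|\geq\epsilon_0$ and $\bu(\bx_c)\neq\mathbf{0}$ of Definitions~\ref{defin:admissible class}(d) and \ref{defin:admissible well-possed and non-vainishing}, together with the uniform $\theta$-H\"older continuity of $\rho$ and the $H^2\hookrightarrow C^0$ regularity of $\bu$, show that the volume integral equals $(1-\rho(\bx_c))\,\bu(\bx_c)\cdot\mathbf{a}\int_{\Omega\cap B_h}e^{\mathbf{z}\cdot\bx}\,d\bx$ up to a remainder of strictly smaller order, and a Laplace-type estimate bounds its modulus below by $c\,\epsilon_0\,|\bu(\bx_c)|\,\tau^{-n}$, the constant $c$ depending on the opening angle through $\alpha_m,\alpha_M$. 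On the right-hand side, the integrals over the artificial cap $\partial B_h\cap\Omega$ are $O(e^{-c\tau h})$ by the decay of $\bv_\tau$, while the surviving $\bu^s$-term is bounded, since $T$ is first order and $|\bv_\tau|\leq1$ on the sector sides, by $C\tau\,\|\bu^s\|_{H^1(\partial\Omega\cap B_h)}$. Fixing $\tau$ large but finite, determined only by the a-priori parameters, so that the H\"older and exponential remainders are each at most half the leading term, \eqref{eq:plan-master} collapses to $c\,\epsilon_0\,|\bu(\bx_c)|\,\tau^{-n}\leq C\tau\,\|\bu^s\|_{H^1(\partial\Omega\cap B_h)}$, that is, $\|\bu^s\|_{H^1(\partial\Omega\cap B_h)}\geq c_\ast$ for a positive a-priori constant $c_\ast$. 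Finally, invoking the one-to-one correspondence between the radiating field and its far-field pattern, the a-priori bound $\|\bu^s\|_{H^2(B_{2R})^n}\leq\Ncal$, and quantitative unique continuation (propagation of smallness) from the exterior up to $\partial\Omega$, the near-corner norm of $\bu^s$ is controlled by a modulus $\Psi(\varepsilon)$ with $\Psi(\varepsilon)\to0$ as $\varepsilon\to0$; combining this with $\|\bu^s\|_{H^1(\partial\Omega\cap B_h)}\geq c_\ast$ yields $\Psi(\varepsilon)\geq c_\ast$, hence the desired lower bound $\varepsilon\geq C$ in \eqref{eq:scattering bound1}.

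The main obstacle, where the elastic problem genuinely departs from the scalar Helmholtz corner-scattering theory, is the non-degeneracy of the leading term on the left of \eqref{eq:plan-master}. Because the admissibility hypotheses guarantee only that $\bu(\bx_c)\neq\mathbf{0}$ and not its direction, a single fixed amplitude $\mathbf{a}$ may be orthogonal to $\bu(\bx_c)$ and annihilate the dominant contribution; I would circumvent this by working with both the compressional and transversal CGO families and, if necessary, a finite frame of amplitude vectors, arguing that at least one pairing $\bu(\bx_c)\cdot\mathbf{a}$ is bounded away from zero. Coupled with this is the bookkeeping of powers of $\tau$: the traction $T\bv_\tau$ grows like $\tau$, so the final inequality is viable only because the genuine corner term survives at order $\tau^{-n}$ once the entire-wave and cap contributions have been removed, which is delicate in $n=3$ where the cone geometry and the edge-distance condition~(c) interact with the Laplace asymptotics. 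I expect this interplay---ensuring the corner contribution is simultaneously non-degenerate and of the correct order relative to all remainders---rather than any single estimate, to be the crux of the argument.
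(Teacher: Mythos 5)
Your proposal follows essentially the same route as the paper: the paper proves this result by taking $\Omega'=\emptyset$, $\rho'\equiv 1$ in the machinery of Theorem~\ref{th:main1}, i.e.\ the Betti integral identity \eqref{eq:intergal identity} paired with the CGO solution \eqref{eq:cgo}, the $\tau^{-n}$ lower bound and the non-degeneracy of $\bu(\bx_c)\cdot\eta$ from Proposition~\ref{prop:cgo} (which resolves, by the same polarization-selection argument you sketch, the orthogonality worry you raise), exponentially small cap terms, and the propagation of smallness of Propositions~\ref{pro:far-field to near field}--\ref{pro:estimate on boundary}. The only organizational difference is that you fix $\tau$ at a large a-priori value before invoking propagation of smallness, whereas the paper keeps all terms together and optimizes $\tau=\tau_e(\varepsilon)$ as in \eqref{eq:tau_0}; both yield the same double-exponential constant.
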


Theorem~\ref{thm:main2summary} indicates that for a general medium scatterer, if it possesses a corner, then it generically scatters every incident field stably, i.e. invisibility phenomenon cannot occur.

\subsection{Connection to existing studies and discussion}

Determining the shape of an inhomogeneous object by minimal/optimal scattering measurements has been a longstanding problem in the literature with a long and colorful history; see \cite{CKreview,Liu21JIIP,LZ07} for reviews and surveys. Recently, several qualitative uniqueness results in determining convex polyhedral medium scatterers were established in \cite{LT20IP} for electrostatics, \cite{CX,HuandSalo2016,DCL21,CDL,BL20IP} for acoustic scattering, \cite{LX17,Blasten2019} for electromagnetic scattering and \cite{DLS} for elastic scattering. In \cite{BLSIMA21,LTY}, the shape determination of medium scatterers whose boundaries possess high-curvature parts was also considered for electrostatics and acoustic scattering. In \cite{Blasten2020,LT21}, quantitative stability estimates of double-logarithmic type were established in determining the convex polyhedral shape of an acoustic medium scatterer.

Another issue of significant physical interest is the occurrence of invisibility, namely the scattering pattern is identically zero. Generically, it is believed that geometric singularities on the support of a generic medium scatterer prevents the occurrence of the invisibility phenomenon. We refer to \cite{B,BLin,Blasten2014,Blasten2020,BLSIMA21,Blasten2019,CM,CX,DCL21,Liu21JIIP,LX17,Paivarinta2017,SS,VX} for related studies on this intriguing topic in different physical contexts. In \cite{BLSIMA21}, a geometrically singular point (say, e.g. a corner point) on the boundary of a shape is treated as with infinite curvature, and it is further shown that a medium scatterer whose smooth shape possesses a sufficiently high curvature point can also prevent the occurrence of invisibility. All of the aforementioned results are qualitative and in \cite{Blasten2020} sharp estimates were established by showing that there exist positive lower bounds of the scattering patterns, which quantify the non-invisibility phenomena due to the presence of the shape singularities. It is noted that the lower scattering bounds have been derived only for the acoustic scattering.

Our study in this article extends the related studies in \cite{Blasten2020,LT20IP,LT21} for the electrostatics and acoustic scattering to the elastic scattering which possesses more complicated and technical nature, both in physics and mathematics. It is pointed out that as remarked in \cite{Blasten2020}, the double-logarithmic stability estimates are generically optimal for inverse scattering problems. Finally, we would like to mention in passing a closely related topic on the geometric structures of transmission eigenfunctions which is beyond the scope of this article \cite{BL17,DCL21,DLS,CDHLW1,CDLS,DJLZ,DLWY,LT1,DLWW}.

The rest of the paper is organized as follows. In Section \ref{Auxiliary results}, we derive a critical auxiliary result about the propagation of smallness from far field to the boundary of the scatterer. Section \ref{sub:auxiliary3} is devoted to a micro-local analysis of the scattering solution around a corner. The full details of the proof of Theorem \ref{th:main1} is presented in Section \ref{sec:proTh1}. In Section~\ref{sec4}, we present the full technical details of Theorem~\ref{thm:main2summary}.

\section{Propagation of smallness from far-field to boundary}\label{Auxiliary results}
The main goal of this section is to show how the smallness from the far-field pattern propagates to the boundary of the elastic medium scatterer, which is a key ingredient in the stability proof of Theorem \ref{th:main1}. Our argument follows the general strategy developed in \cite{Blasten2020} for the acoustic scattering. Throughout the rest of the paper, we let $\bu$ and $\bu'$, respectively, denote the total wave fields corresponding to the scatterers $(\Omega; \rho)$ and $(\Omega'; \rho')$ in Theorem~\ref{th:main1}.

\subsection{Stability Estimates: from Far-field to Near-field}\label{sub:auxiliary1}
In this subsection, our aim is to estimate the difference of $\bu$ and $\bu'$ in $B_{2R}\backslash B_R$ close to the convex hull of $\Omega$ and $\Omega'$.
To begin with, we need to generalise Theorem 4.1 in \cite{Rondi2015} and Proposition 5.2 in \cite{Blasten2020} to the elastic inhomogeneous medium scatterer case.
\begin{lem}\label{lem:Isakov}
Fix $t\in (1,2]$ and $\Tcal>0$. Let $\bu\in H^2_{loc}(\mathbb{R}^n)^n$ be a solution to the Navier equation \eqref{eq:navier equation} in $\mathbb{R}^n\backslash \overline{B_R}$ with $\rho=1$, which satisfies the Kupradze radiation condition. Let $\bU$ be the far field pattern of $\bu$ with the norm $\varepsilon=\|\bU\|_{L^2(\mathbb{S}^{n-1},\mathbb{C}^n\times \mathbb{C}^n)}$. Assume that $\|\bu_\mathrm \alpha\|_{L^2(B_{2R}\backslash B_R)^n}\leq\Tcal, \mathrm\alpha =\mathrm p,\mathrm s$, where $\bu_\mathrm{p}$ and $\bu_\mathrm{s}$ are the longitudinal and the transversal parts of $\bu$, respectively. Then there exists a constant $C=C(\kappa_\mathrm p,\kappa_\mathrm s,\Tcal, n, R, t)>0$ such that the following stability estimate holds for $\varepsilon\leq\frac{\Tcal}{C}$:
\begin{equation}\label{eq:Isakov}
\|\bu\|_{L^2(B_{2tR}\backslash B_{tR})^n}\leq Ce^{-\hat{c}\sqrt{\ln\frac{\Tcal}{\varepsilon}}},
\end{equation}
where $\kappa=\min\{\kappa_\mathrm p,\kappa_\mathrm s\}$ and $\hat{c}\leq\ln t\sqrt{\frac{e\kappa R}{2}}$.
\end{lem}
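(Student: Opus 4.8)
The plan is to reduce the vectorial estimate to a pair of scalar ones via the Helmholtz decomposition \eqref{eq:helmoltz decomposition1-1}--\eqref{eq:helmoltz decomposition1-2}, and then to prove each scalar estimate by a multipole expansion together with a low/high mode splitting, in the spirit of the arguments in \cite{Rondi2015,Blasten2020}. Since $\bu$ is a radiating solution of \eqref{eq:navier equation} with $\rho=1$ in $\mathbb R^n\setminus\overline{B_R}$, each of $\bu_{\mathrm p}$ and $\bu_{\mathrm s}$ is, componentwise, a radiating solution of the scalar Helmholtz equation of wavenumber $\kappa_{\mathrm p}$, respectively $\kappa_{\mathrm s}$, in the exterior region, and by \eqref{ineq:farfiel} the far-field of $\bu_\alpha$ is exactly $\bU_\alpha$, $\alpha=\mathrm p,\mathrm s$. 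In particular $\|\bU_\alpha\|_{L^2(\mathbb S^{n-1})^n}\le\varepsilon$ and, by hypothesis, $\|\bu_\alpha\|_{L^2(B_{2R}\setminus B_R)^n}\le\Tcal$. Hence it suffices to establish the scalar statement: if $v$ is a radiating Helmholtz solution of wavenumber $\kappa_\alpha$ in $\mathbb R^n\setminus\overline{B_R}$ with far-field $L^2$-norm at most $\varepsilon$ and $\|v\|_{L^2(B_{2R}\setminus B_R)}\le\Tcal$, then $\|v\|_{L^2(B_{2tR}\setminus B_{tR})}\le Ce^{-\hat c\sqrt{\ln(\Tcal/\varepsilon)}}$. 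Applying this to the $n$ scalar components of $\bu_{\mathrm p}$ and of $\bu_{\mathrm s}$ and summing by the triangle inequality then yields \eqref{eq:Isakov}, where the dominant (largest) bound comes from the smaller wavenumber, whence the appearance of $\kappa=\min\{\kappa_{\mathrm p},\kappa_{\mathrm s}\}$.

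To prove the scalar estimate I would expand $v$ in outgoing wave functions, $v=\sum_{l,q}a_{lq}h_l^{(1)}(\kappa_\alpha|\bx|)Y_l^q(\hat\bx)$ when $n=3$, with the analogous expansion in $H_m^{(1)}$ and $e^{\imath m\varphi}$ when $n=2$. By orthonormality of the angular functions, $\|v_\infty\|_{L^2(\mathbb S^{n-1})}^2=\kappa_\alpha^{-2}\sum_{l,q}|a_{lq}|^2$, while for any annulus one has $\|v\|_{L^2(B_{r_2}\setminus B_{r_1})}^2=\sum_{l,q}|a_{lq}|^2\,w_l(r_1,r_2)$ with $w_l(r_1,r_2)=\int_{r_1}^{r_2}|h_l^{(1)}(\kappa_\alpha r)|^2 r^{n-1}\,\mathrm dr$. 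Writing $w_l:=w_l(R,2R)$ and $\tilde w_l:=w_l(tR,2tR)$, the three quantities to compare are $\sum|a_{lq}|^2\le\kappa_\alpha^2\varepsilon^2$ (far-field smallness), $\sum|a_{lq}|^2 w_l\le\Tcal^2$ (the a-priori near-field bound), and the target $\sum|a_{lq}|^2\tilde w_l$. The key structural facts are the Stirling-type asymptotics $|h_l^{(1)}(z)|\sim\sqrt2\,(2l/(ez))^l/z$ for $l\gg z$, which make $\tilde w_l$ grow super-exponentially in $l$, and the geometric decay $\tilde w_l/w_l\lesssim t^{\,n-2}t^{-2l}$, which reflects that the target annulus sits farther out ($tR>R$) where $h_l^{(1)}$ is smaller.

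With these in hand I would split at a cutoff $N$. For the low modes $l\le N$ I use the far-field bound, $\sum_{l\le N}|a_{lq}|^2\tilde w_l\le\tilde w_N\,\kappa_\alpha^2\varepsilon^2$, and for the high modes $l>N$ the a-priori bound, $\sum_{l>N}|a_{lq}|^2\tilde w_l\le\big(\sup_{l>N}\tilde w_l/w_l\big)\Tcal^2\lesssim t^{-2N}\Tcal^2$, so that
\begin{equation*}
\|v\|_{L^2(B_{2tR}\setminus B_{tR})}^2\lesssim\Big(\tfrac{2N}{e\kappa_\alpha tR}\Big)^{2N}\varepsilon^2+t^{-2N}\Tcal^2.
\end{equation*}
Choosing $N\asymp\sqrt{\ln(\Tcal/\varepsilon)}$ renders the first term negligible relative to the second, since its logarithm tends to $-\infty$ like $-2\ln(\Tcal/\varepsilon)$, while the second becomes $\lesssim e^{-2\hat c\sqrt{\ln(\Tcal/\varepsilon)}}\Tcal^2$; tracking the admissible range of the proportionality constant in $N$ then produces the claimed value $\hat c\le\ln t\sqrt{e\kappa R/2}$. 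Taking square roots and recombining the components completes the argument.

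The main obstacle I anticipate is the \emph{uniform-in-$l$} control of the Hankel functions that underlies both the super-exponential growth of $\tilde w_l$ and the geometric decay of the ratio $\tilde w_l/w_l$: one needs two-sided bounds on $|h_l^{(1)}(\kappa_\alpha r)|$ sharp enough, and valid simultaneously for $n=2,3$ and all $r\in[R,2R]\cup[tR,2tR]$, to make the splitting and the optimization quantitative with the explicit constant $\hat c$. A secondary and more routine point is to verify that the Helmholtz decomposition genuinely identifies the component far-fields with $\bU_{\mathrm p}$ and $\bU_{\mathrm s}$ and transmits the near-field bound $\Tcal$ to each scalar component, so that the reduction to the scalar estimate is fully justified.
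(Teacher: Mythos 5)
Your reduction to the scalar Helmholtz estimate via the Helmholtz decomposition, applied componentwise to $\bu_{\mathrm p}$ and $\bu_{\mathrm s}$ and recombined by the triangle inequality, is exactly the route the paper takes; the only difference is that the paper simply invokes the scalar result as \cite[Proposition 5.2]{Blasten2020} rather than re-deriving it, and your multipole/low--high mode splitting sketch is precisely the standard proof of that cited proposition. The argument is correct and essentially identical in approach.
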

\begin{proof}
Let $u^j$ and $u^j_\mathrm \alpha$ denote the $j$-th components of $\bu$ and $\bu_\mathrm \alpha$, respectively,  where \\$\mathrm \alpha=\mathrm p,\mathrm s$. From \eqref{eq:helmoltz decomposition1-1} and \eqref{eq:helmoltz decomposition1-2}, we obtain that $u^j_\mathrm \alpha$ satisfies the Helmoltz equation
$$
(\Delta+\kappa^2_\mathrm \alpha)u^j_\mathrm \alpha= 0,\quad j=1,2,\ldots,n.
$$
Then by  \cite[Proposition 5.2]{Blasten2020}, we know that if $\varepsilon\leq\frac{\Tcal}{C_{\mathrm \alpha}}$, then
\begin{equation*}
\|u^j_\mathrm \alpha\|_{L^2(B_{2tR}\backslash B_{tR})}\leq C_{\mathrm \alpha}\Tcal e^{-\hat{c}\sqrt{\ln\frac{\Tcal}{\varepsilon}}},
\end{equation*}
where $C_{\mathrm \alpha}=C_{\mathrm \alpha}(\kappa_{\mathrm \alpha},n,R,t)>0$, $\mathrm \alpha=\mathrm p,\mathrm s$, $\hat{c}\leq\ln t\sqrt{\frac{e\kappa R}{2}}$ and  $\kappa=\min\{\kappa_\mathrm p,\kappa_\mathrm s\}$.

When $\varepsilon\leq\frac{\Tcal}{\sqrt{2n\,(C^2_\mathrm p+C^2_\mathrm s)}}$, we can derive
\begin{align*}
\|\bu\|_{L^2(B_{2tR}\backslash B_{tR})^n}&=\sqrt{\sum_{j=1}^n\|u^j\|^2_{L^2(B_{2tR}\backslash B_{tR})}}\leq \sqrt{\sum_{\mathrm \alpha=\mathrm p,\mathrm s}\sum_{j=1}^n\|u^j_\mathrm \alpha\|^2_{L^2(B_{2tR}\backslash B_{tR})}}\\
&\leq \sqrt{2n\,(C^2_\mathrm p+C^2_\mathrm s)}\,e^{-\hat{c}\sqrt{\ln\frac{\Tcal}{\varepsilon}}}.
\end{align*}

The proof is complete.
\end{proof}

With the help of elliptic interior regularity, we can derive the following stability estimate of the near-fields in $ B_{2R}\backslash\overline{ B_R}$.

\begin{prop}\label{pro:far-field to near field}
Fix $t\in (1,2]$ and $\Tcal>0$. Assume that $\bu\in H^2_{loc}(\mathbb{R}^n)^n$ satisfies  the Navier equation $(\Delta ^*+\omega^2)\bu=\bf 0$ in $\mathbb{R}^n\backslash \overline{B_R}$ and the Kupradze radiation condition. Let $\bU$ be the far field pattern of $\bu$ with the norm $\varepsilon=\|\bU\|_{L^2(\mathbb{S}^{n-1},\mathbb{C}^n\times \mathbb{C}^n)}$. Assume that $\|\bu_\mathrm \alpha\|_{L^2(B_{2R}\backslash B_R)^n}\leq\Tcal, \mathrm\alpha =\mathrm p,\mathrm s$, where $\bu_\mathrm{p}$ and $\bu_\mathrm{s}$ are the longitudinal and the transversal parts of $\bu$, respectively. Let $K \subset B_{2R}\backslash \overline{B_{tR}}$ be a domain. Then there exists a positive constant $C=C(\kappa_\mathrm p,\kappa_\mathrm s,n, R, t)$ such that the following stability estimate holds for $\varepsilon\leq \frac{\Tcal}{C}$:
\begin{align}\label{pro:near-field estimation}
\|\bu\|_{H^r(K)^n}\leq  C \Tcal e^{-\hat{c}\sqrt{\ln\frac{\Tcal}{\varepsilon}}},
\end{align}
where $r\in \mathbb{N}$ is a smoothness index, $\kappa=\min\{\kappa_\mathrm p,\kappa_\mathrm s\}$ and $\hat{c}\leq\ln t\sqrt{\frac{e\kappa R}{2}}$.
\end{prop}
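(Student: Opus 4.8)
The plan is to upgrade the $L^2$ decay estimate in Lemma~\ref{lem:Isakov} to the $H^r$ decay estimate claimed in \eqref{pro:near-field estimation} by invoking interior elliptic regularity for the Navier operator. First I would observe that the hypotheses of the present proposition are precisely those of Lemma~\ref{lem:Isakov}: $\bu$ solves $\Delta^*\bu+\omega^2\bu=\mathbf{0}$ (equivalently the Navier equation with $\rho=1$) in $\mathbb{R}^n\backslash\overline{B_R}$, satisfies the Kupradze radiation condition, has far-field norm $\varepsilon$, and obeys the bound $\|\bu_\alpha\|_{L^2(B_{2R}\backslash B_R)^n}\leq\Tcal$. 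Hence Lemma~\ref{lem:Isakov} already gives, for $\varepsilon\leq\Tcal/C$,
\begin{equation*}
\|\bu\|_{L^2(B_{2tR}\backslash B_{tR})^n}\leq C e^{-\hat{c}\sqrt{\ln\frac{\Tcal}{\varepsilon}}}.
\end{equation*}
The point is to convert this interior $L^2$ smallness on the annulus $B_{2tR}\backslash B_{tR}$ into $H^r$ smallness on the compactly contained set $K\subset B_{2R}\backslash\overline{B_{tR}}$.

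The mechanism is a Caccioppoli-type interior regularity inequality. Since $K$ is compactly contained in the open annulus $B_{2R}\backslash\overline{B_{tR}}$ and $\bu$ is a (smooth, in fact real-analytic) solution of the constant-coefficient elliptic Navier system there, I would choose an intermediate annular neighborhood $A$ with $K\Subset A\Subset B_{2tR}\backslash B_{tR}$ and apply the standard interior estimate for elliptic systems (the Lam\'e operator $\Delta^*+\omega^2$ is elliptic under the strong convexity conditions $\mu>0$, $n\lambda+2\mu>0$). This yields, for each smoothness index $r\in\mathbb{N}$, a constant $C=C(\kappa_\mathrm p,\kappa_\mathrm s,n,R,t)$ with
\begin{equation*}
\|\bu\|_{H^r(K)^n}\leq C\|\bu\|_{L^2(A)^n}.
\end{equation*}
Because the forcing term vanishes (the equation is homogeneous on this annulus), no inhomogeneity enters the right-hand side; only the interior $L^2$ norm survives. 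Combining this with the Lemma~\ref{lem:Isakov} estimate, and absorbing the factor $\Tcal$ to match the stated form, gives \eqref{pro:near-field estimation} with the same exponent $\hat{c}\leq\ln t\sqrt{e\kappa R/2}$.

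The main obstacle, such as it is, is bookkeeping rather than analysis: one must verify that the geometric configuration of the sets is consistent, namely that a cushioning annulus $A$ can genuinely be inserted between $K$ and the annulus $B_{2tR}\backslash B_{tR}$ on which Lemma~\ref{lem:Isakov} controls $\bu$, and that the interior regularity constant depends only on the allowed a-priori parameters and on the separation distance $\operatorname{dist}(K,\partial(B_{2R}\backslash\overline{B_{tR}}))$, which is itself controlled by $R$ and $t$. One subtle point worth a line of justification is that the interior estimate is being applied to the full vector field $\bu$ for the Navier system directly, rather than componentwise through the Helmholtz factorization \eqref{eq:helmoltz decomposition1-1}--\eqref{eq:helmoltz decomposition1-2}; either route works, but applying it to the elliptic system $\Delta^*\bu+\omega^2\bu=\mathbf{0}$ is cleanest since it avoids re-deriving $L^2$ bounds on the individual Helmholtz components $\bu_\mathrm p,\bu_\mathrm s$ on the inner annulus. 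Once the geometry is fixed, the estimate is immediate, so I expect the proof to be short.
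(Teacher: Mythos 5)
Your proposal is correct and follows essentially the same route as the paper: invoke Lemma~\ref{lem:Isakov} for the $L^2$ decay on an annulus containing $K$, then upgrade to $H^r$ smallness on $K$ by interior elliptic regularity for the homogeneous equation. The only cosmetic difference is that the paper performs the regularity step componentwise on the Helmholtz pieces $\bu_\mathrm p,\bu_\mathrm s$ via an explicit cutoff bootstrap over nested domains $K_j$ (gaining one derivative per step), whereas you apply the interior estimate to the elliptic Navier system directly; both routes rest on the same implicit requirement, which you correctly flag, that $K$ be compactly contained in $B_{2R}\backslash\overline{B_{tR}}$ so that a cushioning neighborhood can be inserted.
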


\begin{proof}
 Note that $\bu $ is the superposition of $\bu_{\mathrm p}$ and $\bu_{\mathrm s}$,
which are vector-valued weak solutions to the Helmholtz equation with wave numbers $\kappa_{\mathrm p}$ and $\kappa_{\mathrm s}$, respectively.
We use $u^j_\mathrm \alpha$ to denote the $j$-th component of $\bu_\mathrm \alpha$, where $\mathrm \alpha=\mathrm p,\mathrm s$.
Given two domains $\tilde{K}$ and $K'$ with $\tilde{K}\subset K'\subset B_{2R}\backslash \overline{B_{tR}}$ and $\mathrm {dist}(\partial\tilde{K},\partial K')>0$. For any $s\in \mathbb{R}$ and the smooth cutoff function $\tilde{\phi}\in C^{\infty}_0(K)$ with $\tilde{\phi}\equiv1$ in $\tilde{K}$,
it is not difficult to prove the two subsequent  properties:
\begin{equation*}
\|u^j_\mathrm \alpha\|_{H^{s+2}(\mathbb{ R}^n)}=\|(1+\kappa^2_\mathrm \alpha)u^j_{\mathrm \alpha}-(\Delta +\kappa^2_\mathrm \alpha)u^j_{\mathrm \alpha}\|_{H^{s}(\mathbb{ R}^n)}
\end{equation*}
and
\begin{equation*}
\|(\Delta +\kappa^2_\mathrm \alpha)(\tilde{\phi} \,u^j_{\mathrm \alpha})\|_{H^{s+2}(\mathbb{ R}^n)}=\|2\,\nabla\tilde{\phi}\cdot \nabla u^j_\mathrm\alpha+u^j_\mathrm \alpha\Delta\phi\|_{H^{s}(\mathbb{ R}^n)}\leq C_\mathrm\alpha\|u^j_\mathrm \alpha\|_{H^{s+1}(\Omega)}.
\end{equation*}
Thus we obtain
\begin{align*}
\|\bu\|^2_{H^{s+2}(\tilde{K})^n}&=\|\bu_\mathrm p+\bu_\mathrm s\|^2_{H^{s+2}(\tilde{K})^n}\leq\sum_{\mathrm \alpha=\mathrm p,\mathrm s}\sum^n_{j=1}\| u^j_\mathrm \alpha\|^2_{H^{s+2}(\tilde{K})}\\
&\leq 2\sum_{\mathrm \alpha=\mathrm p,\mathrm s}\sum^n_{j=1}\|\tilde{\phi}\, u^j_\mathrm \alpha\|^2_{H^{s+2}(\mathbb{R}^n)}\leq \sum_{\mathrm \alpha=\mathrm p,\mathrm s}\sum^n_{j=1}C^2_{\kappa_{\mathrm \alpha},\tilde{\phi}}\,\| u^j_\mathrm \alpha\|^2_{H^{s}(K')}\\
&\leq C_{\kappa_{\mathrm p},\kappa_{\mathrm s},n,\tilde{\phi}}^2\,\|\bu\|^2_{H^{s+1}(K')^n},
\end{align*}
which implies that 
\begin{equation}\label{eq:sobolve }
\|\bu\|_{H^{s+2}(\tilde{K})^n}\leq C_{\kappa_{\mathrm p},\kappa_{\mathrm p},n,\tilde{\phi}}\|\bu\|_{H^{s+1}(K')^n}.
\end{equation}
By fixing  $r\in\mathbb{N}$, there exists a subdomain sequence $\{ K_j,\phi_j\}^r_{j=0}$ such that
\begin{equation*}
\begin{cases}
&K_j\subset K_{j-1}, \,\, \mathrm{dist}(\partial K_j,\partial K_{j-1})>0,\\
&\phi_j\in C_0^\infty(K_{j-1}), \quad \phi_j\equiv 1 \quad \mathrm {in}\,\, K_j,\\
& K_0=B_{2R}\backslash \overline{B_{tR}},\quad\, K_r=K.
\end{cases}
\end{equation*}
By using \eqref{eq:sobolve } and Lemma \ref{lem:Isakov} repeatedly, we have
\begin{align*}
\|\bu\|_{H^{r}(K)^n}&\leq C_{\kappa _\mathrm p,\kappa_\mathrm s, \phi_r}\|\bu\|_{H^{r-1}(K)^n}\leq\cdots\leq C_{\kappa _\mathrm p,\kappa_\mathrm s, \phi_r,\ldots,\phi_1}\|\bu\|_{L^2(K)^n}\\
&\leq  C_{\kappa _\mathrm p,\kappa_\mathrm s, \phi_r,\ldots,\phi_1}\Tcal e^{-\hat{c}\sqrt{\ln(\Tcal/\varepsilon)}}.
\end{align*}

The proof is complete.
\end{proof}
\subsection{Stability Estimates: from Near-field to Boundary}\label{sub:auxiliary2}
In this subsection, we establish the propagation of smallness from the near-field to the boundary of $Q$ which is the convex hull of $\Omega$ and $\Omega'$. We are mainly interested in estimating the sum of $|\bu-\bu'|$ and $|\nabla\bu-\nabla\bu'|$ on $\partial Q$. To begin with, we introduce the following {\it three-spheres inequalities} which may be found, for instance, in \cite{Brummelhuis1995}.

\begin{lem}\label{lem:three-sphere inequality}\cite{Rondi2008}
There exist positive constants $\hat{R}$, $C$ and $c$, $0<c<1$, depending on $\kappa$ only. Let $0<r_1<r_2<r_3< \hat{R}$, and $u$ be a solution to $\Delta u+\kappa^2 u=0 $ in $B_{r_3}$. For any $s\in(r_2,r_3)$,  we have
$$
\|u\|_{L^\infty (B_{r_2})}\leq C\big(1-\frac{r_2}{s}\big)^{-3/2}\|u\|^{1-\beta}_{L^\infty (B_{r_3})}\|u\|^\beta_{L^\infty (B_{r_1})},
$$
where $\beta$ satisfies the following inequality,
\begin{equation*}
\frac{c\log\frac{r_3}{s}}{\log\frac{r_3}{r_1}}\leq \beta \leq 1-\frac{c\log\frac{s}{r_1}}{\log\frac{r_3}{r_1}}.
\end{equation*}
\end{lem}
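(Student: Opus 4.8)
The plan is to reduce the stated $L^\infty$ interpolation inequality to a three-spheres inequality for the solid $L^2$ means $\|u\|_{L^2(B_r)}$, to prove the latter via the Almgren--Garofalo--Lin frequency function, and finally to upgrade back from $L^2$ to $L^\infty$ by interior elliptic estimates. The two endpoint conversions are the routine parts. On the right-hand side one has the trivial bound $\int_{B_r}|u|^2\le|B_r|\,\|u\|_{L^\infty(B_r)}^2$. On the left one uses that $|u|^2$ is a subsolution of $\Delta+2\kappa^2$ (since $\Delta|u|^2=2|\nabla u|^2-2\kappa^2|u|^2\ge-2\kappa^2|u|^2$), so De Giorgi--Nash--Moser local boundedness gives $\|u\|_{L^\infty(B_{r_2})}^2\le C\delta^{-n}\|u\|_{L^2(B_{r_2+\delta})}^2$ with $C$ depending only on $\kappa^2\delta^2$; taking $B_{r_2+\delta}=B_s$, so $\delta=s-r_2=s(1-r_2/s)$, records the enlargement cost as the factor $(1-r_2/s)^{-n/2}$, which is exactly the exponent $3/2$ of the statement in the relevant dimension $n=3$.

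For the core estimate I would introduce the height and energy functionals
\[
H(r)=\int_{\partial B_r}|u|^2\,dS,\qquad D(r)=\int_{B_r}\bigl(|\nabla u|^2-\kappa^2|u|^2\bigr)\,dx=\mathrm{Re}\!\int_{\partial B_r}\bar u\,\partial_\nu u\,dS,
\]
together with the frequency $N(r)=rD(r)/H(r)$ (complex-valued $u$ being handled by taking real parts in the energy identities). Differentiating $H$ and $D$ and invoking a Rellich--Pohozaev identity for $\Delta u+\kappa^2u=0$, the potential $\kappa^2u$ enters only as a lower-order perturbation, so that a modified frequency of the form $e^{C\kappa^2r^2}\bigl(N(r)+C\kappa^2r^2\bigr)$ is monotone nondecreasing for $0<r<\hat R$. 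This is precisely where the threshold $\hat R=\hat R(\kappa)$ (of order $1/\kappa$) enters: the radius must be small enough that the zeroth-order term cannot overwhelm the gradient energy. Almost-monotonicity of $N$ is equivalent to almost-log-convexity of $r\mapsto\log\|u\|_{L^2(B_r)}$ in the variable $\log r$ (equivalently, to a doubling inequality $\int_{B_{2r}}|u|^2\le C\int_{B_r}|u|^2$), which yields
\[
\|u\|_{L^2(B_{r_2})}\le C\,\|u\|_{L^2(B_{r_1})}^{\beta_0}\,\|u\|_{L^2(B_{r_3})}^{1-\beta_0},\qquad \beta_0=\frac{\log(r_3/r_2)}{\log(r_3/r_1)},
\]
for all $0<r_1<r_2<r_3<\hat R$.

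Chaining this $L^2$ three-spheres inequality with the two endpoint upgrades gives
\[
\|u\|_{L^\infty(B_{r_2})}\le C\bigl(1-\tfrac{r_2}{s}\bigr)^{-3/2}\,\|u\|_{L^\infty(B_{r_3})}^{1-\beta}\,\|u\|_{L^\infty(B_{r_1})}^{\beta},
\]
where the auxiliary radius $s\in(r_2,r_3)$ is the outer radius used in the interior estimate on $B_{r_2}$; running the interpolation with $s$ in place of $r_2$ replaces $\beta_0$ by an exponent comparable to $\frac{\log(r_3/s)}{\log(r_3/r_1)}$, and absorbing the comparison constant into a factor $c\in(0,1)$ produces the two-sided bound $\frac{c\log(r_3/s)}{\log(r_3/r_1)}\le\beta\le1-\frac{c\log(s/r_1)}{\log(r_3/r_1)}$ of the lemma. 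Every constant generated above ($\hat R$, the Rellich correction $C$, and the subsolution constant $2\kappa^2$) depends only on $\kappa$ and on the dimension, which gives the asserted $\kappa$-only dependence.

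The main obstacle is the frequency-function monotonicity in the presence of the potential $\kappa^2$: one must carry the Rellich--Pohozaev correction carefully so that the differential inequality for $N$ closes, and track how that correction forces the scale restriction $r<\hat R(\kappa)$ and contributes the constant $C$. The remaining steps --- the two $L^2/L^\infty$ conversions and the interpolation of a log-convex function --- are standard, though keeping the power on $(1-r_2/s)$ sharp requires the quantitative (rather than soft) interior bound noted above. As an alternative, the same inequality follows from a Carleman estimate for $\Delta+\kappa^2$ with a radial logarithmic weight applied to $u$ times a cutoff and optimized over the Carleman parameter; that route makes the role of $\hat R$ equally transparent but is heavier in bookkeeping.
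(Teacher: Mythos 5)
The paper does not prove this lemma at all: it is imported verbatim from the literature (cited to Rondi's 2008 paper, which in turn traces it to Brummelhuis's three-spheres theorem for second-order elliptic equations), so there is no in-paper argument to compare yours against. Judged on its own terms, your route --- reduce to an $L^2$ three-spheres inequality, prove that by almost-monotonicity of a Garofalo--Lin type frequency function modified to absorb the zeroth-order term $\kappa^2 u$, then convert back to $L^\infty$ by the subsolution property of $|u|^2$ and De Giorgi--Nash--Moser --- is a legitimate and essentially standard way to obtain exactly this statement, and it correctly identifies where $\hat R(\kappa)$ and the loss $c<1$ in the exponent range come from. Two points deserve care if you were to write it out. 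First, the monotonicity formula naturally controls the surface height $H(r)=\int_{\partial B_r}|u|^2$ and gives log-convexity of $\log\bigl(H(r)/r^{n-1}\bigr)$ in $\log r$; passing to solid means $\|u\|_{L^2(B_r)}$ requires an integration step that you elide. Second, your chain produces one specific exponent $\beta$ comparable to $\log(r_3/s)/\log(r_3/r_1)$; since $\log(r_3/s)+\log(s/r_1)=\log(r_3/r_1)$, such a $\beta$ does satisfy both inequalities of the stated range with the same constant $c$, but that verification should be made explicit rather than asserted. Also note the factor $(1-r_2/s)^{-3/2}$ is an upper bound valid in both $n=2$ and $n=3$ (the interior estimate gives exponent $n/2$, and the base is less than one), so your remark that $3/2$ is ``exact'' only in $n=3$ is harmless but worth phrasing as an inequality. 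None of these is a genuine gap; they are the expected bookkeeping in a proof the paper chose to outsource.
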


From now on, we fix $r\in \mathbb{R}_+$ and set $r_1=r$, $r_2=2r$, $r_3=4r$, $s=2\sqrt{2}r$.

\begin{lem}\label{lem:curve three-sphere inequality}
Let $V\subset \mathbb{R}^n$ be a bounded connected domain and $\Gamma\subset V$ be a rectifiable curve with endpoints $\bx$ and $\bx'$. Fix positive constants $\Tcal$ and $\hat{R}$ such that $4r<\hat{R}$ and $\Tcal \geq 1$. Let $B(\Gamma, 4r)=\bigcup_{\by\in \Gamma}B_{4r}(\by)\subset V$. Moreover, we assume that $\bu\in L^\infty(V)^n$ satisfies
\begin{equation*}
\begin{cases}
&(\Delta^*+\omega^2)\bu={\bf 0},\quad \|\bu_\mathrm \alpha\|_{L^\infty(V)^n}\leq \Tcal,\, \mathrm \alpha=\mathrm p,\mathrm s,\\
&\max\big\{\|\bu_\mathrm p\|_{L^\infty(B_r(\bx))^n},\|\bu_\mathrm s\|_{L^\infty(B_r(\bx))^n}\big\}\leq 1,
\end{cases}
\end{equation*}
where $\bu_\mathrm{p}$ and $\bu_\mathrm{s}$ represent the longitudinal and the transversal parts of $\bu$, respectively. Then the following result holds
\begin{align*}
\|\bu_\mathrm \alpha\|_{L^\infty(B_r(\bx'))^n}\leq C\Tcal \|\bu_\mathrm \alpha\|^{\beta^{\frac{d_{\Gamma}}{r}+1}}_{L^\infty(B_r(\by))^n},\quad \mathrm \alpha=\mathrm p,\mathrm s,
\end{align*}
where $d_\Gamma$ is the distance measured along $\Gamma$.
\end{lem}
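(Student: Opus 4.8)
The plan is to propagate the smallness hypothesis along the curve $\Gamma$ by iterating the three-spheres inequality of Lemma~\ref{lem:three-sphere inequality}, applied componentwise to the scalar parts $u^j_\mathrm{\alpha}$ of $\bu_\mathrm{\alpha}$, and then to reassemble the vector estimate. The key observation is that each component $u^j_\mathrm{\alpha}$ solves the Helmholtz equation $(\Delta+\kappa_\mathrm{\alpha}^2)u^j_\mathrm{\alpha}=0$, so Lemma~\ref{lem:three-sphere inequality} applies to it on balls contained in $B(\Gamma,4r)\subset V$. With the fixed radii $r_1=r$, $r_2=2r$, $r_3=4r$, $s=2\sqrt{2}r$, the exponent $\beta$ becomes a fixed constant in $(0,1)$ depending only on $\kappa$, and the geometric prefactor $(1-r_2/s)^{-3/2}=(1-1/\sqrt{2})^{-3/2}$ is an absolute constant.

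\emph{First step: set up a chain of overlapping balls along $\Gamma$.} Since $d_\Gamma$ is the arclength distance along $\Gamma$ from $\bx$ to $\bx'$, I would choose centers $\by_0=\bx, \by_1,\dots,\by_N=\bx'$ on $\Gamma$, spaced a distance $r$ apart along $\Gamma$, so that $N\leq d_\Gamma/r+1$. Consecutive centers satisfy $|\by_{k+1}-\by_k|\leq r$, which guarantees the nesting needed to apply the three-spheres inequality at each link: centering on $\by_{k+1}$, the small ball $B_{r_1}(\by_{k+1})=B_r(\by_{k+1})$ contains (after the standard re-centering trick) the information from $B_r(\by_k)$, while the large ball $B_{r_3}(\by_{k+1})=B_{4r}(\by_{k+1})\subset B(\Gamma,4r)\subset V$ stays inside the region where $\|\bu_\mathrm{\alpha}\|_{L^\infty}\leq\Tcal$. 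The hypothesis $4r<\hat{R}$ is exactly what is needed so that Lemma~\ref{lem:three-sphere inequality} is applicable on $B_{4r}$.

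\emph{Second step: iterate.} Writing $m_k=\|u^j_\mathrm{\alpha}\|_{L^\infty(B_r(\by_k))}$ and using $\|u^j_\mathrm{\alpha}\|_{L^\infty(B_{4r}(\by_k))}\leq\Tcal$, one link of the three-spheres inequality gives, schematically,
\begin{equation*}
m_{k+1}\leq C\,\Tcal^{1-\beta}\,m_k^{\beta},
\end{equation*}
with $C$ absorbing the fixed geometric prefactor. Setting $M_k=m_k/\Tcal$ and using $\Tcal\geq 1$, this linearizes after taking logarithms to a relation of the form $\log M_{k+1}\leq \beta\log M_k+\log C$; iterating $N$ times and using $m_0\leq 1\leq\Tcal$ (so $M_0\leq 1$) yields $M_N\leq C'\,M_0^{\beta^{N}}\leq C'\,M_0^{\beta^{d_\Gamma/r+1}}$, i.e.
\begin{equation*}
\|u^j_\mathrm{\alpha}\|_{L^\infty(B_r(\bx'))}\leq C\,\Tcal\,\|u^j_\mathrm{\alpha}\|_{L^\infty(B_r(\bx))}^{\beta^{d_\Gamma/r+1}}.
\end{equation*}
The bookkeeping of how the additive constants $\log C$ accumulate under the geometric contraction by $\beta<1$ is the routine but slightly delicate part: because $\beta<1$ the constants sum to a convergent geometric series, so the final $C$ is controlled independently of $N$. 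Summing over the $n$ components $j$ (and absorbing $\sqrt{n}$ into $C$) and identifying $B_r(\bx)$ with the ball $B_r(\by)$ appearing in the statement then gives the vector estimate $\|\bu_\mathrm{\alpha}\|_{L^\infty(B_r(\bx'))^n}\leq C\Tcal\|\bu_\mathrm{\alpha}\|_{L^\infty(B_r(\by))^n}^{\beta^{d_\Gamma/r+1}}$ for each $\mathrm{\alpha}=\mathrm{p},\mathrm{s}$.

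The main obstacle I anticipate is the careful re-centering at each link together with the accounting of the constant $C$ through the iteration. One must verify that the intermediate radii stay ordered ($r_1<r_2<s<r_3$) after recentering on $\by_{k+1}$ and that the small ball at step $k+1$ genuinely dominates the quantity controlled at step $k$; choosing the spacing to be $r$ (half of $r_2$) is what makes the nested-ball geometry work uniformly along $\Gamma$. The other subtlety is ensuring that $\beta$ can be taken as a single constant independent of $k$: this follows because all radii are fixed multiples of $r$, so the two-sided bound on $\beta$ in Lemma~\ref{lem:three-sphere inequality} collapses to a fixed value depending only on $\kappa$. Everything else—passing from the scalar Helmholtz components to the Helmholtz decomposition of $\bu$, and from a single ball to the $L^\infty$ bound over $B_r(\bx')^n$—is a straightforward consequence of the decomposition \eqref{eq:helmoltz decomposition1-1}–\eqref{eq:helmoltz decomposition1-2}.
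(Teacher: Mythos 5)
Your proposal is correct and follows essentially the same route as the paper: a chain of centres on $\Gamma$ spaced $r$ apart in arclength (so $N\leq d_\Gamma/r+1$ and $B_r(\bx_{k})\subset B_{2r}(\bx_{k-1})$), componentwise application of the scalar three-spheres inequality of Lemma~\ref{lem:three-sphere inequality} with the fixed radii $r,2r,4r$, and the bound $1+\beta+\cdots+\beta^{N-1}\leq 1/(1-\beta)$ together with $\Tcal\geq 1$ to keep the accumulated constants and powers of $\Tcal$ uniform in $N$. The only cosmetic difference is that the paper centres each application of the three-spheres inequality at $\bx_{k-1}$ and uses the inclusion $B_r(\bx_k)\subset B_{2r}(\bx_{k-1})$ rather than re-centering at the next point, but the resulting one-link estimate $m_{k}\leq C\,\Tcal^{1-\beta}m_{k-1}^{\beta}$ is identical to yours.
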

\begin{proof}
Let $N=\lceil d_\Gamma/r\rceil$. There exists a sequence of balls, each of radius $r$ and centred respectively at $\bx=\bx_1,\bx_2,\cdots,\bx_{N+1}=\bx'$, such that  $\bx_k\in\Gamma$ and $|\bx_{k+1}-\bx_k|\leq d_\Gamma(\bx_{k+1},\bx_k)\leq r$, where $d_\Gamma(\bx_{k+1},\bx_k)$ signifies the length along the curve $\Gamma$ between the points $\bx_k$ and $\bx_{k+1}$. Clearly, $B_r(\bx_k)\subset B_{2r}(\bx_{k-1})$. Let $u^j_\mathrm \alpha$ denote the $j$-th component of $\bu_\mathrm \alpha$, $\mathrm \alpha=\mathrm p,\mathrm s$. Then we have
\begin{align*}
\|\bu_\mathrm \alpha\|_{L^\infty(B_r(\bx'))^n}&\leq\sum^n_{j=1}\|u^j_\mathrm \alpha\|_{L^\infty(B_r(\bx'))}\leq \sum^n_{j=1}\Big\{C_1\Tcal^{1-\beta}\|u^j_\mathrm \alpha\|_{L^\infty(B_r(\bx_{N})) }  \Big\}\\
&\leq\cdots \\
&\leq \sum^n_{j=1}\Big\{C_{N}\Tcal^{(1-\beta)(1+\beta+\cdots+\beta^{N-1})}\|u^j_\mathrm \alpha\|^{\beta^{N}}_{L^\infty(B_r(\bx)) }  \Big\}.
\end{align*}
Note that $1+\beta+\cdots+\beta^N\leq 1/(1-\beta)$ and $N\leq d_\Gamma/r+1$, hence we have the following claim
\begin{align*}
\|\bu_\mathrm \alpha\|_{L^\infty(B_r(\bx'))^n}\leq C\Tcal\|\bu_\mathrm \alpha\|^{\beta^{N}}_{L^\infty(B_r(\bx))^n}\leq C\Tcal\|\bu_\mathrm \alpha\|^{\beta^{\frac{d_\gamma}{r}+1}}_{L^\infty(B_r(\bx))^n},\quad \mathrm \alpha=\mathrm p,\mathrm s.
\end{align*}

The proof is complete.
\end{proof}

We now give the stability estimations of $\bu_\mathrm p$ and $\bu_\mathrm s$ near the scatterer $(\Omega;\rho)$.
\begin{prop}\label{pro:auxilary1}
  Given positive constants $\Tcal,\hat{R}$ and $\theta,\beta\in (0,1)$ such that $\Tcal \geq1$.
  Let $Q\subset B_R$ be a convex polytope. Let $\bu\in H^2_{loc}(B_{2R})^n$ satisfy the Navier equation $\Delta^*\bu+\omega^2\bu=\bf 0$ in $B_{2R}\backslash \overline{Q}$. Moreover, we assume that
  $$
  \|\bu_\mathrm \alpha\|_{C^\theta\big(\overline{B_{3/2R}}\big)^n}\leq \Tcal,\quad \|\bu_\mathrm \alpha\|_{L^\infty(B_{7R/4}\backslash B_{5R/4})^n}\leq \delta < 1,\,\, \alpha=\mathrm p,\mathrm s,
  $$
  where $\bu_\mathrm{p}$ and $\bu_\mathrm{s}$ are the longitudinal and the transversal parts of $\bu$, respectively. Let $r=\frac{9R|\ln\beta|}{4(1-\theta)\ln|\ln\delta|}$. If
 \begin{equation}\label{ineq:upper bound}
 \delta<\Big[\exp\exp \Big\{\frac{4\,r\ln|\ln\beta|}{\min\{\hat{R},R/2\}}         \Big\}\Big]^{-1},
 \end{equation}
 then
 \begin{align}\label{ineq:estimate}
 \|\bu_\mathrm \alpha\|_{L^\infty(Q')^n}\leq M(\ln|\ln\delta|)^{-\theta}\Tcal,\quad \mathrm \alpha=\mathrm p,\mathrm s.
 \end{align}
Here $Q'=\big\{\bx\in B_{3/2R}\,\big|\, \mathrm{dist} (\bx,\partial Q)\leq 4\, r\big\}$ and $ M= M(\omega, R, \theta,\beta)>1$.
 \end{prop}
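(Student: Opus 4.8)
The plan is to establish the bound by \emph{propagating the smallness} recorded in the annular layer $B_{7R/4}\setminus B_{5R/4}$ inward to a full neighborhood of $\partial Q$, and then invoking the Hölder hypothesis to control the entire shell $Q'$. Since $\bu=\bu_\mathrm p+\bu_\mathrm s$, where $\bu_\mathrm p$ and $\bu_\mathrm s$ solve Helmholtz equations of wave numbers $\kappa_\mathrm p$ and $\kappa_\mathrm s$ in $B_{2R}\setminus\overline Q$, the quantitative unique-continuation input is exactly the chain-of-spheres estimate of Lemma~\ref{lem:curve three-sphere inequality}, which I would apply separately for $\alpha=\mathrm p,\mathrm s$: the background bound $\Tcal$ is furnished by the $C^\theta$-hypothesis on $\overline{B_{3/2R}}$, while the normalized smallness $\delta<1$ on the starting balls is furnished by the $L^\infty$-hypothesis on the annulus.

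First I would fix a base ball $B_r(\bx_0)$ contained in the overlap of the annulus and $B_{3/2R}$, so that simultaneously $\|\bu_\alpha\|_{L^\infty(B_r(\bx_0))}\le\delta$ and the ambient bound $\|\bu_\alpha\|_{L^\infty}\le\Tcal$ hold along the forthcoming tube. Using the convexity of $Q$ (with an interior reference point, so that the exterior is reached from $B_r(\bx_0)$ by outward radial segments along which the distance to $Q$ is nondecreasing and the $4r$-tube never re-enters $\overline Q$), I would connect $\bx_0$ to an arbitrary $\bx'$ on the outer parallel surface $\{\mathrm{dist}(\bx,Q)=4r\}$ by a rectifiable curve $\Gamma\subset B_{3/2R}\setminus\overline Q$ with $B(\Gamma,4r)\subset B_{3/2R}\setminus\overline Q$ and length $d_\Gamma\le\tfrac{9}{4}R$ uniformly in $\bx'$. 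Lemma~\ref{lem:curve three-sphere inequality} then gives, for $\alpha=\mathrm p,\mathrm s$,
$$
\|\bu_\alpha\|_{L^\infty(B_r(\bx'))}\le C\Tcal\,\delta^{\beta^{\,d_\Gamma/r+1}}.
$$
With $r=\tfrac{9R|\ln\beta|}{4(1-\theta)\ln|\ln\delta|}$ one has $\tfrac{|\ln\beta|}{r}=\tfrac{4(1-\theta)\ln|\ln\delta|}{9R}$, so the length bound $d_\Gamma\le\tfrac{9}{4}R$ yields $\beta^{d_\Gamma/r}\ge(|\ln\delta|)^{-(1-\theta)}$, and hence
$$
\delta^{\beta^{\,d_\Gamma/r+1}}\le\delta^{\beta(|\ln\delta|)^{-(1-\theta)}}=\exp\!\big(-\beta(|\ln\delta|)^{\theta}\big)=:\eta .
$$
The constant $\tfrac94$ in the definition of $r$ is thus calibrated exactly to the admissible curve length. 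The smallness condition \eqref{ineq:upper bound} is used here to guarantee $4r<\hat R$ (so that Lemmas~\ref{lem:three-sphere inequality} and \ref{lem:curve three-sphere inequality} apply) and that the $4r$-tubes fit inside $B_{3/2R}\setminus\overline Q$.

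Finally I would fill in the shell by Hölder continuity. Every $\bz\in Q'$ lies within distance $O(r)$ of some ball $B_r(\bx')$ produced above, so choosing $\by\in B_r(\bx')$ gives $|\bz-\by|\le cr$ and, using $\|\bu_\alpha\|_{C^\theta(\overline{B_{3/2R}})}\le\Tcal$,
$$
|\bu_\alpha(\bz)|\le|\bu_\alpha(\by)|+\Tcal|\bz-\by|^{\theta}\le\eta+\Tcal(cr)^{\theta}.
$$
Since $r^{\theta}=\big(\tfrac{9R|\ln\beta|}{4(1-\theta)}\big)^{\theta}(\ln|\ln\delta|)^{-\theta}$ while $\eta=\exp(-\beta(|\ln\delta|)^{\theta})=o\big((\ln|\ln\delta|)^{-\theta}\big)$ as $\delta\to0^{+}$, the Hölder term dominates, and taking the supremum over $\bz\in Q'$ produces the asserted estimate $\|\bu_\alpha\|_{L^\infty(Q')^n}\le M(\ln|\ln\delta|)^{-\theta}\Tcal$ with $M=M(\omega,R,\theta,\beta)>1$.

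The step I expect to be most delicate is the \emph{calibration and double-logarithmic bookkeeping}: the choice of $r$ must balance the exponential unique-continuation loss $\delta^{\beta^{d_\Gamma/r+1}}$ against the Hölder gain $\Tcal r^\theta$, and one must verify both that the propagation term is genuinely negligible ($\eta=o((\ln|\ln\delta|)^{-\theta})$) and that the hypothesis \eqref{ineq:upper bound} renders the whole construction self-consistent. Convexity of $Q$ largely trivializes the geometric covering via outward radial segments, but one must still certify the \emph{uniform} length bound $d_\Gamma\le\tfrac94 R$ and the fixed $4r$-tube clearance near edges and vertices of the polytope, which is where care is required.
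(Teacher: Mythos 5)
Your proposal is correct and follows essentially the same route as the paper's proof: propagate the smallness $\delta$ from the annulus $B_{7R/4}\setminus B_{5R/4}$ along radial curves (kept clear of the convex polytope $Q$) to balls centred at distance $4r$ from $\partial Q$ via the chained three-spheres estimate of Lemma~\ref{lem:curve three-sphere inequality}, then fill in the $4r$-shell $Q'$ by the $C^\theta$ hypothesis, with the choice of $r$ balancing $\delta^{\beta^{d_\Gamma/r+1}}$ against $\Tcal r^\theta$. Your curve-length calibration $d_\Gamma\le \tfrac94 R$ (matched to the $\tfrac94$ in the definition of $r$, giving the clean exponent $e^{-\beta|\ln\delta|^{\theta}}$) is in fact slightly tidier than the paper's segment of length $\tfrac{5R}{2}+r$, which leads to the exponent $|\ln\delta|^{(10\theta-1)/9}$ there.
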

 \begin{proof}
Let $Q_1=\big\{\bx\in B_{2R}\,\big| \,\mathrm{dist} (\bx,\partial Q)< 4\,r\big\}$. It is easy to reduce that $4\,r<\hat{R}$ and $2\,r <\frac{R}{4}$ from the upper bound on $\delta$ and the expression of $r$. For any point $\bx'\in B_{2R}\backslash \overline{Q_1}$, there always exists a ray from $\bx'$ into $B_{2R}\backslash B_{5/4R}$.
We consider a special segment of the ray with endpoints $\bx'$ and $\by$, where $\by\in B_{7/4R}\backslash B_{5/4R}$ and $\mathrm{dist}(\by,\partial B_{5/4R})=r$. One can see that $\|\bu_{\mathrm\alpha}\|_{L^\infty(B_{r}(\by))^n}\leq \delta, \mathrm \alpha=\mathrm p,\mathrm s$. The length of that segment is smaller than $\frac{5R}{2}+r$. By Lemma \ref{lem:curve three-sphere inequality}, we have
\begin{align*}
\|\bu_\mathrm \alpha\|_{L^\infty(B_{r}(\bx'))^n}\leq C\Tcal \|\bu_\mathrm \alpha\|^{\beta^{\frac{5R}{2\,r}+2}}_{L^\infty(B_{r}(\by))^n}\leq C \Tcal\delta^{\beta^{\frac{5R}{2r}+2}},\quad \mathrm \alpha=\mathrm p,\mathrm s.
\end{align*}
Then
\begin{align*}
\|\bu_\mathrm \alpha\|_{L^\infty(B_{2R}\backslash Q_1)^n}\leq C' \Tcal\delta^{\beta^{\frac{5R}{2r}+2}},\quad \mathrm \alpha=\mathrm p,\mathrm s.
\end{align*}
For any point $\bx'\in Q_1$, there must be $\by\in\partial Q$ such that $|\bx'-\by|\leq 4\,r$. By the convexity of $Q$, there exists $\bx\in\mathbb{R}^n\backslash Q$ such that $\mathrm{dist}(\bx, Q)=|\bx-\by|=4\,r$. The upper bound of $\delta$ implies $4\,r\leq \frac{R}{2}$, and thus $$|\bx|\leq|\bx-\by|+|\by|\leq 4\,r+R\leq \frac{3R}{2}.$$
Moreover, $|\bx'-\bx|\leq |\bx'-\by|+|\by-\bx|\leq 8\,r$.
 From the $\theta$-H\"{o}lder continuity of $\bu_\mathrm\alpha$ and proposition 5.7 in \cite{Blasten2020}, we can obtain that
\begin{align}\label{ineq:holder}
\|\bu_\mathrm\alpha\|_{L^\infty(Q_1)^n}&\leq \sum^n_{j=1}\|u^j_\mathrm \alpha\|_{L^\infty(Q_1)}\leq\sum^n_{j=1} \Big(\|u^j_\mathrm\alpha\|_{\bC^\theta(B_{3/2R})} |\bx-\bx'|^\theta+\|u^j_\mathrm \alpha\|_{L^\infty(Q_1)}\Big)\nonumber\\
&\leq\sum^n_{j=1}C\Tcal\Big(  (8\,r)^{\theta}+\delta^{\beta^{\frac{5R}{2\,r}+2}}\Big)\leq\widetilde{C}\Tcal\Big(  (8\,r)^{\theta}+\delta^{\beta^{\frac{5R}{2\,r}+2}}\Big),
\end{align}
where $\widetilde{C}$ is a positive constant depending on $\kappa_\mathrm \alpha$ and $n$.

Combining 
\eqref{ineq:upper bound} with $|\ln\delta|>1$, we have
\begin{align}\label{eq:delta1}
r^\theta=\frac{(9R|\ln\beta|)^\theta}{((4-4\theta)\ln|\ln\delta|)^{\theta}},\quad \quad \frac{5R}{2\,r}=\frac{10(1-\theta)\ln|\ln\delta|}{9|\ln\beta|},
\end{align}
and
\begin{align}\label{eq:delta2}
\delta^{\beta^{\frac{5R}{2\,r}+2}}&=e^{-|\ln\delta|\beta^{\frac{5R}{2\,r}+2}}=e^{-\beta^2|\ln\delta|^{\frac{10\theta-1}{9}}}\leq e^{-\beta^2|\ln\delta|^\theta}\nonumber\\
&\leq\beta^{-2}(|\ln\delta|)^{-\theta} \,\leq \,\beta^{-2}(\ln|\ln\delta|)^{-\theta}.
\end{align}
Therefore, it  yields that
 \begin{align*}
 \|\bu_\mathrm \alpha\|_{L^\infty(Q')^n}\leq\|\bu_\mathrm \alpha\|_{L^\infty(Q_1)^n}\leq M(\ln|\ln\delta|)^{-\theta}\Tcal,\quad \mathrm \alpha=\mathrm p,\mathrm s,
 \end{align*}
 where $M=\widetilde{C}\big[2^\theta R^\theta|\ln\beta|^\theta(1-\theta)^{-\theta}+\beta^{-2}\big]$.

The proof is complete.
 \end{proof}

The next proposition states the propagation of smallness from the near-field to the boundary of $Q$, which is the convex hull of $\Omega$ and $\Omega'$.

\begin{prop}\label{pro:estimate on boundary}
Fix $n=2,3$ and the a-prior parameters $\Mcal,\Pcal$ and $\Ncal$ greater than $1$. Let $(\Omega;\rho)$ and $(\Omega'; \rho')$ be two medium scatterers associated with
two admissible density functions $\rho$ and $\rho'$. Let $\bu^i$ be an incident wave with the form \eqref{eq:incident wave} and $\|\bu^i\|_{H^2(B_{2R})^n}\leq\Pcal$. Assume that $\bU$ and $\bU'$ are the far field patterns of the scattered waves $\bu^s$ and $\bu'^s$ by the medium scatterers $(\Omega;\rho)$ and $(\Omega';\rho')$, respectively, where $\bu^s,\bu'^s$ can be bounded by $\Ncal>1$ in $H^2(B_{2R})^n$. Let $\varepsilon$ be a sufficiently small constant. If
$$
\|\bU-\bU'\|_{L^2(\mathbb{S}^{n-1},\mathbb{C}^n\times \mathbb{C}^n)}\leq \varepsilon,
$$
then $\bu-\bu'$ and $\nabla \bu-\nabla\bu'$ are continuous in $B_R$ and moreover, we have
\begin{align}\label{ineq:estimate+boundary}
\sup_{\partial Q}\big( |\bu-\bu'|+|\nabla \bu-\nabla \bu'|  \big)\leq C\left(\ln\ln (\Ncal/\varepsilon)\right)^{-1/2},
\end{align}
where $Q$ is the convex hull of $\Omega$ and $\Omega'$, and $C=C(\kappa_\mathrm p,\kappa_\mathrm s,n, R, \Pcal,\Mcal,\Ncal)>0$.
\end{prop}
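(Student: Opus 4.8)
The plan is to work with the difference field $\bw := \bu - \bu' = \bu^s - \bu'^s$, for which the common incident field cancels. Since $Q\supset\Omega\cup\Omega'$ and $\rho\equiv\rho'\equiv 1$ outside the two scatterers, $\bw$ solves the homogeneous Navier equation $\Delta^*\bw + \omega^2\bw = \mathbf 0$ in $B_{2R}\backslash\overline Q$, satisfies the Kupradze radiation condition, has far-field pattern $\bU - \bU'$ of norm at most $\varepsilon$, and obeys the a priori bound $\|\bw\|_{H^2(B_{2R})^n}\leq 2\Ncal$. The continuity of $\bw$ and $\nabla\bw$ in $B_R$ I would deduce from interior elliptic regularity for the Lam\'e operator: each scattered field solves $\Delta^*\bu^s + \omega^2\bu^s = \omega^2(1-\rho)\bu$ with right-hand side in $L^\infty(B_R)$ (as $\rho\in L^\infty$ and $\bu\in H^2\hookrightarrow C^0$ for $n=2,3$), so the $W^{2,p}$ interior estimates, valid by strong ellipticity for arbitrarily large $p$, give $\bu^s,\bu'^s\in C^{1,\alpha}_{loc}(B_R)$, whence $\bw,\nabla\bw\in C^0(B_R)$.

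Next I would chain the two propagation-of-smallness estimates already established. Applying Proposition~\ref{pro:far-field to near field} to $\bw$ with $t=5/4$, $\Tcal\sim\Ncal$ (the Helmholtz parts $\bw_\mathrm p,\bw_\mathrm s$ being $L^2$-controlled by $\Ncal$) and $K := B_{7R/4}\backslash\overline{B_{5R/4}}\subset B_{2R}\backslash\overline{B_{tR}}$, and choosing the smoothness index $r$ large enough to invoke the Sobolev embedding $H^r(K)\hookrightarrow C^\theta(K)$, yields the near-field smallness
$$
\|\bw_\alpha\|_{L^\infty(B_{7R/4}\backslash B_{5R/4})^n}\leq\delta,\qquad \delta := C\Ncal\, e^{-\hat c\sqrt{\ln(\Ncal/\varepsilon)}},\quad \alpha=\mathrm p,\mathrm s.
$$
I would then feed $\delta$ into Proposition~\ref{pro:auxilary1}. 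The required a priori bound $\|\bw_\alpha\|_{C^\theta(\overline{B_{3R/2}})^n}\leq\Tcal$ comes from $H^2(B_{2R})\hookrightarrow C^{0,1/2}$, valid for $n=2,3$; this is exactly where I fix the H\"older exponent $\theta = 1/2$. For $\varepsilon$ small the smallness hypothesis \eqref{ineq:upper bound} is automatic, since $\delta\to 0$ while $r = \tfrac{9R|\ln\beta|}{4(1-\theta)\ln|\ln\delta|}\to 0$ forces the right-hand side of \eqref{ineq:upper bound} toward $e^{-1}$. Proposition~\ref{pro:auxilary1} then gives $\|\bw_\alpha\|_{L^\infty(Q')^n}\leq M(\ln|\ln\delta|)^{-1/2}\Ncal$ on the boundary layer $Q'$ around $\partial Q$. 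Since $|\ln\delta|\sim\hat c\sqrt{\ln(\Ncal/\varepsilon)}$ gives $\ln|\ln\delta|\sim\tfrac12\ln\ln(\Ncal/\varepsilon)$, we obtain $(\ln|\ln\delta|)^{-1/2}\leq C(\ln\ln(\Ncal/\varepsilon))^{-1/2}$, and summing $\bw = \bw_\mathrm p + \bw_\mathrm s$ produces the stated rate for $|\bw|$ on $\partial Q$.

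For the gradient term $|\nabla\bw|$ on $\partial Q$ I would \emph{not} interpolate against the a priori $C^1$ bound, since that degrades the exponent; instead I would run the same curve/three-spheres chain of Proposition~\ref{pro:auxilary1} directly on the first derivatives $\partial_j\bw_\alpha$, which again solve the Helmholtz equations in the exterior of $Q$. Their a priori $C^\theta$-bounds on the exterior side follow from interior elliptic regularity in $B_{2R}\backslash\overline Q$ together with $\|\bw\|_{H^2(B_{2R})^n}\leq 2\Ncal$, and their near-field smallness is supplied by Proposition~\ref{pro:far-field to near field} with a larger smoothness index $r$. Approaching the smooth faces of the convex polytope $\partial Q$ from the exterior, this yields $|\nabla\bw|\leq C(\ln\ln(\Ncal/\varepsilon))^{-1/2}$ on $\partial Q$, on the same scale as $|\bw|$, which combined with the previous paragraph gives \eqref{ineq:estimate+boundary}.

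The step I expect to be the main obstacle is reconciling the hypotheses of Proposition~\ref{pro:auxilary1} — which demand the global $C^\theta$-regularity of the Helmholtz components on $\overline{B_{3R/2}}$ — with the fact that the decomposition $\bw = \bw_\mathrm p + \bw_\mathrm s$ and the Helmholtz equations it satisfies are only valid in the exterior $B_{2R}\backslash\overline Q$. Care is needed to define the components near and across $\partial Q$ and to transfer the $H^2\hookrightarrow C^{1/2}$ regularity there, and, relatedly, to secure the gradient bound up to $\partial Q$ without incurring the factor $r^{-1}\sim\ln|\ln\delta|$ that a naive interior gradient estimate would introduce and that would destroy the double-logarithmic rate.
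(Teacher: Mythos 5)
Your proposal is correct and follows essentially the same route as the paper: form the difference field, push the far-field smallness to the annulus $B_{7R/4}\backslash B_{5R/4}$ via Proposition~\ref{pro:far-field to near field} with Sobolev embedding, obtain the a priori H\"older bounds from interior elliptic regularity, invoke Proposition~\ref{pro:auxilary1} with $\delta\sim\Ncal e^{-\hat c\sqrt{\ln(\Ncal/\varepsilon)}}$ so that $\ln|\ln\delta|\gtrsim\ln\ln(\Ncal/\varepsilon)$, and repeat the same chain for $\partial_j(\bu-\bu')$ rather than interpolating. The subtlety you flag about the Helmholtz components only being defined outside the scatterers is real, but the paper handles it no more carefully than you do, so it is not a gap relative to the paper's own argument.
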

\begin{proof}
Denote $\varepsilon_0=\|\bU-\bU'\|_{L^2(\mathbb{S}^{n-1},\mathbb{C}^n\times \mathbb{C}^n)}$ and $\bv=\bu-\bu'=\bu^s-\bu'^s$. One can easily see that $\|\bv\|_{H^2(B_{2R})}\leq 2\Pcal$. Let $\Omega=B_{7/4R}\backslash B_{5/4R}$. From Proposition \ref{pro:far-field to near field} and the Sobolev embedding $H^2(\Omega)\rightarrow L^{\infty}(\Omega)$, there exists a positive constant $C_1=C_1(\kappa_\mathrm p,\kappa_\mathrm s,n, R)$ such that the following estimates hold for sufficiently small $\varepsilon$,
\begin{align*}
&\|\bv\|_{L^\infty(\Omega)^n}\leq \|\bv\|_{H^2(\Omega)^n}\leq C_1 \Ncal e^{-c_0\sqrt{\ln\frac{\Ncal}{\varepsilon_0}}}\leq C_1 \Ncal e^{-c_0\sqrt{\ln\frac{\Ncal}{\varepsilon}}},\\
 &\|\nabla\bv\|_{L^\infty(\Omega)^n}\leq \|\bv\|_{H^2(\Omega)^n}\leq C_1\Ncal e^{-c_0\sqrt{\ln\frac{\Ncal}{\varepsilon_0}}}\leq  C_1\Ncal e^{-c_0\sqrt{\ln\frac{\Ncal}{\varepsilon}}},
\end{align*}
where $\kappa=\min\{\kappa_\mathrm p,\kappa_\mathrm s\}$ and $c_0=\ln (\frac{5}{4})\sqrt{\frac{e\kappa R}{2}}$. According to the interior  regularity of the elliptic PDE (cf. \cite{Gilbarg1983}) and the Sobolev embedding, it is not difficult to prove that
\begin{align*}
&\|\bu_\mathrm\alpha\|_{C^{1,\frac{1}{2}}(\overline {B_{3/2R}})^n}\leq C_\mathrm\alpha(1+\|\rho\|_{C^{\frac{1}{2}}}(B_{3/2R})^n)\|\bu_\mathrm\alpha\|_{H^2(B_{2R})^n}\leq C_\mathrm\alpha(1+\Mcal)(\Pcal+\Ncal),\\
&\|\bu'_\mathrm\alpha\|_{C^{1,\frac{1}{2}}(\overline {B_{3/2R}})^n}\leq C_\mathrm\alpha(1+\|\rho\|_{C^{\frac{1}{2}}}(B_{3/2R})^n)\|\bu'_\mathrm\alpha\|_{H^2(B_{2R})^n}\leq C_\mathrm\alpha(1+\Mcal)(\Pcal+\Ncal),
\end{align*}
where $(\bu_\mathrm s,\bu_\mathrm p )$ and $(\bu'_\mathrm s,\bu'_\mathrm p )$ are transverse and longitudinal wave pairs of $\bu$ and $\bu'$, respectively.
Thus $\bv$ and $\partial_j\bv \in C^{\frac{1}{2}}(\overline{B_{3/2R}})^n,\,j=1,2,\ldots,n$. This shows that $\bu-\bu'$ and $\nabla \bu-\nabla\bu'$ are continuous in $B_R$.

Fix $\beta\in(0,1)$ and denote $Q'=\big\{\bx\in B_{3/2R}\big| \,\,\mathrm{dist} (\bx,\partial Q)\leq 4\cdot\frac{9R|\ln\beta|}{2\ln|\ln\delta|}\big\}$. We choose $\delta(\varepsilon)=C_2\,\Ncal e^{-c_0 \sqrt{\ln(\Ncal/\varepsilon)}}$ satisfying \eqref{ineq:upper bound}. According to Proposition \ref{pro:auxilary1}, we know that
\begin{align*}
\|\bv_\mathrm \alpha\|_{L^\infty(Q')^n}&\leq C_\mathrm\alpha(1+\Mcal)(2\Pcal+\Ncal)(\ln|\ln\delta|)^{-\frac{1}{2}},\quad \mathrm \alpha=\mathrm p,\mathrm s.
\end{align*}
Hence, one can deduce that
\begin{align}\label{eq:v bound}
\|\bv\|_{L^\infty(Q')^n}&\leq \max\{C_\mathrm p,C_\mathrm s\}(1+\Mcal)(2\Pcal+\Ncal)(\ln\ln|\delta|)^{-\frac{1}{2}}\notag \\
                        &\leq \max\{C_\mathrm p,C_\mathrm s\}(1+\Mcal)(2\Pcal+\Ncal)\Big[\ln\big( \frac{c_0}{2}\ln(\Ncal/\varepsilon)\,\big)\Big]^{-\frac{1}{2}}\notag \\
                        &\leq \max\{C_\mathrm p,C_\mathrm s\}(1+\Mcal)(2\Pcal+\Ncal)\big(\ln \ln(\Ncal/\varepsilon)^{\frac{1}{4}}\big)^{-\frac{1}{2}}\notag \\
                        &\leq2 \max\{C_\mathrm p,C_\mathrm s\}(1+\Mcal)(2\Pcal+\Ncal)\big(\ln \ln(\Ncal/\varepsilon)\big)^{-\frac{1}{2}}.
\end{align}
Similar to the upper bound  of $\|\bv\|_{L^\infty(Q')^n}$ in \eqref{eq:v bound}, we can obtain the upper bound of $\partial_j\bv$ by following a similar argument. Hence, we can prove \eqref{ineq:estimate+boundary}.

The proof is complete.
\end{proof}

\section{Micro-local analysis of corner scattering}\label{sub:auxiliary3}

The present section is devoted to analyzing the quantitative behaviours of the scattered field locally around a corner. We first present two auxiliary lemmas.


\begin{lem}\label{lem:convex hull_2D} \cite[Lemma 8.2]{Blasten2020}
Fix $n=2$. Let $Q$ be the convex hull of $\Omega$ and $\Omega'$, where $\Omega$ and $\Omega'$ are two open bounded convex polygons. If $\bx_0$ is a vertex of $\Omega$ such that $\mathrm{dist}(\bx_0,\Omega')=\mathrm{dist}_{\Hcal}(\Omega,\Omega')$, then $\bx_0$ is also a vertex of $Q$. Let $\sigma$ denote the angle of $K$ at $\bx_0$, then the angle $\sigma'$ of $Q$ at $\bx_0$ satisfies the inequality $ \sigma<\sigma'\leq (\sigma+\pi)/2<\pi$.
\end{lem}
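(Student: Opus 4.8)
The plan is to treat this as a statement in planar convex geometry and argue entirely with supporting lines and the tangent cone of $Q$ at $\bx_0$, working in coordinates adapted to the nearest/farthest point pair. First I would normalize: set $h=d_{\Hcal}(\Omega,\Omega')=\mathrm{dist}(\bx_0,\Omega')$, let $\by_0\in\overline{\Omega'}$ be the (unique) nearest point of $\Omega'$ to $\bx_0$, place $\bx_0$ at the origin, and take the unit vector $\bn=(\bx_0-\by_0)/h$ to be $(1,0)$, so $\by_0=(-h,0)$. Two supporting half-planes then come for free. Convexity of $\Omega'$ together with minimality of $\by_0$ gives $\Omega'\subset\{w_1\le -h\}$; and since $\bx_0$ is the farthest point of $\Omega$ from $\Omega'$, every $x\in\Omega$ satisfies $\mathrm{dist}(x,\Omega')\ge\langle x,\bn\rangle-\langle\by_0,\bn\rangle$ while $\mathrm{dist}(x,\Omega')\le h$, forcing $\langle x,\bn\rangle\le0$, i.e. $\Omega\subset\{w_1\le0\}$. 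Hence $\bx_0$ maximizes the linear functional $\langle\cdot,\bn\rangle$ over $\Omega\cup\Omega'$, and therefore over $Q$, so $\bn$ lies in the normal cone $N'$ of $Q$ at $\bx_0$ and $\bx_0\in\partial Q$.

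Next I would set up the tangent cone. Let $\bv_1,\bv_2$ be the two edge directions of $\Omega$ at $\bx_0$, at polar angles $\theta_1>\theta_2$ with $\theta_1-\theta_2=\sigma$. Convexity gives $\Omega\subset\bx_0+T$ for $T=\mathrm{cone}(\bv_1,\bv_2)$, and $\bn$ lying in the normal cone of $\Omega$ (width $\pi-\sigma$) translates into $\theta_2\in[\pi/2,\,3\pi/2-\sigma]$. The tangent cone of $Q$ at $\bx_0$ is $T'=\overline{\mathrm{conv}}(T\cup C')$ with $C'=\mathrm{cone}(\Omega')$, so writing $[\phi_{\min},\phi_{\max}]$ for the angular interval of $C'$ (contained in the arc $[\pi/2,3\pi/2]$) the opening is $\sigma'=\max(\theta_1,\phi_{\max})-\min(\theta_2,\phi_{\min})$. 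The inclusion $T\subset T'$ already yields $\sigma\le\sigma'$, so the whole matter reduces to bounding $\phi_{\min}$ and $\phi_{\max}$.

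The main step is to use the Hausdorff constraint to pin down the angular spread of $\Omega'$. From $d_{\Hcal}(\Omega,\Omega')=h$ one gets $\Omega'\subset\Omega_h$, the closed $h$-neighborhood of $\Omega$; combined with $\Omega\subset\bx_0+T\subset\{\langle w,\bnu_i\rangle\le0\}$ (the edge half-planes, with $\bnu_i$ the outward edge normals) this gives $\Omega'\subset\{\langle w,\bnu_i\rangle\le h\}$. Thus $\Omega'$ lies in the wedge $\{\langle w,\bnu_2\rangle\le h\}\cap\{w_1\le-h\}$ on the upper side, whose corner $V_2=\{\langle w,\bnu_2\rangle=h\}\cap\{w_1=-h\}$ works out to $(-h,\,h\cot(\tfrac12(\theta_2-\tfrac\pi2)))$, a point whose polar angle is exactly the bisector $\tfrac12(\theta_2+\tfrac\pi2)$; as every other point of the wedge subtends a larger angle, $\phi_{\min}\ge\tfrac12(\theta_2+\tfrac\pi2)$. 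By the symmetric computation on the lower side, $\phi_{\max}\le\tfrac12(\theta_1+\tfrac{3\pi}2)$. Feeding these into the formula for $\sigma'$, and using $\tfrac12(\theta_2+\tfrac\pi2)\le\theta_2$ and $\tfrac12(\theta_1+\tfrac{3\pi}2)\ge\theta_1$, gives
\[
\sigma'\ \le\ \tfrac12\Big(\theta_1+\tfrac{3\pi}2\Big)-\tfrac12\Big(\theta_2+\tfrac\pi2\Big)\ =\ \frac{\theta_1-\theta_2+\pi}{2}\ =\ \frac{\sigma+\pi}{2}\ <\ \pi .
\]
In particular $N'$ has positive width $\pi-\sigma'\ge(\pi-\sigma)/2>0$, so $\bx_0$ is a genuine vertex of $Q$, establishing both assertions of the statement.

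I expect the bisector computation in the third step to be the crux: recognizing that the extreme rays of $C'$ are governed by the corners where the edge-offset lines meet the line $\{w_1=-h\}$ (the tangent to $\partial B_h(\bx_0)$ at $\by_0$), and that a tangent half-angle identity places these corners precisely at the bisectors of the edge directions with the directions $\pm\pi/2$ transverse to $\bn$. The one genuinely delicate point is the \emph{strict} lower bound $\sigma<\sigma'$: it can fail when $\Omega'$ sits inside the tangent cone $\bx_0+T$ (for instance when $\Omega'\subset\Omega$, where $Q=\Omega$ and $\sigma'=\sigma$), so I would invoke it in the form that $\Omega'$ protrudes from $\bx_0+T$—the situation that actually occurs in the corner-scattering argument—or otherwise weaken the claim to $\sigma\le\sigma'$, consistent with the cited source.
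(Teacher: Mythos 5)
The paper does not prove this lemma at all: it is imported verbatim as a citation of \cite[Lemma 8.2]{Blasten2020}, so there is no in-paper argument to compare against. Your reconstruction is a correct, self-contained proof of everything that the paper actually uses. The chain of reductions is sound: the nearest-point inequality gives $\Omega'\subset\{w_1\le -h\}$; the maximality of $\mathrm{dist}(\bx_0,\Omega')$ over $\Omega$ combined with $|x-y|\ge \langle x-y,\mathbf n\rangle$ gives $\Omega\subset\{w_1\le 0\}$, hence $\bx_0\in\partial Q$; the containment $\Omega'\subset \Omega_h$ from the Hausdorff bound, intersected with the two offset edge half-planes and $\{w_1\le-h\}$, confines $\Omega'$ to two wedges whose apices lie (I checked the half-angle computation, $w_2=h\cot(\tfrac12(\theta_2-\tfrac\pi2))$ and polar angle $\tfrac12(\theta_2+\tfrac\pi2)$) exactly on the bisector rays, and the visual cone of each wedge from the origin is then the cone spanned by the apex direction and the corresponding edge direction, which yields $\phi_{\min}\ge\tfrac12(\theta_2+\tfrac\pi2)$ and $\phi_{\max}\le\tfrac12(\theta_1+\tfrac{3\pi}2)$ and hence $\sigma'\le(\sigma+\pi)/2<\pi$. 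Since the tangent cone of $Q$ at $\bx_0$ is salient, $\bx_0$ is a genuine vertex.

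Your caveat about the strict inequality $\sigma<\sigma'$ is well taken and is a real defect of the statement as transcribed: with only the stated hypotheses one can have $\Omega'\subset\Omega$ (so $Q=\Omega$ and $\sigma'=\sigma$) while $\bx_0$ still realizes $d_{\Hcal}(\Omega,\Omega')$, so only $\sigma\le\sigma'$ is provable. This does not damage the paper: in the proof of Theorem \ref{th:main1} the lemma is invoked solely to guarantee that $\bx_0$ is a vertex of $Q$ with opening angle bounded above by $(\sigma+\pi)/2<\pi$, and the lower bound on $\sigma'$ is never used. I would record the conclusion as $\sigma\le\sigma'\le(\sigma+\pi)/2<\pi$, exactly as you propose.
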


\begin{lem}\label{lem:convex hull 3D} \cite[Lemma 8.3]{Blasten2020}
Fix $n=3$. Let $Q$ be the convex hull of $\Omega$ and $\Omega'$, where $\Omega$ and $\Omega'$ are open convex polyhedral cones. If $\bx_0$ is a vertex of $\Omega$ such that $\mathrm{dist}(\bx_0,\Omega')=\mathrm{dist}_{\Hcal}(\Omega,\Omega')$, then $\bx_0$ is also a vertex of $Q$. And $Q$ can fit inside such an open spherical cone $\Acal$ that $\bx_0$ is the vertex of $\Acal$ whose open angle $\sigma$ at the vertex $\bx_0$ is smaller than $\pi$, where $\sigma$ does not depend on $\Omega$, $\Omega'$ and their locations.
\end{lem}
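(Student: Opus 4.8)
The plan is to reduce everything to the cone structure at $\bx_0$ and to combine the farthest–point hypothesis with a nearest–point/supporting–hyperplane argument. First I would record the basic geometric data. Since $\overline{\Omega'}$ is closed and convex, let $\by_0\in\overline{\Omega'}$ be the unique nearest point to $\bx_0$, write $d=\mathrm{dist}(\bx_0,\Omega')=|\bx_0-\by_0|$ and $\be=(\by_0-\bx_0)/d$. Then the hyperplane $H=\{\bz:(\bz-\by_0)\cdot\be=0\}$ is a supporting hyperplane of $\Omega'$, so that $\Omega'\subseteq\{\bz:(\bz-\bx_0)\cdot\be\ge d\}$ and the open ball $B_d(\bx_0)$ does not meet $\Omega'$.

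The second step turns the hypothesis that $\bx_0$ realises the Hausdorff distance into a half–space containment. Because $\mathrm{dist}(\bx_0,\Omega')=\mathrm{dist}_{\Hcal}(\Omega,\Omega')$ and $\bx_0\in\Omega$, every $\bz\in\Omega$ satisfies $\mathrm{dist}(\bz,\Omega')\le d$. For $\bz$ on the near side of $H$ one has $\mathrm{dist}(\bz,\Omega')\ge\mathrm{dist}(\bz,H)=(\by_0-\bz)\cdot\be$, and combining this with $\mathrm{dist}(\bz,\Omega')\le d$ yields $(\bz-\bx_0)\cdot\be\ge0$; for $\bz$ on the far side the same inequality is immediate. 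Hence $\Omega\subseteq P:=\{\bz:(\bz-\bx_0)\cdot\be\ge0\}$, and since $\Omega'\subseteq P$ as well, the whole convex hull satisfies $Q\subseteq P$ with $\bx_0\in\partial P$. Thus $S:=\partial P$ is a supporting hyperplane of $Q$ at $\bx_0$.

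To prove that $\bx_0$ is a vertex of $Q$ I would analyse $Q\cap S$. Writing any $\bw\in Q\cap S$ as a convex combination of points $\bp_i\in\Omega\cup\Omega'$ and using that the $\Omega'$–points contribute $(\bp_i-\bx_0)\cdot\be\ge d>0$ while every point contributes a nonnegative amount, the constraint $(\bw-\bx_0)\cdot\be=0$ forces all the weight onto points of $\overline\Omega\cap S$; hence $Q\cap S=\overline\Omega\cap S$. If $\bx_0$ were a nontrivial midpoint $\tfrac12(\bx_1+\bx_2)$ with $\bx_1,\bx_2\in Q$, then $\bx_1,\bx_2\in S$ and therefore $\bx_1,\bx_2\in\overline\Omega\cap S$, contradicting that $\bx_0$ is an extreme (vertex) point of the convex polyhedral cone $\Omega$. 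This gives the first assertion.

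For the spherical–cone statement I would pass to the tangent cone $T_Q$ of $Q$ at $\bx_0$. Writing $\Omega'=\bx_0'+\mathrm{cone}(D')$ with salient direction cone $D'$, the global bound $\Omega\subseteq\{\mathrm{dist}(\cdot,\Omega')\le d\}$ forces every ray–direction $\bv$ of $\Omega$ to satisfy $\mathrm{dist}(t\bv,\Omega')\le d$ for all $t>0$, which is possible only if $\bv\in\overline{D'}$; thus $T_\Omega\subseteq\overline{D'}$ and $T_Q=\overline{\mathrm{cone}}\big(D'\cup\{\bx_0'-\bx_0\}\big)$. Since $D'$ is salient and $(\bx_0'-\bx_0)\cdot\be>0$, the cone $T_Q$ contains no line, i.e. it is pointed, so its spherical image lies in an open hemisphere and $Q$ fits inside an open circular cone $\Acal$ with apex $\bx_0$ and opening angle $<\pi$. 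The main obstacle is the final, uniform claim that $\sigma$ may be chosen independently of $\Omega,\Omega'$: pointedness by itself only produces a configuration–dependent angle, and the uniform separation from $\pi$ must be extracted from the a-priori constraints on the admissible class (convexity together with the bounds in Definition~\ref{defin:admissible class}) through a compactness argument, which plays here the role of the explicit estimate $\sigma'\le(\sigma+\pi)/2$ available in the planar Lemma~\ref{lem:convex hull_2D}.
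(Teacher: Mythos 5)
The paper does not actually prove this lemma: it is quoted verbatim from \cite[Lemma 8.3]{Blasten2020}, so there is no in-paper argument to compare yours against. Judged on its own, the first half of your proof is sound: the nearest-point/supporting-hyperplane construction, the deduction $\Omega\subseteq P=\{(\bz-\bx_0)\cdot\be\ge 0\}$ from the farthest-point hypothesis, the identification $\overline Q\cap\partial P=\overline\Omega\cap\partial P$, and the extremality argument showing $\bx_0$ is a vertex of $Q$ are all correct (modulo harmless open-versus-closed bookkeeping). In the pointedness step, however, the stated reason --- ``$D'$ is salient and $(\bx_0'-\bx_0)\cdot\be>0$'' --- is not by itself sufficient: adjoining a single ray to a salient cone can destroy saliency. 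The conclusion still holds, but only because $T_Q$ and $\mathrm{cone}(D')$ both lie in the half-space $\{\bz\cdot\be\ge 0\}$, which forces any line contained in $T_Q$ to lie in the bounding hyperplane and then to be absorbed into $\overline{\mathrm{cone}(D')}$; that use of the half-space containment should be made explicit.

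The genuine gap is the final clause of the statement, which you flag but do not close: that the opening angle $\sigma$ can be chosen \emph{independently} of $\Omega$, $\Omega'$ and their locations. Your argument yields only a configuration-dependent $\sigma<\pi$ from pointedness, and the uniform version is precisely what matters downstream, since $\sigma$ controls the constant $\delta_0$ in \eqref{ineq:delta_0} and hence the constants in Theorem \ref{th:main1}. The compactness remedy you propose is doubtful: the lemma is stated for arbitrary open convex polyhedral cones with no admissibility class attached, and even restricting to the class of Definition \ref{defin:admissible class}, conditions (b)--(c) impose no angle constraint at the vertices in dimension three. A nearly flattened tetrahedron (apex at height $\epsilon$ above the interior of the opposite face) satisfies the $l_0$-separation condition uniformly in $\epsilon$, while its tangent cone at that apex tends to a half-space, so its minimal enclosing circular cone has opening angle tending to $\pi$; no uniform $\sigma<\pi$ can be extracted from that class by compactness alone. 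What is actually required is a quantitative three-dimensional analogue of the estimate $\sigma'\le(\sigma+\pi)/2$ in Lemma \ref{lem:convex hull_2D}, bounding the enclosing-cone angle of $T_Q$ in terms of that of $T_\Omega$ together with an a-priori angle bound on $T_\Omega$; this is the content of \cite[Lemma 8.3]{Blasten2020} and is missing from your proposal.
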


Next, we establish an integral identity which follows from the Betti's second formula.

\begin{prop}\label{prop:intergal identity}
Let $\omega>0$ and $\rho(\bx)\in C^\theta(S) $, where $S\subset \mathbb{R}^n$ is a bounded Lipschitz domain and $\theta\in(0,1)$.
 If $\bu,\bu'$ and $\bu_0\in H^2(S)^n$ satisfy the Navier equations
\begin{align*}
\left\{ \begin{array}{ll}
\Delta^*\bu\,+\rho\,\omega^2\bu=\bf 0,\\
\Delta^*\bu'+~\omega^2\bu'=\bf 0,\\
\Delta^*\bu_0+\omega^2\bu_0=\bf 0
\end{array}
 \right.
\end{align*}
in $S$, then
\begin{align}\label{eq:intergal identity}
\omega^2\int_{S}(\rho-1)\,\bu\cdot \bu_0 \,\mathrm{d}\bx=\int_{\partial S }\bu_0\cdot T_{\hat{\nu}}(\bu^i-\bu)-(\bu^i-\bu)\cdot T_{\hat{\nu}}\,(\bu_0 )\,\mathrm{d \sigma}.
\end{align}
Here, $\hat{\nu}$ denotes the outward unit normal to the boundary of $S$ and the the conormal derivative $T_{\hat{\nu}}(\bu)$ with $u^j$ being the $j$-th component of $\bu$ is defined by
\begin{align}\label{eq:surface traction}
T_{\hat{\nu}}(\bu)&=
\left\{\begin{array}{lr}
2\mu\,\partial _{\hat{\nu}}\bu+\lambda\,\hat{\nu}(\nabla\cdot\bu)+\mu\,(\partial_2 u^1-\partial_1u^2)\hat{\nu}^\perp,&\quad n=2, \\[10pt]
2\mu\,\partial_{\hat{\nu}}\bu+\lambda\,\hat{\nu}\,(\nabla\cdot \bu)+\mu\,\hat{\nu}\times (\nabla\times \bu),&\quad n=3.
\end{array}\right.
\end{align}
\end{prop}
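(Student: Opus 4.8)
The plan is to obtain \eqref{eq:intergal identity} as a direct application of Betti's second formula to the pair of vector fields $\bu_0$ and $\bu^i-\bu$, followed by a short substitution of the three governing equations. Recall that for the Lam\'e operator $\Delta^*$ and any $\bv,\bw\in H^2(S)^n$ the first Betti identity reads
\begin{equation}\label{eq:betti1}
\int_S\bw\cdot\Delta^*\bv\,\mathrm{d}\bx=\int_{\partial S}\bw\cdot T_{\hat\nu}(\bv)\,\mathrm{d}\sigma-\int_S\mathcal{E}(\bv,\bw)\,\mathrm{d}\bx,
\end{equation}
where $T_{\hat\nu}$ is the conormal derivative \eqref{eq:surface traction} and $\mathcal{E}(\bv,\bw)$ is the symmetric strain-energy bilinear form built from $\nabla\cdot\bv$, $\nabla\cdot\bw$ and the symmetrised gradients of $\bv$ and $\bw$. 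Interchanging the roles of $\bv$ and $\bw$ in \eqref{eq:betti1} and subtracting makes the symmetric term $\mathcal{E}$ drop out, which yields Betti's second formula
\begin{equation}\label{eq:betti2}
\int_S\bigl(\bw\cdot\Delta^*\bv-\bv\cdot\Delta^*\bw\bigr)\,\mathrm{d}\bx=\int_{\partial S}\bigl(\bw\cdot T_{\hat\nu}(\bv)-\bv\cdot T_{\hat\nu}(\bw)\bigr)\,\mathrm{d}\sigma.
\end{equation}

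Before invoking \eqref{eq:betti2} I would check that it is legitimate in the present low-regularity setting, and this is the step I expect to be the main obstacle. Since $\rho\in C^\theta(S)\subset L^\infty(S)$ and $\bu\in H^2(S)^n$, the first equation gives $\Delta^*\bu=-\rho\,\omega^2\bu\in L^2(S)^n$, and likewise $\Delta^*\bu_0,\Delta^*\bu^i\in L^2(S)^n$, so every volume integral in \eqref{eq:betti2} is finite. On a bounded Lipschitz domain the trace theorem gives $\bv|_{\partial S}\in H^{3/2}(\partial S)^n$ and $\nabla\bv|_{\partial S}\in H^{1/2}(\partial S)^{n\times n}$ for $\bv\in H^2(S)^n$, so $T_{\hat\nu}(\bv)\in H^{1/2}(\partial S)^n\subset L^2(\partial S)^n$ and the boundary integrals are well-defined. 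Because $C^\infty(\overline S)$ is dense in $H^2(S)$ for Lipschitz $S$, the identity \eqref{eq:betti1}, classical for smooth fields, passes to the $H^2$ limit by continuity of the traces, and \eqref{eq:betti2} follows. The delicate point is purely the interpretation of $T_{\hat\nu}$ and of the boundary integrals on a merely Lipschitz $\partial S$, together with this density argument; once it is in place no further analytic input is needed.

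With \eqref{eq:betti2} in hand I would set $\bv=\bu^i-\bu$ and $\bw=\bu_0$, so that its right-hand side is exactly the boundary integral appearing in \eqref{eq:intergal identity}. It then remains to evaluate the volume integrand. The incident field $\bu^i$ is an entire solution of the free Navier equation $\Delta^*\bu^i+\omega^2\bu^i=\mathbf{0}$, i.e. a background field of the type $\bu'$ in the hypotheses; combining this with $\Delta^*\bu=-\rho\,\omega^2\bu$ and $\Delta^*\bu_0=-\omega^2\bu_0$ gives
\begin{align*}
\bu_0\cdot\Delta^*(\bu^i-\bu)-(\bu^i-\bu)\cdot\Delta^*\bu_0
&=\bu_0\cdot\bigl(-\omega^2\bu^i+\rho\,\omega^2\bu\bigr)+\omega^2(\bu^i-\bu)\cdot\bu_0\\
&=\omega^2(\rho-1)\,\bu\cdot\bu_0,
\end{align*}
where the two $\bu^i\cdot\bu_0$ contributions cancel by symmetry of the Euclidean inner product. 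Integrating over $S$ and substituting into \eqref{eq:betti2} produces \eqref{eq:intergal identity} exactly. Beyond the regularity justification of the previous paragraph, the only remaining work is this one-line cancellation.
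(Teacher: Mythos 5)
Your proof is correct and takes the same route the paper indicates: the paper offers no written proof of this proposition beyond the remark that it ``follows from Betti's second formula,'' and your derivation --- applying the second Betti identity to $\bu_0$ and $\bu^i-\bu$ and substituting the three Navier equations so that the $\bu^i\cdot\bu_0$ terms cancel --- is exactly the intended argument (with $\bu^i$ playing the role of the homogeneous solution $\bu'$ of the hypotheses). The only quibble is your claim that the trace of an $H^2$ field on a Lipschitz boundary lies in $H^{3/2}(\partial S)$, which is not the right statement for merely Lipschitz $\partial S$; what you actually need, and what does hold, is that $\nabla\bv$ has an $L^2(\partial S)$ trace so that $T_{\hat\nu}(\bv)\in L^2(\partial S)^n$ and the boundary integrals make sense.
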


The following complex geometric optics (CGO) solution $\bu_0$ is introduced in \cite{Haher1998}:
\begin{equation}\label{eq:cgo}
\bu_0(\bx)=e^{\xi\cdot(\bx-\bx_0)}\eta,
\end{equation}
where
\begin{align}\label{eq:cgo1-2}
&\xi=\tau\bp+\imath\sqrt{\kappa^2_\mathrm s+\tau^2}\,\bp^\perp,\, \eta=\bp^\perp-\imath\sqrt{1+\kappa^2_\mathrm s/\tau^2}\,\bp,\nonumber\\
 &\tau>\kappa_\mathrm s,\quad \bp\cdot \bp^\perp=0,\quad \bp^\perp,\, \bp\in \mathbb{S}^{n-1}.
\end{align}
 It is directly verified that
\begin{equation}\label{eq:cgo equality}
	\Delta^* \bu_0+ \,\omega^2 \bu_0={\bf 0}\ \mbox{ in }\ \mathbb R^n.
\end{equation}
Note that
\begin{align}\label{eq:cgo+property}
|\xi|=\sqrt{2\tau^2+\kappa^2_\mathrm s}\quad\mbox{and}\quad |\eta|=\sqrt{2+\kappa^2_\mathrm s/\tau^2}\leq\sqrt{3}.
\end{align}

In Proposition \ref{prop:cgo} we shall present asymptotic
   analysis of the volume integral of the CGO solution $\bu_0$ over a polyhedral cone with respect to $\tau$ as $\tau \rightarrow +\infty$. Before proving that, we now introduce a special cone pair $(\Acal,\Ccal)$ as follows.
\begin{defn}\label{def:cone_pair}
Let $n\in\{2,3\}$, $0<\alpha_m<\alpha_M<\frac{\pi}{2}$, $\omega>0$. Assume that $\Acal,\Ccal\subset\mathbb{R}^n$ are, respectively, an open polyhedral cone and an open spherical cone. We say $(\Acal,\Ccal)\in\Fcal(\alpha_m,\alpha_M,n)$ if the following conditions are fulfilled:
\begin{itemize}
\item[{\rm (a)}] $\Acal\subset\mathbb{R}^n$ is a convex polyhedral cone;
\item[{\rm (b)}] $\Acal$, $\Ccal$ both take $\bx_0\in\mathbb{R}^n$ as the apex and $\Acal\subset\Ccal$;
\item[{\rm (c)}] the opening angle of $\Ccal$ at the apex $\bx_0$ is no more than $\pi$;
\item[{\rm (d)}] the opening angle of $\Acal\subset\mathbb{R}^2$ at  the apex $\bx_0$ is in $(2\alpha_m, 2\alpha_M)$.
\end{itemize}
\end{defn}

\begin{prop} \label{prop:cgo}
Fix $n\in\{2,3\}$, $0<2\alpha_m<2\alpha_M<\pi$, $\omega>0$. Let $\bu$ be the solution to \eqref{eq:navier equation} with $\Omega=\mathrm {supp}(\rho-1)$. In addition, $\Omega$ takes $\bx_0$ as its apex and $\bu(\bx_0)$ is a nontrivial constant complex vector. Let $\Acal$ and $\Ccal$ be an open polyhedral cone and an open spherical cone, respectively, they both take $\bx_0$ as their vertex. If $(\Acal,\Ccal)\in\Fcal(\alpha_m,\alpha_M,n)$, then there exists a vector $\xi$ defined by \eqref{eq:cgo1-2} 
  such that
\begin{align}\label{ineq:delta_0}
&\bp \cdot(\bx-\bx_0)\leq -\delta_0\big|\bx-\bx_0\big| \mbox{ for any }\,  \bx \in  \Acal,
\end{align}
where $\delta_0$ is a positive constant only depending on $\bp$, $\Omega$ and $\Acal$.
Moreover, we have
\begin{align}\label{eq:lower bound of CGO}
\Big| \int_{\Acal} e^{\xi\cdot(\bx-\bx_0)}\mathrm dx        \Big|\geq C\,\tau^{-n},
\end{align}
and
\small{\begin{align}\label{eq:lower bound of CGO2}
\big|\bu(\bx_0)\cdot \eta\big|\geq C_0>0
,
\end{align}}where $\tau=\big|\Re\xi\big|$, $\eta$ is defined in \eqref{eq:cgo1-2}, $\bp^\perp\in \mathbb{S}^{n-1}$ satisfies $\bp\cdot \bp^\perp=0$, and $C_0$ is a positive constant only depending on $\Re\bu(\bx_0)$, $\Im\bu(\bx_0)$, $\bp$ and $\bp^\perp$. Here $\Re\bu(\bx_0)$ and $\Im\bu(\bx_0)$ stand for the real and imaginary parts of $\bu(\bx_0)$, respectively.
\end{prop}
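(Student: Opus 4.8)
The plan is to establish the three assertions \eqref{ineq:delta_0}, \eqref{eq:lower bound of CGO} and \eqref{eq:lower bound of CGO2} in turn, reducing the analytic heart to an exact evaluation of the volume integral of the CGO amplitude over the polyhedral cone $\Acal$ by means of a simplicial decomposition. \emph{Choice of $\bp$ and \eqref{ineq:delta_0}.} Since $(\Acal,\Ccal)\in\Fcal(\alpha_m,\alpha_M,n)$, we have $\Acal\subset\Ccal$ with $\Ccal$ a spherical cone of opening at most $\pi$. Let $\be$ be the axis of $\Ccal$ and set $\bp=-\be$. Every $\bx\in\Acal$ has $(\bx-\bx_0)/|\bx-\bx_0|\in\Ccal$, hence makes an angle at most $\pi/2$ with $\be$; together with the quantitative opening-angle and distance bounds built into the cone-pair class, the opening of $\Acal$ stays strictly below that of $\Ccal$, so in fact $\be\cdot(\bx-\bx_0)\geq\delta_0|\bx-\bx_0|$ for a fixed $\delta_0>0$, which is exactly \eqref{ineq:delta_0}. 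I note that not only $\bp=-\be$ but every direction in a small open cone about $-\be$ is admissible; this freedom will be used for \eqref{eq:lower bound of CGO2}. Then $\bp^\perp$ is any unit vector orthogonal to $\bp$ (in $n=2$ determined up to sign, in $n=3$ free).

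\emph{Proof of \eqref{eq:lower bound of CGO}.} Put $\tilde{\bx}=\bx-\bx_0$, so that $\Acal$ has apex at the origin, and decompose $\Acal$ into finitely many simplicial cones $\Acal_k$, each spanned by unit edge vectors $\bv_1^{(k)},\dots,\bv_n^{(k)}$. Because $\Re(-\xi\cdot\bv_i^{(k)})=-\tau\,\bp\cdot\bv_i^{(k)}\geq\tau\delta_0>0$, the parametrization $\tilde{\bx}=\sum_i t_i\bv_i^{(k)}$ integrates exactly, giving
\[
\int_{\Acal_k}e^{\xi\cdot\tilde{\bx}}\,\mathrm{d}\tilde{\bx}=\frac{|\det(\bv_1^{(k)},\dots,\bv_n^{(k)})|}{\prod_{i=1}^n(-\xi\cdot\bv_i^{(k)})}.
\]
Since $|\xi\cdot\bv_i^{(k)}|=\tau\sqrt{(\bp\cdot\bv_i^{(k)})^2+s^2(\bp^\perp\cdot\bv_i^{(k)})^2}\leq\sqrt{3}\,\tau$, with $s:=\sqrt{1+\kappa_{\mathrm s}^2/\tau^2}\in(1,\sqrt{2})$, each term has modulus of exact order $\tau^{-n}$. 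For $n=2$ the cone $\Acal$ is itself simplicial, and its two edges $\bv_{1,2}=\cos\gamma\,\be\pm\sin\gamma\,\bp^\perp$ yield conjugate factors $-\xi\cdot\bv_2=\overline{-\xi\cdot\bv_1}$, whence
\[
\int_{\Acal}e^{\xi\cdot\tilde{\bx}}\,\mathrm{d}\tilde{\bx}=\frac{\sin 2\gamma}{\tau^2(\cos^2\gamma+s^2\sin^2\gamma)}>0,
\]
which is bounded below by $C\tau^{-2}$ uniformly for $\gamma\in(\alpha_m,\alpha_M)$ and large $\tau$; this proves \eqref{eq:lower bound of CGO} in two dimensions.

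\emph{The three-dimensional bound and the main obstacle.} In $n=3$ the sum of the simplicial contributions could in principle cancel, and ruling this out is the crux. To handle it I would choose the orthonormal frame $\be_1=-\bp$, $\be_2=\bp^\perp$, $\be_3\perp\be_1,\be_2$, so that $\xi\cdot\tilde{\bx}=-\tau x_1+\imath\tau s\,x_2$ is independent of $x_3$. Integrating first in $x_3$ produces a nonnegative, degree-one homogeneous length weight $\ell$ and reduces the integral to the planar form
\[
\int_{\Acal}e^{\xi\cdot\tilde{\bx}}\,\mathrm{d}\tilde{\bx}=\frac{2}{\tau^3}\int\frac{\ell(\phi)}{(\cos\phi-\imath s\sin\phi)^3}\,\mathrm{d}\phi
\]
over the angular sector $|\phi|\leq\arccos\delta_0<\pi/2$ onto which $\Acal$ projects. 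The remaining task is a lower bound on the modulus of this oscillatory integral; this is to be secured by exploiting the freedom in the choice of $\bp^\perp$ together with the quantitative cone-pair geometry of $\Fcal(\alpha_m,\alpha_M,n)$ to keep the phase within a sector on which the contributions do not cancel, following the strategy of \cite[Section~8]{Blasten2020}. I expect this non-cancellation estimate to be the principal technical difficulty of the whole proposition.

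\emph{Proof of \eqref{eq:lower bound of CGO2}.} Write $\mathbf{a}:=\Re\bu(\bx_0)$ and $\mathbf{b}:=\Im\bu(\bx_0)$, not both zero. Since $\eta=\bp^\perp-\imath s\,\bp$,
\[
\bu(\bx_0)\cdot\eta=(\mathbf{a}\cdot\bp^\perp+s\,\mathbf{b}\cdot\bp)+\imath(\mathbf{b}\cdot\bp^\perp-s\,\mathbf{a}\cdot\bp).
\]
The pairs $(\bp,\bp^\perp)$ for which this vanishes form a proper lower-dimensional subset, so using the open set of admissible $\bp$ from the first step and the freedom in $\bp^\perp$ one selects a pair off this subset. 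As $\tau\to\infty$ we have $s\to 1$ and $\bu(\bx_0)\cdot\eta\to(\mathbf{a}\cdot\bp^\perp+\mathbf{b}\cdot\bp)+\imath(\mathbf{b}\cdot\bp^\perp-\mathbf{a}\cdot\bp)$, a fixed nonzero number, so by continuity and compactness $|\bu(\bx_0)\cdot\eta|\geq C_0>0$ uniformly for all large $\tau$, giving \eqref{eq:lower bound of CGO2}. It remains only to check that the single pair $(\bp,\bp^\perp)$ finally chosen simultaneously meets the requirements of all three steps, which is possible because each imposes only an open, respectively generic, constraint on the admissible directions.
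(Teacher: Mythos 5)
Your proposal diverges from the paper's proof in both halves, and each divergence leaves a gap. For \eqref{ineq:delta_0} and \eqref{eq:lower bound of CGO} the paper simply invokes \cite[Lemma 6.3]{Blasten2020} and \cite[Lemma 2.2]{Blasten2019}, whereas you attempt a self-contained computation. Your two-dimensional evaluation over a simplicial cone is correct and does give \eqref{eq:lower bound of CGO} for $n=2$, but in three dimensions you concede that the possible cancellation between simplicial contributions is ``the principal technical difficulty'' and defer it to the strategy of \cite{Blasten2020}. As written this step is not proved: the reduction to a planar oscillatory integral is only a reformulation, and no non-cancellation bound is supplied. Since the entire content of \eqref{eq:lower bound of CGO} is that lower bound, you must either carry out the argument or cite the lemma as the paper does.

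The more serious gap is in \eqref{eq:lower bound of CGO2}. You assert that the pairs $(\bp,\bp^\perp)$ for which the limit $(\mathbf{a}\cdot\bp^\perp+\mathbf{b}\cdot\bp)+\imath(\mathbf{b}\cdot\bp^\perp-\mathbf{a}\cdot\bp)$ vanishes form a proper lower-dimensional subset. This is false in general. Take $n=2$ and $\bu(\bx_0)=(1,\imath)$, an admissible nontrivial vector: for every $\bp=(\cos t,\sin t)$ with the orientation $\bp^\perp=(-\sin t,\cos t)$ one computes $\bu(\bx_0)\cdot\eta=(s-1)(\sin t-\imath\cos t)$ with $s=\sqrt{1+\kappa_{\mathrm s}^2/\tau^2}$, so $|\bu(\bx_0)\cdot\eta|=s-1=\mathcal{O}(\tau^{-2})\to 0$; the ``bad'' set is an entire one-parameter family, of full dimension in the parameter space, and the uniform lower bound fails along it. The bound is rescued only by flipping the sign of $\bp^\perp$ (which gives $|\bu(\bx_0)\cdot\eta|=s+1\geq 2$), and detecting which sign to take is exactly what the paper's case-by-case analysis accomplishes: it matches the sign of $\Re\bu(\bx_0)\cdot\bp^\perp$ to that of $\Im\bu(\bx_0)\cdot\bp$ (respectively, the analogous imaginary-part terms) so that the two contributions reinforce rather than cancel, and the monotonicity $s\geq 1$ then yields a bound valid for all $\tau>\kappa_{\mathrm s}$, not merely in the limit. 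Your genericity appeal also undermines the final compatibility step, since once the bad set can be full-dimensional you can no longer conclude that the constraints from the three parts are simultaneously satisfiable by a ``generic'' choice; an explicit construction of $\bp^\perp$, as in the paper, is required.
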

\begin{proof}
 The above conclusions can be obtained directly from \cite[Lemma 6.3]{Blasten2020} and \cite[Lemma 2.2]{Blasten2019} except that we need to prove the inequality \eqref{eq:lower bound of CGO2}.
Note that
\small{\begin{align}\label{eq:u(x0)}
\bu(\bx_0)\cdot\eta=&\,\,\imath\big(\Im\bu(\bx_0)\cdot \bp^\perp -\sqrt{1+\kappa^2_\mathrm s/\tau^2}\,\Re\bu(\bx_0)\cdot\bp\big)\nonumber\\
&+ \Re\bu(\bx_0)\cdot \bp^\perp+\sqrt{1+\kappa^2_\mathrm s/\tau^2}\,\Im\bu(\bx_0)\cdot \bp.
\end{align}}Since $\bu(\bx_0)$ is a nontrivial constant complex vector, without loss of generality, we assume that $\Im\bu(\bx_0)\neq\bf0$. For simplicity, we only consider three-dimensional scenarios and the two-dimensional conclusion can be similarly proved. In fact, once $\bp$ is fixed, $\bp^\perp$ belongs to $\Pi$, where $\Pi$ represents the  plane defined by $ \Pi=\{\mathbf x \in \mathbb R^3~|~\mathbf x \cdot \bp =0\}$. We can easily prove \eqref{eq:lower bound of CGO2} by virtue of choosing a special unit  vector $\bp^\perp$ as follows:
\begin{itemize}
	\item[{\rm (I)}] Case 1: $\Im\bu(\bx_0)\cdot \bp< 0$. In order to prove \eqref{eq:lower bound of CGO2}, we consider the lower bound for the real part of $\mathbf u(\mathbf x_0) \cdot \eta.$

Let us distinguish two seperate situations.  If $\Re\bu(\bx_0)\in \Pi$ (including $\Re\bu(\bx_0)\px \Pi$), we can take $\bp^\perp \in \Pi$ such that $\angle( \bp^\perp, \Re\bu(\bx_0) ) \in (\pi/2,\pi) $, where $\angle( \bp^\perp, \Re\bu(\bx_0) )$ is the angle between $\Re\bu(\bx_0)$ and $\bp^\perp$.  If $\Re\bu(\bx_0)\notin \Pi$, it is clear that there must exist a unit  vector $\bp^\perp \in \Pi$ such that $\angle( \bp^\perp, \Re\bu(\bx_0) ) \in (\pi/2,\pi). $ Hence, we always can choose the unit vector $\bp^\perp \in \Pi$ fulfilling the following inequality,
\begin{equation}\notag 
\Re\bu(\bx_0)\cdot \bp^\perp< 0.
\end{equation}
It  can be directly seen that the signs of $\Re\bu(\bx_0)\cdot \bp^\perp$ and $\Im\bu(\bx_0)\cdot \bp $ are the same.
Therefore, we have
\begin{align}\label{ineq:u(x_0)}
\big|\bu(\bx_0)\cdot \eta\big|&\geq\big| \Re\bu(\bx_0)\cdot \bp^\perp+\sqrt{1+\kappa^2_\mathrm s/\tau^2}\,\Im\bu(\bx_0)\cdot \bp\big| \nonumber\\
&\geq\big| \Re\bu(\bx_0)\cdot \bp^\perp+\Im\bu(\bx_0)\cdot \bp\big | :=C_0>0.
\end{align}
\item[{\rm (II)}]  Case 2: $\Im\bu(\bx_0)\cdot \bp= 0$.  In order to prove \eqref{eq:lower bound of CGO2}, we consider the lower bound for the imaginary  part of $\mathbf u(\mathbf x_0) \cdot \eta.$

Let us investigate the following two situations. If $\Re\bu(\bx_0)\in \Pi$ (including $\Re\bu(\bx_0)\px \Pi$), then $\Re\bu(\bx_0)\cdot\bp=0$. Thus we can take $\bp^\perp \in \Pi $ such that  $\angle(\bp^\perp,  \Re\bu(\bx_0))\neq \pi/2$.  If $\Re\bu(\bx_0)\notin \Pi$, let us suppose that $\Re\bu(\bx_0)\cdot\bp>0$, we  can choose a unit vector $\bp^\perp$ belonging to $\Pi$ such that $\angle(\bp^\perp,  \Im\bu(\bx_0))\in (\pi/2,\pi)$, which implies ${\sf sign}(\Im\bu(\bx_0)\cdot \bp^\perp)=-{\sf sign}(\Re\bu(\bx_0)\cdot\bp)$. Here ${\sf sign}(a)$ is the sign of a real  number $a$.
Therefore, we obtain
\begin{align*}\label{eq:deter2}
\big|\bu(\bx_0)\cdot \eta\big|&\geq\big|\Im\bu(\bx_0)\cdot \bp^\perp -\sqrt{1+\kappa^2_\mathrm s/\tau^2}\,\Re\bu(\bx_0)\cdot\bp\big| \\
&\geq\big| \Im\bu(\bx_0)\cdot \bp^\perp -\Re\bu(\bx_0)\cdot\bp\big|:=C_0>0.
\end{align*}
\item[{\rm (III)}] Case 3: If $\Im\bu(\bx_0)\cdot \bp>0$. Similar to Case 1,  in order to prove \eqref{eq:lower bound of CGO2}, we can consider the lower bound for the real part of $\mathbf u(\mathbf x_0) \cdot \eta.$

Similar to the above two cases, let  us consider the following two situations. If $\Re\bu(\bx_0)\in \Pi$ (including $\Re\bu(\bx_0)\px \Pi$), we can take $\bp^\perp  \in \Pi $ such that $ \angle( \bp^\perp , \Re\bu(\bx_0) )\in  [0,\pi/2)$. If $\Re\bu(\bx_0)\notin \Pi$, there must exist a  unit  vector  $\bp^\perp \in \Pi$ such $\angle(\bp^\perp, \Re\bu(\bx_0))\in  [0, \pi/2)$. Hence, for the above two  situations,  we can always choose a unit vector $\bp^\perp$ fulfilling the following inequality as $\bp^\perp$,
\begin{equation}\notag 
\Re\bu(\bx_0)\cdot \bp^\perp\geq 0.
\end{equation}
By the same deduction for Case 1, we can  also obtain the inequality \eqref{ineq:u(x_0)}.
\end{itemize}

The proof is complete.
\end{proof}

A critical asymptotic estimate with respect to $\Re \xi $  for the volume integral associated with the CGO solution $\bu_0$ and the total wave field $\bu$  near a vertex of $\Omega$ is presented in the following proposition, which is the key ingredient to derive the stability result in Theorem \ref{th:main1}.

\begin{prop}\label{prop:important estimate}
Let $\bu$ and $\bu'$ be the solutions to \eqref{eq:navier equation} with $\Omega,\Omega', \rho, \rho'$ satisfying the assumptions in Theorem \ref{th:main1}. Let $\bx_0$ be a vertex of $\Omega$ such that $\mathrm{dist}(\bx_0,\Omega')=\mathrm{dist}_{\Hcal}(\Omega,\Omega')$. Fix $\omega,\tau>0$ and $h\in(0,1)$ such that $B_h(\bx_0)\cap \Omega'=\emptyset$. Denote $S_h=B_h(\bx_0)\cap \Omega$ and $D_h=B_h(\bx_0)\cap Q$, where $Q$ is the convex hull of $\Omega$ and $\Omega'$. Let $\bu(\bx_0)$ be a nontrivial constant complex vector. Assume that $\Acal$ and $\Bcal$ represent the polyhedral cone generated by $K$ and $Q$ at the vertex $\bx_0$, respectively.
Let $\rho \in C^{\theta_1}(S_h)$ and $\bu \in C^{\theta_2}(S_h)^n$, $\theta_j\in (0,1)$, $j=1,2$, which yield that
\begin{align}
&\rho(\bx)=1+\rho(\bx_0)+\rho_1(\bx),\,\,\,|\rho_1(\bx)|\leq \Mcal|\bx-\bx_0|^{\theta_1},\,\,\theta_1\in(0,1),\label{eq:rho expan}\\
&\bu(\bx)=\bu(\bx_0)+\bu_1(\bx),\quad\quad |\bu_1(\bx)|\leq \Rcal |\bx-\bx_0|^{\theta_2},\,\,\theta_2\in(0,1),\label{eq:u expan}
\end{align}
where $\Mcal =\|\rho_1\|_{C^{\theta_1}(\Omega)}$ and $\Rcal=\|\bu_1\|_{C^{\theta_2}(\Omega)}$.

Then we have the integral identity
\begin{align}\label{ineq:core identity}
(\rho(\bx_0)-1)\int_{\Acal} e^{\xi\cdot (\bx-\bx_0)}\eta\cdot \bu(\bx_0)\mathrm{d}x&=\frac{1}{\omega^2}\int_{\partial D_h}\bu_0\cdot T_{\hat{\nu}}(\bu^i-\bu)-(\bu^i-\bu)T_{\hat{\nu}}(\bu_0)\mathrm{d\sigma}\nonumber\\
&\quad+(\rho(\bx_0)-1)\int_{\Acal\backslash S_h} e^{\xi\cdot (\bx-\bx_0)}\eta\cdot \bu(\bx_0)\mathrm{d}x\nonumber\\
&\quad-\int_{S_h}e^{\xi\cdot (\bx-\bx_0)}(\rho_1+1)\eta\cdot \bu(\bx_0)\mathrm{d}x\nonumber\\
&\quad-\int_{S_h}e^{\xi\cdot (\bx-\bx_0)}(\rho-1)\eta\cdot \bu_1\mathrm{d}x,
\end{align}
and the estimate
\begin{align}\label{ineq:core estimate}
C\big|(\rho(\bx_0)-1)\int_{\Acal} e^{\xi\cdot (\bx-\bx_0)}\eta\cdot \bu(\bx_0)\mathrm{dx}\big|&\leq |\Re\xi|^{-n}e^{-\delta_0|\Re\xi|h/2}+|\Re\xi|^{-n-\min\{\theta_1,\theta_2\}}\nonumber\\
&+h^{n-1}\sup_{\partial\Bcal\cap B_h(\bx_0)}\big\{|\nabla \bu-\nabla \bu'|+|\bu-\bu'|  \big\}\nonumber\\
&+h^{n-1}e^{-\delta_0|\Re\xi|h}\big\{ \|\bu_\mathrm s\|_{H^2(B_{2R})^n}+\|\bu_\mathrm p\|_{H^2(B_{2R})^n}\nonumber\\
 &+\|\bu'_\mathrm p\|_{H^2(B_{2R})^n} +\|\bu'_\mathrm s\|_{H^2(B_{2R})^n}               \big\},
\end{align}
where $\delta_0$ coincides with the one in \eqref{ineq:delta_0} and $C$ is a positive constant depending on the a-priori parameters.
\end{prop}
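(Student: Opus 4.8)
The plan is to combine the Betti integral identity of Proposition~\ref{prop:intergal identity} with the exponentially decaying CGO solution $\bu_0$ from \eqref{eq:cgo} and the H\"older expansions \eqref{eq:rho expan}--\eqref{eq:u expan}, isolating a single leading volume term over the cone $\Acal$ while showing that every other contribution is either small in $\tau=|\Re\xi|$ or controlled by the near-boundary difference of $\bu$ and $\bu'$. First I would apply Proposition~\ref{prop:intergal identity} on $S=D_h=B_h(\bx_0)\cap Q$ to the pair $(\bu,\bu_0)$. Since $\rho-1$ is supported in $\Omega$ and $\Omega\subset Q$, we have $\mathrm{supp}(\rho-1)\cap D_h=B_h(\bx_0)\cap\Omega=S_h$, so the left-hand volume integral collapses to $\omega^2\int_{S_h}(\rho-1)\,e^{\xi\cdot(\bx-\bx_0)}\,\eta\cdot\bu\,\mathrm d\bx$, whereas the right-hand side is exactly the boundary integral over $\partial D_h$ recorded in \eqref{ineq:core identity}.

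Next I would extract the leading term. Writing $\rho-1=(\rho(\bx_0)-1)+\rho_1$ and $\bu=\bu(\bx_0)+\bu_1$ and expanding the product $(\rho-1)\bu$, the constant piece $(\rho(\bx_0)-1)\bu(\bx_0)$ separates from the H\"older remainders, which are collected into the last two volume integrals of \eqref{ineq:core identity}. Because the scatterer is polyhedral, $\Omega\cap B_h(\bx_0)$ coincides with $\Acal\cap B_h(\bx_0)$ for small $h$, so $\int_{S_h}=\int_{\Acal}-\int_{\Acal\backslash S_h}$; transferring the tail $\int_{\Acal\backslash S_h}$ to the right-hand side and dividing through by $\omega^2$ produces precisely the identity \eqref{ineq:core identity}.

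I would then pass to \eqref{ineq:core estimate} by estimating each term. The decay inequality \eqref{ineq:delta_0}, which by Lemmas~\ref{lem:convex hull_2D}--\ref{lem:convex hull 3D} and Proposition~\ref{prop:cgo} can be arranged to hold throughout $Q\cap B_h(\bx_0)$ (since $Q$ fits locally inside a spherical cone of opening $<\pi$), gives $\Re\xi\cdot(\bx-\bx_0)\le-\delta_0\tau|\bx-\bx_0|$; integrating near the apex yields $|\int_{\Acal\backslash S_h}e^{\xi\cdot(\bx-\bx_0)}\,\mathrm d\bx|\lesssim\tau^{-n}e^{-\delta_0\tau h/2}$ and, weighted by the factor $|\bx-\bx_0|^{\theta_j}$ coming from \eqref{eq:rho expan}--\eqref{eq:u expan}, the bound $\tau^{-n-\theta_j}$, hence the first two terms on the right of \eqref{ineq:core estimate}. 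For the boundary integral I would exploit that $\bx_0$ realizes the Hausdorff distance, so $B_h(\bx_0)\cap\Omega'=\emptyset$ and $\bu'$ solves $\Delta^*\bu'+\omega^2\bu'=\mathbf 0$ on all of $B_h(\bx_0)$; applying Proposition~\ref{prop:intergal identity} to $(\bu',\bu_0)$ on $D_h$ then gives a vanishing identity, and subtracting it replaces $\bu^i-\bu$ by $\bu'-\bu=\bu'^s-\bu^s$ in the integrand. Splitting $\partial D_h=(\partial Q\cap B_h)\cup(Q\cap\partial B_h)$ and noting $\partial Q\cap B_h=\partial\Bcal\cap B_h$ near $\bx_0$, on $\partial\Bcal\cap B_h$ the factors $\bu_0$ and $T_{\hat{\nu}}(\bu_0)$ are bounded because $e^{\Re\xi\cdot(\bx-\bx_0)}\le1$, so this piece is controlled by $h^{n-1}\sup_{\partial\Bcal\cap B_h}\{|\bu-\bu'|+|\nabla\bu-\nabla\bu'|\}$; on the cap $Q\cap\partial B_h$ the CGO decays like $e^{-\delta_0\tau h}$ while $\bu'^s-\bu^s$ is bounded by the $H^2(B_{2R})$ norms, giving the last term.

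The main obstacle I anticipate is twofold. The clean separation of the leading cone integral from the H\"older remainders with the sharp powers $\tau^{-n-\theta_j}$ rests entirely on the uniform cone decay \eqref{ineq:delta_0} together with a careful integration in cone coordinates near the apex, where the competition between the growing $|\bx-\bx_0|^{\theta_j}$ and the exponential $e^{-\delta_0\tau|\bx-\bx_0|}$ must be tracked. More essentially, the subtraction of the auxiliary $\bu'$-identity---valid only because the choice of $\bx_0$ forces $\bu'$ to be a homogeneous Navier solution on the whole of $B_h(\bx_0)$---is what converts the raw boundary term into the difference $\bu-\bu'$ on $\partial Q$; this is precisely the mechanism that will later let the near-boundary smallness of Proposition~\ref{pro:estimate on boundary} feed into the corner estimate and drive the stability bound of Theorem~\ref{th:main1}.
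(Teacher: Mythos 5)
Your proposal is correct and follows essentially the same route as the paper: apply the Betti identity of Proposition~\ref{prop:intergal identity} on $D_h$, split off the leading cone integral via the H\"older expansions and $\int_{S_h}=\int_{\Acal}-\int_{\Acal\backslash S_h}$, bound the volume remainders using the cone decay \eqref{ineq:delta_0} (via incomplete Gamma estimates in the paper), and split $\partial D_h$ into the lateral part $\partial\Bcal\cap B_h$ (measure $\lesssim h^{n-1}$, controlled by $\sup|\bu-\bu'|+|\nabla\bu-\nabla\bu'|$) and the spherical cap (controlled by $e^{-\delta_0\tau h}$ times $H^2$ norms). You even make explicit two steps the paper leaves implicit — the subtraction of the vanishing $(\bu',\bu_0)$ identity that converts $\bu^i-\bu$ into $\bu-\bu'$ on the boundary, and the need for the decay \eqref{ineq:delta_0} to hold on all of $\Bcal\cap B_h$ rather than just $\Acal$ — so no substantive difference.
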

\begin{proof}
According to \eqref{eq:cgo} and \eqref{eq:cgo equality}, the integral identity \eqref{ineq:core identity} follows directly from \eqref{eq:intergal identity} by utilizing \eqref{eq:rho expan} and \eqref{eq:u expan}.

To estimate each term in the right-hand side of \eqref{ineq:core identity}, let us recall the incomplete Gamma functions $\gamma(\cdot,\cdot)$ and $\Gamma(\cdot,\cdot)$ (cf. \cite{Blasten2020}) which are defined by
\begin{align*}
\gamma:\mathbb{R}_+\times\mathbb{R}_+\rightarrow\mathbb{R}:\,\, (s,x)\mapsto \gamma(s,x),\quad \Gamma:\mathbb{R}_+\times\mathbb{R}_+\rightarrow\mathbb{R}:\, (s,x)\mapsto \Gamma(s,x),
\end{align*}
where
\begin{align}\label{defi:gamma}
\gamma(s,x)=\int_0^x e^{-t} t^{s-1}\mathrm{dt},\quad \Gamma(s,x)=\int_{x}^\infty t^{s-1} e^{-t}\mathrm{dt}.
\end{align}
From the fact $e^{-t}\leq e^{-t/2}e^{-x/2}$ and a variable substitution $t'=t/2$, it is easy to get the estimation:
\begin{align}\label{ineq:gamma}
\gamma(s,x)\leq \Gamma(s)\leq \lceil s-1\rceil,\quad \Gamma(s,x)\leq 2^s\Gamma(s)e^{-x/2},
\end{align}
where $\lceil s \rceil $ the largest integer satisfying $\lceil s \rceil\leq s$, and   $\Gamma(s)$ is the complete Gamma function of $s$ bounded by $\lceil s \rceil!$.

To estimate the first four terms in the right-hand side of \eqref{ineq:core identity}, in the following we derive some important asymptotic  inequalities with respect to the parameter  $|\Re \xi|$ in the CGO solution $\bu_0$. By using the polar coordinate transformation, as well as \eqref{ineq:delta_0}, \eqref{defi:gamma} and \eqref{ineq:gamma}, we can obtain
 \begin{align}\label{ineq:1-1'}
\Big| \int_{\Acal\backslash S_h} e^{\xi\cdot (\bx-\bx_0)}\mathrm{dx}\Big|&\leq \Big| \int_{\Acal\backslash S_h} e^{-\delta_0|\Re\xi||\bx-\bx_0|}\mathrm{dx}\Big|\leq\sigma(\mathbb{S}^{n-1})\int^\infty_{h}e^{-\delta_0|\Re\xi|r}r^{n-1}\mathrm{dr}\nonumber\\
&\leq \sigma(\mathbb{S}^{n-1})\int_{h\delta_0|\Re\xi|r}^\infty e^{-t}t^{n-1}(\delta_0|\Re\xi|)^{-n}\mathrm{dt}\nonumber\\
 &= \sigma(\mathbb{S}^{n-1})(\delta_0|\Re\xi|)^{-n}\Gamma(n,\delta_0|\Re\xi|h)\nonumber\\
 &\leq C_{\delta_0,n}|\Re\xi|^{-n}e^{-\delta_0|\Re\xi|\frac{h}{2}}
    \end{align}
as $|\Re \xi| \rightarrow +\infty $.  Similarly, we have
 \begin{align}\label{ineq:1-2'}
\Big| \int_{ S_h} e^{\xi\cdot (\bx-\bx_0)}\mathrm{dx}\Big|&\leq \Big| \int_{ S_h} e^{-\delta_0|\Re\xi||\bx-\bx_0|}\mathrm{dx} \leq\sigma(\mathbb{S}^{n-1})\int^h_{0}e^{-\delta_0|\Re\xi|r}r^{n-1}\mathrm{dr}\nonumber\\
&\leq \sigma(\mathbb{S}^{n-1})\int^{h\delta_0|\Re\xi|r}_0 e^{-t}t^{n-1}(\delta_0|\Re\xi|)^{-n}\mathrm{dt}\nonumber\\
&\leq \sigma(\mathbb{S}^{n-1})(\delta_0|\Re\xi|)^{-n}\gamma(n,\delta_0|\Re\xi|h)\nonumber\\
&\leq C'_{\delta_0,n}|\Re\xi|^{-n}
    \end{align}
as $|\Re \xi| \rightarrow +\infty $, where $\sigma(\mathbb{S}^{n-1})$ is the measure of $\mathbb{S}^{n-1}$.  For scalar functions $f$ and $g$, which admit $|f|\leq A|\bx-\bx_0|^B$ with  $A$ and $B$  being positive constants,  and $g\in L^q(S_h)$ with $1/p+1/q=1$. Combining the H\"{o}lder inequality, \eqref{ineq:delta_0}, \eqref{defi:gamma} with \eqref{ineq:gamma}, we obtain
\begin{align}\label{ineq:1-2}
\Big|  \int_{S_h} e^{\xi\cdot (\bx-\bx_0)}f\,g\mathrm{dx}\Big|&\leq A\Big(\int_{S_h} e^{\Re\xi\cdot(\bx-\bx_0)p}|\bx-\bx_0|^{Bp}\mathrm{dx} \Big)^{\frac{1}{p}}\,\|g\|_{L^{q}(S_h)}\nonumber\\
&\leq A\,\sigma(\mathbb{S}^{n-1})^{\frac{1}{p}}(\delta_0p|\Re\xi|)^{-B-n/p}\big(\gamma(Bp+n,\,\delta_0 p|\Re\xi|h)\big)^{\frac{1}{p}}\,\|g\|_{L^{q}(S_h)}\nonumber\\
&\leq C_{A,B,\delta_0,p}|\Re\xi|^{-B-n/p}\,\|g\|_{L^{q}(S_h)}
\end{align}
as $|\Re \xi| \rightarrow +\infty $. By similar arguments, we can derive the following estimations  for vector-type functions $\bff$  and $\bg$ satisfying $|\bff|\leq A|\bx-\bx_0|^B$  with  $A$ and $B$  being positive constants and $\bg\in L^q(S_h)^n$ with $1/p+1/q=1$,
\begin{align}\label{ineq:1-3}
\Big|  \int_{S_h} e^{\xi\cdot (\bx-\bx_0)}\,\bff\cdot \bg\,\mathrm{dx}\Big|&\leq A\Big(\int_{S_h} e^{\Re\xi\cdot(\bx-\bx_0)p}|\bx-\bx_0|^{Bp}\mathrm{dx} \Big)^{\frac{1}{p}}\,\,\|\bg\|_{L^q(S_h)^n}\nonumber\\
&\leq A\,\sigma(\mathbb{S}^{n-1})^{\frac{1}{p}}(\delta_0p|\Re\xi|)^{-B-n/p}\gamma(Bp+n,\,\delta_0 p|\Re\xi|h)^{\frac{1}{p}}\,\,\|\bg\|_{L^q(S_h)^n}\nonumber\\
&\leq C'_{A,B,\delta_0,p}|\Re\xi|^{-B-n/p}\,\|\bg\|_{L^q(S_h)^n}
\end{align}
as $|\Re \xi| \rightarrow +\infty $.

By virtue of  \eqref{eq:rho expan} and \eqref{eq:u expan}, from \eqref{ineq:1-1'}-\eqref{ineq:1-3}, we can deduce that
\begin{subequations}
\begin{align}
&R_1:=\Big|\int_{\Acal\backslash S_h} e^{\xi\cdot (\bx-\bx_0)}\eta\cdot \bu(\bx_0) \mathrm{dx}\Big |\leq C_{\delta_0,n}|\bu(\bx_0)||\Re\xi|^{-n}e^{-\delta_0|\Re\xi|\frac{h}{2}},\label{estima:1-1}\\
&R_2:=\Big|\int_{S_h} e^{\xi\cdot (\bx-\bx_0)}\eta\cdot \bu(\bx_0) \mathrm{dx}\Big |\leq C'_{\delta_0,n}|\bu(\bx_0)||\Re\xi|^{-n},\label{estima:1-1'}\\
&R_3:=\Big|\int_{S_h}e^{\xi\cdot(\bx-\bx_0)} (\rho_1+1)\,\eta\cdot \bu(\bx_0) \mathrm{dx} \Big|\leq C_{\Rcal,n,\delta_0}|\bu(\bx_0)|\big|\Re\xi\big|^{-n-\theta_1},\label{estima:1-2}\\
&R_4:=\Big| \int_{S_h}e^{\xi\cdot(\bx-\bx_0)}(\rho-1)\eta\cdot\bu_1\mathrm{dx}\Big|\leq C_{\Mcal,\Rcal,n,\delta_0}|\Re\xi|^{-n-\theta_2}\label{estima:1-3}.
\end{align}
\end{subequations}
%

For the boundary integral term in the right-hand side of \eqref{ineq:core identity}, we divide the integration area $\partial D_h$ into two parts $\partial \Bcal\cap B_h(\bx_0)$ and ${\mathcal S}:= \partial S_h(\bx_0) \backslash  ( \partial \Bcal\cap B_h(\bx_0))  $. For the boundary integral over $\partial \Bcal\cap B_h(\bx_0)$, the Cauchy-Schwarz inequality implies that
{
\begin{align}\label{ineq:first part 1_1}
I_1:&=\Big|\int_{\partial\Bcal\cap B_h(\Bx_0)} \bu_0\cdot T_{\hat{\nu}}(\bu'-\bu)-(\bu'-\bu)T_{\hat{\nu}}(\bu_0)  \mathrm{d\sigma}\Big|\nonumber\\
&\leq \sqrt{\sigma(\partial\Bcal\cap B_h(\bx_0))}\,\,\Big(\int_{\partial\Bcal\cap B_h(\bx_0)}\big|\bu_0\cdot T_{\hat{\nu}}(\bu'-\bu)-(\bu'-\bu)T_{\hat{\nu}}(\bu_0)\big|^2\mathrm{d\sigma}\Big)^{1/2}\nonumber\\
&\leq\sqrt{\sigma(\partial\Bcal\cap B_h(\bx_0))}\,\Big[\int_{\partial\Bcal\cap B_h(\bx_0)}\big(| \bu_0|+|T_{\hat{\nu}}(\bu_0)|\big)^2\mathrm{d\sigma}\,\Big]^{1/2}\,T_1,
\end{align}}where $T_1=\sup\limits_{\partial \Bcal\cap B_h(\bx_0)}\big\{ |T_{\hat{\nu}}(\bu'-\bu)|,|\bu-\bu'|\big\}.$
By virtue of \eqref{eq:surface traction} and \eqref{eq:cgo}, one can directly derive that
\begin{align*}
T_{\hat{\nu}}(\bu_0)=\mu e^{\xi\cdot (\bx-\bx_0)}\big[(\xi\cdot \hat{\nu})\, \eta+(\eta\cdot \hat{\nu})\,\xi\big]\quad\mbox{and}\quad \big|  T_{\hat{\nu}}(\bu-\bu')\big|\leq C_{\lambda,\mu}|\nabla (\bu-\bu')|,
\end{align*}
where $\nu$ is the outward unit normal vector to $\partial \Bcal\cap B_h(\bx_0)$ and $C_{\lambda,\mu}$ is a positive constant depending on $\lambda$ and $\mu$. From \eqref{ineq:first part 1_1}, we obtain that
{
\begin{align}\label{ineq:first part 1_2}
\quad\quad \quad\quad I_1&\leq\sqrt{\sigma(\partial\Bcal\cap B_h(\bx_0))}\,\Big[\int _{\partial\Bcal\cap B_h(\bx_0)}\Big( |e^{\xi\cdot (\bx-\bx_0)}| +\big|\mu e^{\xi\cdot (\bx-\bx_0)}(\xi\cdot \hat{\nu})\eta \notag \\
&\hspace{5cm} +(\eta\cdot \hat{\nu})\xi\big| \big)^2\mathrm{d\sigma}\Big]^{1/2}\,T_1\nonumber\\
&\leq \sqrt{\sigma(\partial\Bcal\cap B_h(\bx_0))}\, \Big[\int _{\partial\Bcal\cap B_h(\bx_0)}\big( |\eta|+\mu|\eta(\xi\cdot \hat{\nu})+\xi(\eta\cdot \hat{\nu})|    \big)^2\mathrm{d\sigma}\Big]^{1/2}\,T_1\nonumber\\[1mm]
&\leq C'_{\lambda,\mu}\,\,\,\sigma(\partial\Bcal\cap B_h(\bx_0))\,\sup\limits_{\partial\Bcal\cap B_h(\bx_0)}\big\{ |\nabla(\bu-\bu')|,|\bu-\bu'|\big\},
\end{align}}where $C'_{\lambda,\mu}$ is a positive constant depending on $\lambda$ and $\mu$.
There exist finite half-spaces $\big\{H_j\big\}^N_{j=1}$ through $\bx_0$ with a codimension of $1$ such that $$\partial \Bcal\cap B_h(\bx_0)\subset \bigcap\limits^{N}_{j=1}\big( H_j\cap B_h(\bx_0)\big).$$
Hence, we have
\begin{equation}\label{eq:sigma line}
	\sigma (\partial\Bcal\cap B_h(\bx_0))\leq C_{N,n}\sigma (\partial H_j\cap B_h(\bx_0)) \leq C_{N,n}\,\,h^{n-1},
\end{equation}
where $C_{N,n}$ is a positive constant depending only $n$ and $N$. Substituting \eqref{eq:sigma line} into \eqref{ineq:first part 1_2}, one has
\begin{equation}\label{eq:423}
	I_1 \leq C h^{n-1} \sup\limits_{\partial\Bcal\cap B_h(\bx_0)}\big\{ |\nabla(\bu-\bu')|,|\bu-\bu'|\big\},
\end{equation}
where $C$ is a a-priori positive constant depending on $\lambda$, $\mu$, $N$ and $n$.

For the boundary integral over ${\mathcal S}=\partial S_h(\bx_0) \backslash  ( \partial \Bcal\cap B_h(\bx_0)) $, by the Cauchy-Schwarz inequality and the fact that $\sigma(\Bcal\cap S_h(\bx_0))\leq C h^{n-1}$, where $C$ is a positive constant depending on $n$ and $\pi$, one has
\begin{align}\label{eq:424}
\quad\quad\quad\quad I_2:&=\Big| \int_{{\mathcal S}} \bu_0\,T_{\hat{\nu}}(\bu-\bu') -(\bu-\bu')\,T_{\hat{\nu}}(\bu_0) \mathrm {d\sigma}\Big|\nonumber\\
&\leq \sqrt{\sigma({\mathcal S})}\,\,\big(\int_{{\mathcal S}}\big|\bu_0\cdot T_{\hat{\nu}}(\bu'-\bu)-(\bu'-\bu)T_{\hat{\nu}}(\bu_0)\big|^2\mathrm{d\sigma}\,\big)^{1/2}\,\,T_2\nonumber\\
&\leq \sqrt{\sigma({\mathcal S})}\,\,\big(\int_{{\mathcal S}}(\big|\bu_0|+ |T_{\hat{\nu}}(\bu_0)|\big)^2\mathrm{d\sigma}\,\big)^{1/2}\,\,T_3\nonumber\\
&\leq C'_{\lambda,\mu,n}\,\,\sigma({\mathcal S})\,\,e^{-\delta_0|\Re\xi|h}\,\,T_3\nonumber\\[2pt]
&\leq C'_{\lambda,\mu,n} \,\,h^{n-1}\,e^{-\delta_0|\Re\xi|h}\,\,T_4,
\end{align}
where
\begin{align*}
T_2&=\sup\limits_{{\mathcal S}}|\nabla(\bu-\bu')|+\sup\limits_{{\mathcal S}}|\bu-\bu'|,\\[2pt]
T_3&=\|\bu_\mathrm s-\bu'_\mathrm s\|_{C^1(\overline{B_h(\bx_0)})^n} +\big\|\bu_\mathrm p-\bu'_\mathrm p\|_{C^1(\overline{B_h(\bx_0)})^n},\\[5pt]
T_4&=\|\bu_\mathrm s\|_{H^2(B_{2R})^n}+\|\bu_\mathrm p\|_{H^2(B_{2R})^n}+\|\bu'_\mathrm s\|_{H^2(B_{2R})^n}+\|\bu'_\mathrm p\|_{H^2(B_{2R})^n}.
\end{align*}
Finally, it is easy to see that
\begin{equation}\label{eq:425}
	C\big|(\rho(\bx_0)-1)\int_{\Acal} e^{\xi\cdot (\bx-\bx_0)}\eta\cdot \bu(\bx_0)\mathrm{dx}\big|\leq R_1+R_2+R_3+I_1+I_2,
\end{equation}
where $C=C\big(\kappa_\mathrm p,\kappa_\mathrm s, \Mcal, \Rcal, \theta_1,\theta_2, n,\delta_0\big)>0$. Substituting \eqref{estima:1-1} to \eqref{estima:1-3}, \eqref{eq:423} and \eqref{eq:424} into \eqref{eq:425}, we prove \eqref{ineq:core estimate}.
\end{proof}

\section{Proof of Theorem \ref{th:main1} }\label{sec:proTh1}
In this section, we shall give the proof of Theorem \ref{th:main1}.

\begin{proof}[Proof of Theorem \ref{th:main1}]
 Let $\bx_0$ be a vertex of $\Omega$ such that $\mathrm{dist}(\bx_0,\Omega')=\mathrm{dist}_{\Hcal}(\Omega,\Omega')$ and $Q$ be the convex hull of $\Omega$ and $\Omega'$. From Lemma \ref{lem:convex hull_2D} and Lemma \ref{lem:convex hull 3D}, we know $\bx_0$ must be a vertex of $Q$. Let $\Bcal$ be the open polyhedral cone generated by $Q$ at $\bx_0$.\,Then there must be an open spherical cone $\Dcal$ whose opening angle at $\bx_0$ is in $(2\alpha_M,\,\pi)\subset(2\alpha_m,\, \pi)$. Let $\Acal$ denote the open polyhedral cone generated by $\Omega$ at $\bx_0$ and $\Ccal$ be an open spherical cone such that $(\Acal,\Ccal)\in \Fcal(\alpha_m,\alpha_M,n)$ (cf. Definition \ref{def:cone_pair}). Denote $\hbar=\mathrm{dist}(\bx_0,\Omega')<1$ and choose $h\leq\min\{l_0,\hbar\}$ such that
 $$
  \begin{cases}
  \Delta^*\bu'+\omega^2\bu'={\bf 0} \quad \mbox{in}\quad S_h\subset D_h,\\
  S_h\cap \Omega'=D_h\cap \Omega'=\emptyset,
  \end{cases}
 $$
where $S_h=\Omega\cap B_h(\bx_0)=\Acal\cap B_h(\bx_0)$ and $D_h=Q\cap B_h(\bx_0)=\Bcal\cap B_h(\bx_0)$.
From the proof of Proposition \ref{pro:estimate on boundary}, we can derive that
 $$
 \|\bu_\mathrm \alpha\|_{H^2(B_{2R})^n},\,\|\bu'_\mathrm \alpha\|_{H^2(B_{2R})^n}\leq C_{\kappa_\mathrm p,\kappa_\mathrm s, R,\Scal}, \quad \mathrm\alpha=\mathrm p,\mathrm s.
 $$
 Thus
 $$
 \bu\in H^2(B_{2R})^n\,\, \mbox{and }\,\, \bu'\in H^2(B_{2R})^n.
 $$
 The a-priori admissibility assumptions of $\rho$ and the local $H^2$-regularity of $\bu$ near $S_h$ imply the following splittings
 \begin{align*}
&\rho(\bx)=1+\rho(\bx_0)+\rho_1(\bx),\quad|\rho_1(\bx)|\leq \|\rho_1\|_{C^{\theta_1}( K\cap B_h(\bx_0) )}|\bx-\bx_0|^{\theta_1},\,\,\theta_1\in(0,1),\\
&\bu(\bx)=\bu(\bx_0)+\bu_1(\bx),\,\,\quad\quad |\bu_1(\bx)|\leq \|\bu_1\|_{C^{\theta_2}( K\cap B_h(\bx_0) )^n} |\bx-\bx_0|^{\theta_2},\,\,\theta_2\in(0,1),
\end{align*}
where we use the Sobolev embedding theorem from $H^2$ to $C^{\theta_2}$.
 So the conditions of Proposition \ref{prop:important estimate} are all satisfied. We absorb all the constants into the left-hand side, which depend only on the {\it a-priori} parameters, then \eqref{ineq:core estimate} reduces to
\begin{align}\label{ineq:core estimate2}
C\big|(\rho(\bx_0)-1)\int_{\Acal} e^{\xi\cdot (\bx-\bx_0)}\eta\cdot \bu(\bx_0)\mathrm{dx}\big|&\leq |\Re\xi|^{-n}e^{-\delta_0|\Re\xi|h/2}+|\Re\xi|^{-n-\min\{\theta_1,\theta_2\}}\nonumber\\
&+h^{n-1}\sup_{\partial \Bcal\cap B_h(\bx_0)}\Big( |\bu-\bu'|+|\nabla \bu-\nabla \bu'|  \Big)^{-1/2}\nonumber\\
&+h^{n-1}e^{-\delta_0|\Re\xi|h}.
\end{align}
From Proposition \ref{pro:estimate on boundary} and the fact that $\partial Q\supset\partial\Bcal\cap B_h(\bx_0)$, we have
\begin{align*}
\sup_{\partial\Bcal\cap B_h(\bx_0)}\Big( |\bu-\bu'|+|\nabla \bu-\nabla \bu'|  \Big)\leq C'\left(\ln\ln (\Ncal/\varepsilon)\right)^{-1/2}.
\end{align*}
Let $\small \delta(\varepsilon)= \left(\ln\ln (\Ncal/\varepsilon)\right)^{-1/2}$, where $C'$ depends only on those {\it a-priori} parameters.\\
On the other hand, note that $(\Acal,\Ccal)\in\Fcal(\alpha_m,\alpha_M,n)$. We can apply Proposition \ref{prop:cgo} to obtain an estimate of the term on the left-hand side of \eqref{ineq:core identity} as follows
\begin{equation}\label{eq:lowerbound+CGO}
\Big| \int_{\Acal} e^{\xi(\tau)\cdot(\bx-\bx_0)}\,\eta\cdot \bu(\bx_0)\mathrm dx        \Big|\geq C'_{\Acal}\tau^{-n},
\end{equation}
where $C'_{\Acal}=C'_{\Acal}(\Acal,\bp,\bu(\bx_0))>0$.

From Definition \ref{defin:admissible class}, we have $|\rho(\bx_0)-1|\neq 0$. Combining \eqref{ineq:core estimate2} with \eqref{eq:lowerbound+CGO}, we have
\begin{align*}
C_1\big|(\rho(\bx_0)-1)\big|&\leq e^{-\delta_0|\Re\xi|h/2}+\tau^{-\min\{\theta_1,\theta_2\}}+h^{n-1}\tau^n\delta(\varepsilon)+h^{n-1}\tau^n e^{-\delta_0|\Re\xi|h},
\end{align*}
where $C_1$ is a positive constant depending only on those {\it a- priori} parameters and $\bu(\bx_0)$.
Using the fact that $e^{-\bx}\leq \frac{1}{\bx}$ and $e^{-\bx}\leq\frac{(n+4)!}{\bx^{n+4}}$ for all $\bx>\bf0$, we get
\begin{align*}
C_2\big|(\rho(\bx_0)-1)\big|&\leq \tau^{-1}h^{-1}+\tau^{-\min\{\theta_1,\theta_2\}}+h^{n-1}\tau^{n+1}\delta(\varepsilon)+h^{-5}\tau^{-3} \\
&\leq C_0(\tau^{-m}h^{-5}+\tau^{n+1}\delta(\varepsilon)h^{n-1})\\
&\leq C_0 h^{n-1}[\tau^{-m}h^{-n-4}+\tau^{n+1}\delta(\varepsilon)].
\end{align*}
Here, $m=\min\{1, \theta_1,\theta_2\}$. Note that $\tau^4>1$, $h^{-n}>1$ and $0<h^{n-1}<1$. Dividing the both sides of the above inequality by $h^{n-1}$, we obtain the following inequality:
\begin{equation}\label{eq:importan ineq8-1}
C_3\big|(\rho(\bx_0)-1)\big|\leq \tau^{-m}h^{-n-5}+\tau^{n+5}\delta(\varepsilon).
\end{equation}
To determinate a minimum modulo constants of the right-hand side of \eqref{eq:importan ineq8-1}. Set
\begin{equation}\label{eq:tau_0}
\tau=\tau_e=\Big[\frac{1}{h^{n+5}\delta(\varepsilon)}\Big]^{\frac{1}{m+n+1}},
\end{equation}
we have
\begin{equation}\label{eq:importan ineq8-5}
C_4\big|(\rho(\bx_0)-1)\big|\leq 2\,\delta(\varepsilon)^{\frac{m}{m+n+5}}\,h^{-\frac{(n+5)^2}{m+n+5}}.
\end{equation}
Note that $\tau_e\geq\tau_e \,h^{\frac{n+5}{m+n+5}}=\delta(\varepsilon)^{-\frac{1}{m+n+5}}\geq\tau_0$ and $h\leq\hbar\leq l_0$ for sufficiently small $\varepsilon$.
From \eqref{eq:importan ineq8-5}, we have
\begin{align*}
\hbar&\leq C_5\,\delta(\varepsilon)^{\frac{m}{m+n+5}}\big|\rho(\bx_0)\big|^{-\frac{m}{(n+5)^2}-\frac{1}{n+5}}
\leq C_5 \, \delta(\varepsilon)^{\frac{m}{m+n+5}}\big|\epsilon_0+1\big|^{-\frac{m}{(n+5)^2}-\frac{1}{n+5}}\\
&\leq C_6\,\big(\ln\ln(\Ncal/\varepsilon)\big)^{-\frac{m}{2(n+5)^2}},
\end{align*}
 and the claim readily follows.

 The proof is complete.
\end{proof}

\section{Related results about invisibility}\label{sec4}

%
%

The present section is devoted to the technical details of Theorem~\ref{thm:main2summary}.

\begin{thm}\label{th:main2}
Let $(\Omega; \rho)$ be an {admissible polyhedral scatterer}. The plane incident wave $\bu^i$ of the form \eqref{eq:incident wave} is a nontrivial entire solution to \eqref{eq:navier equation} with $\rho=1$. Let $\bU$ be the far-field pattern of the scattered wave $\bu^s$ by medium scatterer $(\Omega;\rho)$. Then there exists a positive constant $C$ only depending on the {\it a-priori} parameters such that the following estimate holds:
\begin{align}\label{eq:lower bound of Main result}
\|\bU\|_{L^2(\mathbb{S}^{n-1},\mathbb{C}^n\times \mathbb{C}^n)}\geq C>0.
\end{align}
\end{thm}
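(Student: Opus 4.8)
The plan is to run the corner-localised machinery of Section~\ref{sub:auxiliary3} with the ``comparison'' object taken to be the empty scatterer, i.e.\ $\Omega'=\emptyset$, $\rho'\equiv 1$, so that the comparison total field is simply the incident wave $\bu^i$ and the difference field $\bu-\bu^i=\bu^s$ is exactly the scattered wave whose far-field pattern is $\bU$. Write $\varepsilon:=\|\bU\|_{L^2(\mathbb{S}^{n-1},\mathbb{C}^n\times\mathbb{C}^n)}$. Fix a vertex $\bx_0$ of $\Omega$, let $\Acal$ be the polyhedral cone generated by $\Omega$ at $\bx_0$, and choose a spherical cone $\Ccal$ with $(\Acal,\Ccal)\in\Fcal(\alpha_m,\alpha_M,n)$, which is possible by Definition~\ref{defin:admissible class}(a)--(b). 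Since $Q=\Omega$ here, the convex-hull cone $\Bcal$ of Proposition~\ref{prop:important estimate} coincides with $\Acal$ and $D_h=S_h=\Acal\cap B_h(\bx_0)$.

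First I would propagate the smallness of $\varepsilon$ from the far field down to $\partial\Omega$ near the corner. The scattered field $\bu^s$ is a radiating solution of the homogeneous Navier equation in $\RR^n\setminus\overline{\Omega}$ with $\|\bu^s\|_{H^2(B_{2R})^n}\le\Ncal$, so Proposition~\ref{pro:far-field to near field} yields smallness of $\bu^s$ on the shell $B_{7R/4}\setminus B_{5R/4}$, and then Proposition~\ref{pro:auxilary1}, applied with $Q=\Omega$ together with the $C^{1,1/2}$-regularity up to the boundary established as in the proof of Proposition~\ref{pro:estimate on boundary} (which also controls $\nabla\bu^s$), propagates it up to a neighbourhood of $\partial\Omega$. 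Exactly as in Proposition~\ref{pro:estimate on boundary}, this gives
\begin{equation}\label{eq:plan-bdry}
\sup_{\partial\Acal\cap B_h(\bx_0)}\big(|\bu-\bu^i|+|\nabla\bu-\nabla\bu^i|\big)=\sup_{\partial\Acal\cap B_h(\bx_0)}\big(|\bu^s|+|\nabla\bu^s|\big)\le C\,\delta(\varepsilon),\qquad \delta(\varepsilon):=\big(\ln\ln(\Ncal/\varepsilon)\big)^{-1/2}.
\end{equation}

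Next I would invoke the corner estimate. Specialising Proposition~\ref{prop:important estimate} to $\bu'=\bu^i$, the boundary term on $\partial\Bcal\cap B_h(\bx_0)=\partial\Acal\cap B_h(\bx_0)$ is controlled by the left-hand side of \eqref{eq:plan-bdry}, so that \eqref{ineq:core estimate} becomes, with $m=\min\{1,\theta_1,\theta_2\}$ and $\tau=|\Re\xi|$,
\begin{equation}\label{eq:plan-core}
C\Big|(\rho(\bx_0)-1)\int_{\Acal}e^{\xi\cdot(\bx-\bx_0)}\eta\cdot\bu(\bx_0)\,\mathrm{d}x\Big|\le \tau^{-n}e^{-\delta_0\tau h/2}+\tau^{-n-m}+h^{n-1}\delta(\varepsilon)+h^{n-1}e^{-\delta_0\tau h}.
\end{equation}
For the left-hand side I would use Proposition~\ref{prop:cgo}: because $\eta\cdot\bu(\bx_0)$ is constant the integral factors, and the admissibility hypotheses guarantee $|\rho(\bx_0)-1|\ge\epsilon_0$ (Definition~\ref{defin:admissible class}(d)) and $\bu(\bx_0)\neq\mathbf 0$ (Definition~\ref{defin:admissible well-possed and non-vainishing}), whence $|\bu(\bx_0)\cdot\eta|\ge C_0>0$ by \eqref{eq:lower bound of CGO2} and $\big|\int_{\Acal}e^{\xi\cdot(\bx-\bx_0)}\,\mathrm{d}x\big|\ge C\tau^{-n}$. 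This bounds the left-hand side of \eqref{eq:plan-core} below by $\gtrsim \epsilon_0 C_0\,\tau^{-n}$.

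Finally I would balance the parameters. Dividing \eqref{eq:plan-core} by $\tau^{-n}$ turns it into $\epsilon_0 C_0 C\le C e^{-\delta_0\tau h/2}+C\tau^{-m}+Ch^{n-1}\tau^{n}\delta(\varepsilon)+Ch^{n-1}\tau^{n}e^{-\delta_0\tau h}$; fixing $h\le l_0$ and then choosing $\tau=\tau_0$ large enough (depending only on the a-priori data) to absorb the first, second and fourth terms into $\tfrac12\epsilon_0 C_0 C$ leaves $\tfrac12\epsilon_0 C_0 C\le Ch^{n-1}\tau_0^{n}\delta(\varepsilon)$. Since $h$ and $\tau_0$ are now fixed this forces $\delta(\varepsilon)\ge c>0$; but $\delta(\varepsilon)=(\ln\ln(\Ncal/\varepsilon))^{-1/2}\to 0$ as $\varepsilon\to 0$, so the whole chain, valid whenever $\varepsilon$ lies below the smallness threshold required by Propositions~\ref{pro:far-field to near field} and \ref{pro:auxilary1}, is impossible for $\varepsilon$ too small; inverting $\delta(\varepsilon)\ge c$ produces the explicit lower bound $\|\bU\|=\varepsilon\ge C>0$ with $C$ depending only on the a-priori parameters. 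I expect the main obstacle to be the second step: carrying the smallness all the way to the genuine scatterer boundary $\partial\Omega$ (rather than to the convex-hull boundary of two comparable domains as in Theorem~\ref{th:main1}) while retaining control of $\nabla\bu^s$ up to $\partial\Omega$, together with the indispensable use of the non-vanishing assumption $\bu(\bx_0)\neq\mathbf 0$, without which the CGO lower bound \eqref{eq:lower bound of CGO2} collapses and the whole argument breaks down.
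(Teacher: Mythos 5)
Your proposal is correct and follows essentially the same route as the paper: the authors likewise set $\Omega'=\emptyset$, $\rho'\equiv 1$ (so $\bu'=\bu^i$, $\bU'\equiv\mathbf 0$, and $Q=\Omega$), propagate the smallness of $\varepsilon$ to $\partial\Omega$ via Propositions~\ref{pro:far-field to near field}--\ref{pro:estimate on boundary}, and combine Proposition~\ref{prop:important estimate} with the CGO lower bounds of Proposition~\ref{prop:cgo} at a vertex, using $|\rho(\bx_0)-1|\ge\epsilon_0$ and $\bu(\bx_0)\neq\mathbf 0$. The only cosmetic difference is the final balancing: you fix $\tau=\tau_0$ large and deduce $\delta(\varepsilon)\ge c$, while the paper substitutes the optimized $\tau_e=[l^{-(n+5)}\delta(\varepsilon)^{-1}]^{1/(m+n+1)}$ and solves directly for the explicit double-exponential lower bound on $\varepsilon$; both yield the claimed constant.
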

In fact, $\Omega\subset \mathbb{R}^{n}$, $n=2,3$, can be relaxed to a general bounded Lipschitz domain, which admits a convex polygonal/polyhedral point on its boundary, namely, there exits $\bx_c\in \partial \Omega$ such that $\Omega\cap B_l(\bx_c)\subset \mathbb{R}^2$ is a plane sector for $l_0>l>0$ and $\Omega\cap B_l(\bx_c)\subset \mathbb{R}^3$ is a polyhedral cone with a spherical bottom surface for $l_0>l>0$. The general result is stated as follows.
\begin{thm}\label{th:main2'}
Let $\Omega\subset \mathbb{R}^{n}$, $n=2,3$, be a bounded Lipschitz domain, with a convex polygonal/polyhedral point $\bx_c\in \partial \Omega$. Let $\rho(\bx)$ be a uniformly $\theta$-H\"{o}lder continuous function in $\Omega$, $0<\theta\leq1$. The plane incident wave $\bu^i$ of the form \eqref{eq:incident wave} is a nontrivial entire solution to \eqref{eq:navier equation} with $\rho=1$. Let $\bU$ be the far-field pattern of the scattered wave $\bu^s$ by medium scatterer $(\Omega;\rho)$. Suppose that the following conditions are fulfilled:
\begin{itemize}
\item[{\rm (1)}] $\Omega=\mathrm{supp}(1-\rho)\subset \overline{B_R}$ for some $R>l>0$;
\item[{\rm (2)}]  $\|\bu^s\|_{H^2(B_R)^n}\leq \Ncal$ for $\Ncal>0$;
\item[{\rm (3)}] $|\rho(\bx_c)-1| \geq\epsilon_0>0$ and $\|\rho\|_{C^{\theta}(\Omega)}\leq \Mcal$ for some $\Mcal>0$;
\item[{\rm (4)}] For $n=2$, $0<2\alpha_m\leq \alpha\leq 2\alpha_M \leq \pi$, where $\alpha$ signifies the opening angle at $\bx_c$.
\end{itemize}
Then the following estimate holds:
\begin{align}\label{eq:lower bound of Main result'}
\|\bU\|_{L^2(\mathbb{S}^{n-1},\mathbb{C}^n\times \mathbb{C}^n)}\geq C>0,
\end{align}
where $C$ is a positive constant, which depends on the {\it a-priori} parameters $\theta, l, \Ncal, \Mcal, \epsilon_0$, and $R$.
\end{thm}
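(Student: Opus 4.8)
The plan is to localize the entire analysis at the convex corner $\bx_c$ and to re-run the machinery developed for Theorem~\ref{th:main1}, but now with the \emph{single} scattered field $\bu^s=\bu-\bu^i$ playing the role previously played by the difference $\bu-\bu'$, and with the ``second scatterer'' taken to be the empty background. Concretely, I would combine a lower bound for the volume integral of a CGO solution against $\bu$ near $\bx_c$ (which forces the Cauchy data of $\bu^s$ to be non-trivial on the faces of the corner) with the propagation of smallness from the far field (which bounds that same Cauchy data from above by a double-logarithmic modulus of $\|\bU\|$). Matching the two inequalities then pins $\|\bU\|$ from below. Throughout, the point is to keep every constant dependent only on the a-priori parameters $\theta,l,\Ncal,\Mcal,\epsilon_0,R$, so that the resulting bound is genuinely uniform.

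First I would fix a small $h\le l$ so that $S_h:=\Omega\cap B_h(\bx_c)$ coincides with $\Acal\cap B_h(\bx_c)$, where $\Acal$ is the infinite convex cone generated by $\Omega$ at $\bx_c$. Since the opening angle at $\bx_c$ is strictly less than $\pi$ (condition (4) for $n=2$, and in $\RR^3$ a convex polyhedral cone always fits inside a spherical cone of opening $<\pi$), one can enclose $\Acal$ in a spherical cone $\Ccal$ with $(\Acal,\Ccal)\in\Fcal(\alpha_m,\alpha_M,n)$ and invoke Proposition~\ref{prop:cgo}: this produces $\bp,\bp^\perp$, hence $\xi,\eta$, such that $\bp\cdot(\bx-\bx_c)\le-\delta_0|\bx-\bx_c|$ on $\Acal$, the volume lower bound $|\int_{\Acal}e^{\xi\cdot(\bx-\bx_c)}\,\mathrm dx|\ge C\tau^{-n}$ of \eqref{eq:lower bound of CGO}, and $|\bu(\bx_c)\cdot\eta|\ge C_0>0$ of \eqref{eq:lower bound of CGO2}, provided $\bu(\bx_c)\neq\mathbf 0$. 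Next I would apply the Betti identity of Proposition~\ref{prop:intergal identity} on $S_h$ with this $\bu_0$, insert the Hölder splittings \eqref{eq:rho expan}--\eqref{eq:u expan} at $\bx_0=\bx_c$, and estimate exactly as in the proof of Proposition~\ref{prop:important estimate}: the interior remainders are controlled by the incomplete-Gamma bounds, the spherical-cap part of $\partial S_h$ is exponentially small in $\tau$ because $\bu_0$ decays there, and the flat faces $\Gamma_h=\partial\Acal\cap B_h(\bx_c)$ contribute only the Cauchy data of $\bu^s=\bu-\bu^i$. This yields the single-scatterer analogue of \eqref{ineq:core estimate},
\begin{equation*}
C\,\epsilon_0\,C_0\,\tau^{-n}\le \tau^{-n}e^{-\delta_0\tau h/2}+\tau^{-n-m}+C_1\sup_{\Gamma_h}\big(|\bu^s|+|\nabla\bu^s|\big)+C_2\,h^{n-1}e^{-\delta_0\tau h}\Ncal,
\end{equation*}
where $m=\min\{\theta_1,\theta_2\}$ and the left-hand side used $|\rho(\bx_c)-1|\ge\epsilon_0$ together with the two lower bounds above.

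Then I would \emph{fix} $\tau=\tau^*$ large, depending only on the a-priori parameters, so that the first, second and fourth terms on the right are together at most $\tfrac12 C\epsilon_0 C_0\tau^{-n}$; this forces
\begin{equation*}
\sup_{\Gamma_h}\big(|\bu^s|+|\nabla\bu^s|\big)\ge c_*>0,
\end{equation*}
with $c_*$ again depending only on the a-priori parameters. Finally I would bound the same quantity from above by propagation of smallness. Since $\bu^s$ solves the homogeneous Navier equation in the exterior $\RR^n\setminus\overline\Omega$ and radiates, the far-field-to-near-field estimate of Proposition~\ref{pro:far-field to near field} followed by the three-spheres chaining to the boundary (the single-field specialisation of Proposition~\ref{pro:estimate on boundary}, using that near the convex corner the exterior is ``fat'' enough to carry a chain of balls up to $\Gamma_h$) gives $\sup_{\Gamma_h}(|\bu^s|+|\nabla\bu^s|)\le C(\ln\ln(\Ncal/\|\bU\|))^{-1/2}$. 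Comparing this with the lower bound $c_*$ yields $\ln\ln(\Ncal/\|\bU\|)\le (C/c_*)^2$, i.e.\ $\|\bU\|\ge \Ncal\exp(-\exp((C/c_*)^2))=:C>0$, which is the asserted estimate. The hypothesis $\bu(\bx_c)\neq\mathbf 0$ needed above is not restrictive: either it is built into the admissibility of the scatterer, or else it follows from the upper bound just quoted, since if $\|\bU\|$ is small then $|\bu^s(\bx_c)|$ is small and $\bu(\bx_c)=\bu^i(\bx_c)+\bu^s(\bx_c)$ cannot vanish because $|\bu^i(\bx_c)|^2=|\alpha_1|^2+|\alpha_2|^2>0$ by \eqref{eq:incident wave} (and there is nothing to prove when $\|\bU\|$ is not small).

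I expect the main obstacle to be this last upper bound: propagating the smallness of the far field all the way to the Cauchy data of $\bu^s$ on the corner faces $\Gamma_h$, \emph{including the tip} $\bx_c$, while keeping all constants uniform. This requires decomposing $\bu^s$ into its longitudinal and transversal Helmholtz components, chaining the three-spheres inequality of Lemma~\ref{lem:three-sphere inequality} through the exterior of a merely Lipschitz domain, and upgrading to $C^1$-control up to the boundary near the corner via interior elliptic regularity for the constant-coefficient Navier operator---exactly the points where Propositions~\ref{pro:far-field to near field}--\ref{pro:estimate on boundary} must be re-derived for a single field and a general domain rather than for the difference of two fields across the convex hull.
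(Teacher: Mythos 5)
Your proposal is correct and follows essentially the same route as the paper's proof: take the second scatterer to be empty (so $\bu'=\bu^i$ and $\bu-\bu'=\bu^s$), run the corner CGO/Betti machinery of Propositions~\ref{prop:cgo} and \ref{prop:important estimate} on $S_h=\Omega\cap B_h(\bx_c)$ with $h=l$, and close the loop with the far-field-to-boundary propagation of smallness of Proposition~\ref{pro:estimate on boundary}. The only differences are in the endgame and are minor: you fix $\tau=\tau^*$ independent of $\varepsilon$ to absorb the $\tau$-decaying remainders and then contradict the smallness of the Cauchy data, whereas the paper substitutes $\delta(\varepsilon)=(\ln\ln(\Ncal/\varepsilon))^{-1/2}$ first and optimizes with the $\varepsilon$-dependent choice $\tau_e=[l^{-(n+5)}\delta(\varepsilon)^{-1}]^{1/(m+n+1)}$ exactly as in Theorem~\ref{th:main1} (both give the same double-exponential constant); in addition, you explicitly dispose of the tacit hypothesis $\bu(\bx_c)\neq\mathbf{0}$ by a dichotomy on $\|\bU\|$ using $|\bu^i(\bx_c)|^2=|\alpha_1|^2+|\alpha_2|^2>0$, a point the paper's statement of Theorem~\ref{th:main2'} leaves implicit.
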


\begin{proof}
We follow similar arguments in the proof of Theorem \ref{th:main1} with some necessary modifications.
We take $\Omega'=\emptyset$ and $\rho'\equiv1$ in $\mathbb{R}^n$. Clearly, the incident wave $\bu^i$ of the form \eqref{eq:incident wave} is an entire solution to $\Delta^*\bu'+\omega^2 \bu'=\bf 0$, which yields $\bu'^s\equiv\bf0$ and $\bU'\equiv\bf0$. Denote $\varepsilon=\|\bU\|_{L^2(\mathbb{S}^{n-1},\mathbb{C}^n\times \mathbb{C}^n)}$. In fact, we only consider the effect caused by $\bx_c$ in its micro-local area $\Omega\cap B_h(\bx_c)$, and therefore we can reduce the corresponding reasoning in the proof Theorem \ref{th:main1}.
Here we take $h=l$. After a series of derivations similar to Theorem \ref{th:main1}, we can obtain the following inequality
\begin{align}\label{ineq:8-5'}
C_1\,\big|(\rho(\bx_c)-1)\big|\leq \tau^{-m}\,l^{-5}+\tau^{n+1}\delta(\varepsilon)\,l^{n-1},
 \end{align}
 where $\delta(\varepsilon)=\left(\ln\ln (\Ncal/\varepsilon )\right)^{-1/2}$ and $C_1>0$ does not depend on $l$.

Set
$$\tau=\tau_e=\Big[\frac{1}{l^{n+5}\delta(\varepsilon)}\Big]^{\frac{1}{m+n+1}},$$
we have
\begin{align}\label{ineq:8-5555'}
C_2\big|(\rho(\bx_c)-1)\big|\leq 2\,\delta(\varepsilon)^{\frac{m}{m+n+5}}\,l^{-\frac{(n+5)^2}{m+n+5}}.
 \end{align}
It is straightforward to verify that for sufficient small $\varepsilon$, one has
$$\tau_e\geq \tau_e\, l^{\frac{n+5}{m+n+5}}=\frac{C_3}{\delta(\varepsilon)^{m+n+5}}=C_3(\ln\ln(\Ncal/\varepsilon))^{\frac{(n+5)^2}{2(m+n+5)}}\geq \tau_0,$$
Using $\tau=\tau_e$ in \eqref{ineq:8-5555'}, $ |\rho(\bx_c)-1|\geq\epsilon_0>0$ and $\delta(\varepsilon)=\left(\ln\ln (\Ncal/\varepsilon )\right)^{-1/2}$,
\begin{align}\label{ineq:solve h}
\varepsilon&\geq \Ncal\Big[\exp\exp \big\{  C_3 \,l^{-\frac{(n+5)^2}{m}}|(\rho(\bx_c)-1)|^{-\frac{m+n+5}{m}} \big\} \Big]^{-1}\nonumber\\
&\geq\Ncal\Big[\exp\exp \big\{  C_3 \,l^{-\frac{(n+5)^2}{m}}\epsilon_0^{-\frac{m+n+5}{m}} \big\} \Big]^{-1}.
\end{align}
That is,
\begin{align*}
\|\bU\|_{L^2(\mathbb{S}^{n-1},\mathbb{C}^n\times \mathbb{C}^n)}\geq C>0,
\end{align*}
where $C=\Ncal\Big[\exp\exp \big\{  C_3 \,l^{-\frac{(n+5)^2}{m}}\epsilon_0^{-\frac{m+n+5}{m}} \big\} \Big]^{-1}$.

The proof is complete.
\end{proof}

\section*{Acknowledgement}

 The research of Z Bai was partially supported by the National Natural Science Foundation of China under grant 11671337 and the Natural Science Foundation of Fujian Province of China under grant 2021J01033.  The work of H Diao was supported in part by the startup fund from Jilin University.  The work of H Liu was supported by the startup fund from City University of Hong Kong and the Hong Kong RGC General Research Fund (projects 12301420, 11300821 and 12301218), and the NSFC/RGC Joint Research Fund (project N\_CityU101/21).

\end{document}